\documentclass[a4paper]{amsart}
\usepackage{amsmath,amsthm,amssymb}
\usepackage[T1]{fontenc}
\usepackage{url}
\usepackage{amsmath,amsthm,amssymb}
\usepackage{verbatim}
\usepackage{mathrsfs}
\usepackage{graphics}
\usepackage{graphicx}
\usepackage{subfigure}

\newtheorem{theorem}{Theorem}[section]
\newtheorem{proposition}[theorem]{Proposition}
\newtheorem{lemma}[theorem]{Lemma}
\newtheorem{corollary}[theorem]{Corollary}

\theoremstyle{remark}
\newtheorem{remark}[theorem]{Remark}
\newtheorem{definition}{Definition}

\numberwithin{equation}{section}

\def\be{\begin{equation}}
\def\ee{\end{equation}}
\def\bes{\begin{equation*}}
\def\ees{\end{equation*}}

\newcommand{\vep}{\varepsilon}
\newcommand{\R}{{\mathbb{R}}}

\newcommand{\Q}{{\mathbb{Q}}}
\newcommand{\C}{{\mathbb{C}}}
\newcommand{\Z}{{\mathbb{Z}}}
\newcommand{\N}{{\mathbb{N}}}

\newcommand{\xbm}{(X,\mathcal{B},\mu)}

\newcommand{\Int}{\operatorname{Int}}

\newcommand{\Aff}{\operatorname{Aff}}
\newcommand{\hol}{\operatorname{hol}}

\newenvironment{proofof}[2]{\begin{proof}[Proof of #1 \ref{#2}.]}{\end{proof}}

\setlength{\marginparwidth}{1.2in}
\let\oldmarginpar\marginpar
\renewcommand\marginpar[1]{\-\oldmarginpar[\raggedleft\footnotesize #1]%
{\raggedright\footnotesize #1}}

\begin{document}
\title
{Non-ergodic $\Z$-periodic billiards and infinite  translation surfaces}
\author[K. Fr\k{a}czek \and C. Ulcigrai]{Krzysztof Fr\k{a}czek \and Corinna Ulcigrai}

\address{Faculty of Mathematics and Computer Science, Nicolaus
Copernicus University, ul. Chopina 12/18, 87-100 Toru\'n, Poland}
 \email{fraczek@mat.umk.pl}
\address{Department of Mathematics\\
University Walk, Clifton\\
Bristol BS8 1TW, United Kingdom}
\email{corinna.ulcigrai@bristol.ac.uk}
\date{}

\subjclass[2000]{ 37A40, 37C40}  \keywords{}
\maketitle
\begin{abstract}
We give a criterion which allows to prove non-ergodicity  for
certain infinite periodic billiards and directional flows on
$\mathbb{Z}$-periodic translation surfaces. Our criterion applies
in particular to a billiard in an infinite band with periodically
spaced vertical barriers and to the Ehrenfest wind-tree model,
which is a planar billiard with a $\mathbb{Z}^2$-periodic array of
rectangular obstacles. We prove that, in these two examples, both
for a full measure set of parameters of the billiard tables and
for tables with rational parameters, for almost every direction
the corresponding billiard flow is not ergodic and has uncountably
many ergodic components. As another application, we show that for
any recurrent $\mathbb{Z}$-cover of a square tiled surface of
genus two the directional flow is not ergodic and has no invariant
sets of finite measure for a full measure set of directions.
In the language of essential values, we prove that the
skew-products  which arise as Poincar{\'e} maps of the above
systems are associated to non-regular $\mathbb{Z}$-valued cocycles
for interval exchange transformations.

\end{abstract}

\section{Introduction and main results}\label{intro:sec}

The ergodic theory of directional flows 
 on \emph{compact} translation surfaces
(definitions are recalled below) has been a rich and vibrant area
of research in the last decades, in connection with the study of
rational billiards, interval exchange transformations and
Teichm\"uller geodesic flows (see for example the surveys
\cite{Ma, ViB, YoLN, ZoFlat}). On the other hand, very little is
known about the ergodic properties  of directional flows on
\emph{non-compact} translation surfaces, for which the natural
invariant measure is \emph{infinite} (see \cite{Gu}).

A natural motivation to study infinite translation surfaces, as in
the case of compact ones, come  from billiards. As linear flows on
compact translation surfaces arise for example by \emph{unfolding}
billiard flows in rational polygons, examples of flows on
\emph{infinite} translation surfaces can be obtained by unfolding
periodic rational billiards, for example in a band  (see the
billiard described below, Figure \ref{fig_bil} and
\S\ref{examples}) or in the plane (as the Ehrenfest wind-tree
model, see Figure \ref{Ehrenfestplane} and \S\ref{Ehrenfest:sec}).
The infinite translation surfaces obtained in this way are rich in
symmetry, and turns out to be $\mathbb{Z}^d$-covers (see below for
a definition) of compact translation surfaces. Poincar{\'e} maps
of directional flows on compact surfaces are piecewise isometries
known as interval exchange transformations;  Poincar{\'e} maps of
directional flows $\mathbb{\Z}^d$ covers are
$\mathbb{Z}^d$-extensions of  interval exchange transformations
(see \S \ref{Zcovers:sec} for the definitions of interval exchange
transformations and extensions).


The ergodic  properties of directional flows on
$\mathbb{Z}^d$-covers and  more  generally of
$\mathbb{Z}^d$-extensions of interval exchange transformations
have been recently a very active area of research (see for example
\cite{Co-Fr, Co-Gu, Gu, Ho1,  Hu-We1, Ho-We, Hu-Le-Tr, Hu-We2}).
Recall that a measurable flow $(\varphi_t)_{t\in\R}$ on the
measurable space $(X,\mathcal{B})$ preserves the measure $\mu $
(where $\mu$ is $\sigma$-finite)  if $\mu( \varphi_t A )= \mu(A) $
for all $t\in \R$, $A \in \mathcal{B}$. The invariant measure
$\mu$  is \emph{ergodic} and we say that $(\varphi_t)_{t\in\R}$ is
ergodic with respect to $\mu$ if for any measurable set $A$ which
is almost invariant, i.e.~such that $\mu( \varphi_t A\triangle
A)=0 $ for all $t\in \mathbb{R}$, either $\mu(A)=0$ or
$\mu(A^c)=0$, where $A^c$ denotes the complement. In the classical
set-up, a celebrated result
by Kerchoff-Masur-Smillie \cite{Ke-Ma-Sm} states that
for \emph{every} compact connected translation surface for
a.e. direction $\theta\in S^1$
the directional flow in direction $\theta$ is \emph{ergodic}  with
respect to the Lebesgue measure  and moreover is \emph{uniquely
ergodic}, i.e.~the Lebesgue measure is the unique ergodic
invariant measure up to scaling.
Some recent results
concerning ergodicity  are in the direction of proving that also
for some $\mathbb{Z}$-covers  ergodicity holds for a full measure
set of directions,   for example in special cases as
$\mathbb{Z}$-covers of  surfaces of genus $1$ (see \cite{Hu-We1})
or of $\mathbb{Z}$-covers which have the lattice property  (see
Theorem \ref{HW}  quoted below, from \cite{Hu-We2}).
Examples of ergodic directions in some infinite translation
surfaces  were also constructed by Hooper \cite{Ho1}).

In contrast, in this paper we give a criterion
(Theorem~\ref{non-ergodicitycriterion}) which allows to show
non-ergodicity for some infinite billiards and $\Z$-covers of
translation surfaces. Our criterion allows us in particular  to
prove that some well-studied infinite periodic billiards, for
example the billiard in a band with barriers and the periodic
Erhenfest-wind tree model are not ergodic both for a full measure
set of parameters and for certain specific values of parameters
(Theorems \ref{stripbilliard} and \ref{Ehrenfestthm}).  Moreover,
we  show that such flows admits uncountably many ergodic
components (defined in \S\ref{Mackey}). The criterion for
non-ergodicity (Theorem~\ref{non-ergodicitycriterion}) requires
several preliminary definitions and it is therefore stated in \S
\ref{nonergodicity:sec}. Here below (\S\S \ref{examples} and
\ref{Ehrenfest:sec}) we formulate the two results just mentioned
about infinite billiards (Theorems \ref{stripbilliard} and
\ref{Ehrenfestthm}), that are based on this criterion. Another
application of the non-ergodicity criterion is given by Theorem
\ref{maintheorem}, which gives a class of $\mathbb{Z}$-covers of
translation surfaces  for which the set of ergodic directions
$\theta $ for the directional flow $({\varphi}^\theta_t)_{t\in\R}$
has \emph{measure zero}  (see \S\ref{staircases:sec}, where we
state Theorem \ref{maintheorem} after the preliminary definitions
in \S\ref{defnmain:sec} and comment on the relations with other
recent results).

Let us remark that our Theorems can be rephrased in the language
of  skew-products and essential values (as explained in \S
\ref{Zcovers:sec} and \S \ref{essentialvalues:sec} below). While
skew-products over rotations are well studied,
very  few results were previously known for skew-products over
IETs. The first return (Poincar{\'e}) maps of the billiard flows
or of the directional flows considered provide examples of
skew-products associated to \emph{non-regular} cocycles for
interval exchange transformations (see \S\ref{essentialvalues:sec}
for the definition of non-regularity).

\subsection{A billiard in an infinite band}\label{examples}
Let us consider the infinite band $\R\times [0,1]$ with
periodically placed linear barriers (also called \emph{slits})
handling from the lower side of the band perpendicularly (see
Figure \ref{fig_bil}). We will denote by $T(l)=(\R\times
[0,1])\setminus (\Z\times[0,l])$ the billiard table in which the
length of the slit is given by the parameter $0<l<1$ as shown in
Figure \ref{fig_bil}. Let us recall that a billiard trajectory is
the trajectory of a point-mass which moves freely inside $T(l)$ on
segments of straight lines and undergoes elastic collisions (angle
of incidence equals to the angle of reflection) when it hits the
boundary of $T(l)$. An example of a billiard trajectory is drawn
in Figure \ref{fig_bil}.
\begin{figure}[h]
\includegraphics[width=0.6\textwidth]{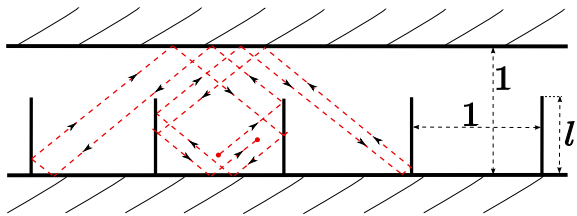}
\caption{Billiard flow on $T(l)$.}\label{fig_bil}
\end{figure}
The billiard  flow $(b_t)_{t \in \R}$ is defined on a full measure
set of points in the phase space $T^1(l)$, that consists of the
subset of points $(x, \theta) \in T(l) \times S^1$ such that if
$x$ belongs to the boundary of $T(l)$ then  $\theta$ is an inward
direction. For $t \in \R$ and  $(x,\theta )$ in the domain of
$(b_t)_{t\in\R}$,  $b_t$ maps $ (x, \theta)$ to $b_t(x,\theta) =
(x',\theta')$, where $x'$ is the point reached after time $t$ by
flowing at unit speed along the billiard trajectory starting at
$x$ in direction $\theta$ and $\theta'$ is the tangent direction
to the trajectory at $x'$.


The infinite billiard $(b_t)_{t\in\R}$ is an extension of a finite billiard (in a
rectangle with a barrier), whose fine dynamical properties were
studied in many papers (see \cite{Veech2,  Ch, Ch-Hu-Ma, EMS}).
Let us also remark that a similar billiard in a  semi-infinite
band is known as a \emph{retroreflector} and was studied in
\cite{BKM}.

Since the directions of any billiard trajectory in $T(l)$ are at
most four, the set $T(l)\times \Gamma\theta$, where
$\Gamma\theta:=\{\theta, -\theta, \pi-\theta, \pi + \theta\}$, is
an invariant subset in the phase space $T^1(l) $ for the billiard
flow on $T(l)$. The flow $(b^{\theta}_t)_{t\in\R}$ will denote the
restriction of $(b_t)_{t\in \R}$ to this invariant set. Remark
that  the directional billiard flow $(b^{\theta}_t)_{t\in\R}$
preserves the product of the Lebesgue measure on $T(l)$ and the
counting measure on the orbit $\Gamma\theta$. We say that
$(b_t^\theta)_{t\in\R}$ on $T(l)$ is \emph{ergodic} if it is
ergodic with respect to this natural invariant measure.

\begin{theorem}\label{stripbilliard}
Consider the billiard flow  $(b_t)_{t\in\R}$ on the infinite strip
$T(l)$. There exists a set $\Lambda \subset [0,1]$ of full
Lebesgue measure such that, if either:
\begin{itemize}
\item[(1)] $l$ is a rational number, or
\item[(2)] $l \in \Lambda$, 
\end{itemize}
then for almost every $\theta\in S^1$ the directional billiard
flow $(b^\theta_t)_{t\in\R}$ on $T(l)$ is recurrent and not
ergodic.  Moreover, $(b^\theta_t)_{t\in\R}$ has uncountably many
ergodic components.
\end{theorem}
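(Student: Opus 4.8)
The plan is to deduce Theorem \ref{stripbilliard} from the general non-ergodicity criterion (Theorem \ref{non-ergodicitycriterion}) by unfolding the billiard flow on $T(l)$ into a directional flow on an infinite translation surface, which turns out to be a $\Z$-cover of a compact one. First I would carry out the standard unfolding construction: reflecting the band $\R\times[0,1]$ across its two horizontal sides and gluing the four copies appropriately, one obtains a translation surface $M(l)$, and the billiard flow $(b^\theta_t)_{t\in\R}$ becomes the directional flow $(\varphi^\theta_t)_{t\in\R}$ on $M(l)$ in a fixed direction. Because the barriers $\Z\times[0,l]$ are placed $\Z$-periodically, the group $\Z$ acts on $M(l)$ by horizontal translation by one unit, and the quotient $M(l)/\Z =: \overline M(l)$ is a \emph{compact} translation surface; thus $M(l) \to \overline M(l)$ is a $\Z$-cover, and the Poincaré map of the directional flow is a $\Z$-extension of an interval exchange transformation over the corresponding IET on $\overline M(l)$. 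I would identify $\overline M(l)$ concretely: for rational $l=p/q$ it is square-tiled (a torus cover branched over finitely many points), and one checks its genus and the stratum it lies in; for general $l$ it is a genus-two surface obtained from two slit tori.

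Next I would verify the hypotheses of Theorem \ref{non-ergodicitycriterion} for the cocycle defining this $\Z$-extension. The criterion (whose precise hypotheses are stated in \S\ref{nonergodicity:sec}) will require, roughly, that for a full-measure set of directions the base IET is sufficiently well approximated by periodic combinatorics (e.g.\ satisfies a Roth-type Diophantine condition or the relevant renormalization estimates) and that the cocycle, while having zero mean, is \emph{not} a coboundary and displays the oscillation needed to force non-ergodicity of the skew product; one then concludes non-regularity of the cocycle and uncountably many ergodic components. The mean-zero condition holds here because of the symmetry of the slit configuration (the cocycle counting net horizontal displacement is manifestly balanced), and the non-triviality comes from the fact that the slit has length $0<l<1$, so the cocycle is genuinely non-constant on the relevant subintervals. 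The set $\Lambda$ of full measure in case (2) is exactly the set of $l$ for which the Diophantine/renormalization hypothesis of the criterion is met for a.e.\ direction; that this set has full Lebesgue measure follows from the measure-theoretic genericity built into the criterion (a parameter-exclusion argument along the lines of the Kerckhoff–Masur–Smillie machinery), while for rational $l$ the square-tiled (hence Veech) structure of $\overline M(l)$ lets one verify the hypothesis directly for a.e.\ direction using the lattice property.

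Recurrence of $(b^\theta_t)_{t\in\R}$ for a.e.\ $\theta$ I would obtain separately from the Diophantine condition on the base IET together with the mean-zero property of the cocycle: by the Atkinson-type argument (conservativity of mean-zero cocycles over ergodic bases), $\Z$-extensions of uniquely ergodic IETs by integrable mean-zero cocycles are recurrent, and a.e.\ direction on $\overline M(l)$ is uniquely ergodic by Kerckhoff–Masur–Smillie. Finally, non-ergodicity plus the existence of uncountably many ergodic components is precisely the conclusion of Theorem \ref{non-ergodicitycriterion} once its hypotheses are checked, so the only real work is the verification of those hypotheses and the identification of $\overline M(l)$. I expect the main obstacle to be exactly this verification of the criterion's hypotheses for a \emph{full measure} set of parameters $l$: one must control, uniformly in a.e.\ direction, the renormalization dynamics of the family of IETs arising as $l$ varies, and show that the "bad" set of $l$ (where the oscillation estimate or the Diophantine condition fails for a positive-measure set of directions) is Lebesgue-null. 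The rational case is comparatively easier because the Veech group of the square-tiled surface $\overline M(l)$ gives explicit renormalization, but even there one must rule out the countably many directions where the criterion could fail and check that the cocycle is not cohomologous to a constant on any finite-index subsystem.
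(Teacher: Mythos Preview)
Your unfolding is correct: the billiard on $T(l)$ unfolds to a directional flow on a $\Z$-cover $(\widetilde{M},\widetilde{\omega}_l)$ of a compact genus-two surface $(M,\omega_l)\in\mathcal{H}(1,1)$, and for rational $l$ this base is square-tiled. But your description of what Theorem~\ref{non-ergodicitycriterion} says and how it is verified is essentially wrong, and this is a genuine gap, not a stylistic difference.

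The criterion has nothing to do with Roth-type Diophantine conditions, oscillation estimates, or renormalization of IETs. Its hypotheses are: an $SL(2,\R)$-invariant ergodic probability measure $\mu$ on moduli space, and a \emph{constant rational orthogonal invariant splitting} $H_1(M_\omega,\Q)=K_1\oplus K_1^\perp$ with $\dim_\Q K_1=2$ and both Lyapunov exponents of the Kontsevich--Zorich cocycle on $\R\otimes_\Q K_1$ nonzero. The mechanism is that the stable space $E^-_\omega$ meets $K^1=\mathcal{P}(\R\otimes_\Q K_1)$ in a line, a form $\rho$ representing that line gives a coboundary correction (Theorem~\ref{cohthm}), and the periods of $\rho$ along a $\Z$-basis of $K_1$ are forced to be rationally independent (Lemma~\ref{irrationality}); this pushes the $\Z$-valued cocycle $\psi_\gamma$ into cohomology with an $\alpha\Z$-valued cocycle for irrational $\alpha$, so $E_\Z(\psi_\gamma)\subset\Z\cap\alpha\Z=\{0\}$. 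None of ``mean-zero'', ``oscillation'', or ``Diophantine approximation of the IET'' enters.

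What you are missing for case (2) is the identification of $K_1$. The surface $(M,\omega_l)$ is a branched double cover of a slit torus, with deck involution $\tau$; one sets $K_1=\{\gamma:\tau_*\gamma=-\gamma\}$, $K_1^\perp=\{\gamma:\tau_*\gamma=\gamma\}$, checks $\dim_\Q K_1=2$ and that the cover class $[V-U]$ lies in $K_1$. The relevant measure $\mu_{\mathscr L}$ is the pullback of the canonical measure on $\mathcal{H}^{(1)}(0,0)$ to the locus $\mathscr{L}$ of such double covers; KZ-hyperbolicity in genus two (Bainbridge) gives the nonzero exponents on $\R\otimes_\Q K_1$. The passage from ``$\mu_{\mathscr L}$-a.e.\ $\omega$'' to ``Lebesgue-a.e.\ $l$ and a.e.\ $\theta$'' is not a parameter-exclusion argument of KMS type but a concrete Fubini computation in period coordinates, using that the Poincar\'e skew product depends only on $\Re\omega$ and that $(l,\theta,t,\underline{y})\mapsto g_t\rho_{\pi/2-\theta}\omega_l$ extended by the $\underline{y}$-coordinates is a local diffeomorphism onto $\mathscr{L}$. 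For case (1), the square-tiled structure gives the constant splitting $H_1^{(0)}\oplus H_1^{st}$ directly, and the Veech Fubini argument (Proposition~\ref{Fubinilattice}) finishes. Your sketch does not supply the splitting, the Lyapunov-exponent input, or the period-coordinate Fubini, and the ``Roth/oscillation'' route you outline does not correspond to any argument in the paper.
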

Let us remark that, even though we prove that the result holds for
a full measure set of parameters  $\Lambda$, the assumption $(1)$
is more precise since it gives \emph{concrete} values of the
parameters for which the conclusion holds. It is natural to ask if
there  exists exceptional directions $\theta \in S^1$ and $l \in
(0,1)$ for which the flow $(b^\theta_t)_{t\in\R}$ is ergodic. In
\cite{Fr-Ul:erg_bil}  it is shown that for all $l=p/q \in
\mathbb{Q}\cap(0,1)$ there exists a positive Hausdorff dimension
set of exceptional directions $\theta \in S^1$ for which
$(b^\theta_t)_{t\in\R}$ on $T(p/q)$ is ergodic.

\subsection{The Ehrenfest wind-tree model}\label{Ehrenfest:sec}
The Ehrenfest wind-tree billiard is a model of a gas particle
introduced in 1912 by P. and T. Ehrenfest. The periodic version,
which was first studied by Hardy and Weber in \cite{Ha-We},
consist of a $\mathbb{Z}^2$-periodic planar array of rectangular
scatterers, whose sides are given by two parameters  $0<a,b<1$
(see Figure \ref{Ehrenfestplane}). The billiard flow  in  the
complement $E_{2}(a,b)$ of the interior of the rectangles is the
Ehrenfest wind-tree billiard, that we will denote by
$(e_t)_{t\in\R}$. An example of a billiard trajectory is also
shown in Figure \ref{Ehrenfestplane}.  Many results on the
dynamics of the periodic wind-tree models, in particular on
recurrence and diffusion times, were proved recently, see
\cite{Co-Gu, Hu-Le-Tr, Trou, Del, DHL}.
\begin{figure}[h]
\includegraphics[width=0.6\textwidth]{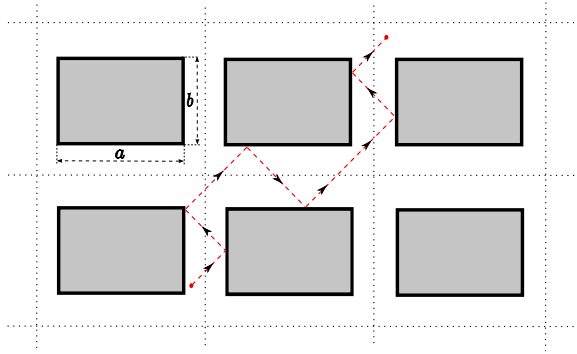}
\caption{Ehrenfest wind-tree billiard on $E_{2}(a,b)$.}\label{Ehrenfestplane}
\end{figure}
\begin{figure}[h]
\includegraphics[width=0.6\textwidth]{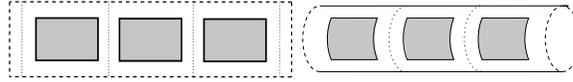}
\caption{Ehrenfest wind-tree billiard on $E_{1}(a,b)$.}\label{Ehrenfesttube}
\end{figure}
One can also consider a one-dimensional version of the periodic
Ehrenfest  wind-tree model, whose configuration space $E_1(a,b)$
is an infinite tube $\mathbb{R}\times (\R/\Z)$ with
$\mathbb{Z}$-periodic rectangular scatterers  (see Figure
\ref{Ehrenfesttube}) of horizontal and vertical sides of lengths
$a$  and $b$ respectively. We will also denote by $(e_t)_{t\in\R}$
the billiard flow  in $E_1(a,b)$.  As for the billiard in a strip
in \S\ref{examples}, any trajectory of $(x,\theta)$ for
$(e_t)_{t\in\R}$ in $E_1(a,b)$ or in $E_2(a,b)$ travels in at most
four directions, belonging to the set $\Gamma \theta :=\{ \pm
\theta, \theta \pm \pi\}$. The restriction of $(e_t)_{t\in \R}$ to
the invariant set $E_i(a,b)\times\Gamma\theta$ for $i=1,2$ will be
denoted by $(e_t^\theta)_{t\in\R}$. The directional billiard flow
$(e_t^\theta)_{t\in\R}$ preserves the product measure $\mu$ of the
Lebesgue measure on $E_1(a,b)$ ($E_2(a,b)$) and the counting
measure on $\Gamma\theta$ and the ergodicity of
$(e_t^\theta)_{t\in\R}$ refers to ergodicity with respect to this
measure $\mu$.

\begin{theorem}\label{Ehrenfestthm}
Consider the billiard flow $(e_t)_{t\in\R}$ in the
$\mathbb{Z}$-periodic  Ehrenfest wind-tree model $E_1(a,b)$. There
exists a set $\mathcal{P} \subset [0,1]^2$ of full Lebesgue
measure such that, if either:
\begin{itemize}
\item[(1)] $a,b\in(0,1)$ are rational numbers, or
\item[(2)]  $a,b\in(0,1)$ can be written as $1/(1-a)=x+y\sqrt{D}$,
$1/(1-b)=(1-x)+y\sqrt{D}$  with $x,y\in \mathbb{Q}$ and $D$ a
positive square-free integer, or
\item[(3)] $(a,b) \in \mathcal{P}$, 
\end{itemize}
then for almost every $\theta\in S^1$ the directional billiard
flow $(e^\theta_t)_{t\in\R}$ on $E_1(a,b)$ is recurrent and not
ergodic. Moreover,  $(e^\theta_t)_{t\in\R}$ has uncountably many
ergodic components.
\end{theorem}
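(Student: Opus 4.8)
The plan is to reduce the statement to the non-ergodicity criterion (Theorem~\ref{non-ergodicitycriterion}) by unfolding the billiard flow $(e^\theta_t)_{t\in\R}$ on $E_1(a,b)$ to a directional flow on a $\mathbb{Z}$-cover of a compact translation surface. First I would carry out the standard unfolding: since every billiard trajectory in $E_1(a,b)$ uses only the four directions $\Gamma\theta=\{\pm\theta,\theta\pm\pi\}$, reflecting the tube $\R\times(\R/\Z)$ in its horizontal sides and in the sides of the scatterers produces, after identifying the four copies appropriately, an infinite translation surface $\widetilde{M}(a,b)$ on which $(e^\theta_t)_{t\in\R}$ becomes the directional flow in direction $\theta$, with the natural invariant measure corresponding to the product of Lebesgue and counting measure. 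The $\mathbb{Z}$-periodicity of the scatterers along the tube makes $\widetilde{M}(a,b)$ a $\mathbb{Z}$-cover of a compact translation surface $M(a,b)$, obtained by unfolding one fundamental domain (a torus with a rectangular obstacle, doubled); one computes that $M(a,b)$ lies in a stratum of genus two. The key point here is to identify $M(a,b)$ with an explicit square-tiled (or Veech) surface: under the change of coordinates suggested by the hypotheses, namely setting the relevant lengths in terms of $1/(1-a)$ and $1/(1-b)$, the surface $M(a,b)$ becomes a translation cover of a surface in the Teichmüller disk of an $L$-shaped or double-pentagon Veech surface. Case (1) ($a,b$ rational) then corresponds to $M(a,b)$ being square-tiled, and case (2) (the $x+y\sqrt{D}$ condition) corresponds to $M(a,b)$ lying on a Veech surface in an appropriate $\SL(2,\R)$-orbit.

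Next I would verify the hypotheses of the non-ergodicity criterion. This requires checking: (i) that the cocycle defining the $\mathbb{Z}$-extension (the "which copy of the tube you are in" coordinate) is given by integration of a $1$-form that is not exact and has the correct cohomological properties — concretely, that the relevant period/flux is nonzero so the cover is genuinely infinite and the cocycle is nontrivial; (ii) recurrence of $(e^\theta_t)_{t\in\R}$ for a.e.\ $\theta$, which follows either from the general recurrence results for $\mathbb{Z}$-covers with zero Lyapunov exponent (the cocycle here has mean zero by symmetry of the tube) combined with the ergodic-theoretic input, or is assumed as part of the hypotheses of Theorem~\ref{non-ergodicitycriterion}; and (iii) the genus-two / Veech structural condition, which is exactly what cases (1) and (2) supply, and which in case (3) holds for a full-measure set $\mathcal{P}$ of parameters by a Fubini argument over the moduli space (a.e.\ surface in the relevant locus has the needed property). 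Once these are in place, Theorem~\ref{non-ergodicitycriterion} directly yields that for a.e.\ $\theta$ the flow is not ergodic; the uncountably many ergodic components then come from the Mackey range / essential-values analysis in \S\ref{Mackey}, as the cocycle is non-regular and the essential value group is a proper subgroup, forcing a continuum of ergodic components.

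The main obstacle I anticipate is step (i)–(ii) made precise: one must pin down the cohomology class of the cocycle on the specific surface $M(a,b)$ and check it satisfies the precise non-degeneracy hypothesis of Theorem~\ref{non-ergodicitycriterion} (for instance, that it is not a coboundary plus a multiple of a "trivial" class, so that the criterion's conclusion of non-regularity applies). This is a concrete but delicate computation in the homology of the genus-two surface, tracking how the holonomy of the $\mathbb{Z}$-translation interacts with the cylinder decomposition. The second genuine difficulty is case (3): identifying the full-measure set $\mathcal{P}$ requires knowing that the non-ergodicity criterion applies along a positive-measure (indeed full-measure) family of deformations of the square-tiled examples, which in turn rests on the fact that the hypothesis of Theorem~\ref{non-ergodicitycriterion} is an open/full-measure condition in the relevant parameter space — I would reduce this to the corresponding statement already built into the criterion or to the ergodicity of the Teichmüller flow on the ambient stratum. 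The rest — the unfolding, the genus count, and the passage from non-ergodicity to uncountably many ergodic components — is routine given the machinery assumed above.
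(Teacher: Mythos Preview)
Your overall strategy --- unfold, identify the $\Z$-cover, then apply Theorem~\ref{non-ergodicitycriterion} --- is correct, but there is a genuine error in the identification of the compact base surface that undermines the rest of the plan. You assert that ``$M(a,b)$ lies in a stratum of genus two.'' It does not. Unfolding one fundamental domain of $E_1(a,b)$ with the Klein four-group $\Gamma=\langle\tau^h,\tau^v\rangle$ produces a surface $(M,\omega_{a,b})\in\mathcal{H}(2,2,2,2)$ of genus \emph{five}. What is true is that $(M,\omega_{a,b})$ is a $\Gamma$-cover of a genus-two surface $(N,\nu_{a,b})\in\mathcal{H}(2)$, and it is the orbit closure of this genus-two quotient (via McMullen's classification) that governs cases (1) and (2). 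Because of this, you cannot invoke the ``genus-two / Veech structural condition'' directly on $M(a,b)$ the way Theorem~\ref{maintheorem} does.

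The consequence is that your step~(iii) collapses: on a genus-five surface $H_1(M,\Q)$ is ten-dimensional, and Theorem~\ref{non-ergodicitycriterion} requires you to exhibit a \emph{two-dimensional} invariant rational subspace $K_1$ containing the class $\sigma$ defining the $\Z$-cover, together with a complementary $K_1^\perp$, such that the splitting is constant on the relevant locus and the KZ-exponents on $K_1$ are nonzero. The paper obtains this from the $\Gamma$-action on homology: one decomposes $H_1(M,\Q)=E^{++}\oplus E^{+-}\oplus E^{-+}\oplus E^{--}$ into isotypic pieces for $(\tau^v_*,\tau^h_*)$, checks that $\sigma\in E^{-+}$ and $\dim_\Q E^{-+}=2$, and takes $K_1=E^{-+}$. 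The nonvanishing of the Lyapunov exponents on $E^{-+}$ is not automatic (KZ-hyperbolicity of the full bundle does not imply it for a subbundle) and is taken from the explicit computations in \cite{DHL}. None of this appears in your outline, and without it Theorem~\ref{non-ergodicitycriterion} cannot be applied. Your treatment of case~(3) is also too optimistic: the Fubini argument is not merely that the criterion's hypothesis is ``open''; one must parametrize a neighbourhood in $\mathscr{L}$ by period coordinates, observe that the skew-product representation depends only on the real parts, and then argue via an explicit diffeomorphism $\Upsilon$ as in \S\ref{Ehrenfest:finalsec}.
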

As in Theorem \ref{stripbilliard}, the result holds by $(3)$ for the
full measure set of parameters $\mathcal{P}$, but only the assumptions
$(1)$ and $(2)$ give \emph{concrete} values of the parameters $(a,b)$ for
which the conclusion holds.

As a corollary, since $(e_t^\theta)_{t\in\R}$ in $E_2(a,b)$ is  a
cover of $(e_t^\theta)_{t\in\R}$  on $E_1(a,b)$, we have the
following conclusion about the original Ehrenfest periodic model.
\begin{corollary}\label{Ehrenfestcor}
If $(a,b)$ satisfy either $(1), (2)$ or $(3)$ in Theorem
\ref{Ehrenfestthm},  then for almost every $\theta\in S^1$ the
planar periodic Ehrenfest wind tree model $(e_t^\theta)_{t\in\R}$
on $E_2(a,b)$ is  not ergodic and moreover, there are uncountably
many ergodic components.
\end{corollary}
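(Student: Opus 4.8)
The plan is to deduce the corollary from Theorem \ref{Ehrenfestthm} by exploiting the fact that the planar wind-tree table $E_2(a,b)$ covers the tube model $E_1(a,b)$. Concretely, the tube $E_1(a,b) = \R \times (\R/\Z)$ with its $\Z$-periodic obstacles is obtained from the plane $E_2(a,b)$ by quotienting out the vertical translation $(x,y)\mapsto(x,y+1)$; equivalently, $E_2(a,b) \to E_1(a,b)$ is a $\Z$-cover, with the deck group acting by vertical unit translations. Since this covering map commutes with straight-line motion and with reflections in horizontal and vertical sides, it intertwines the billiard flows: if $p\colon E_2(a,b)\to E_1(a,b)$ denotes the covering projection, then $p\circ e_t = e_t \circ p$ on the respective phase spaces, and in particular it sends directional flow $(e_t^\theta)_{t\in\R}$ on $E_2(a,b)$ to $(e_t^\theta)_{t\in\R}$ on $E_1(a,b)$ for every $\theta$. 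The natural invariant measure on $E_2(a,b)\times\Gamma\theta$ pushes forward to the natural invariant measure on $E_1(a,b)\times\Gamma\theta$.

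The first step is to record that recurrence and non-ergodicity pass upward through such a factor map. For non-ergodicity this is immediate: if $A\subset E_1(a,b)\times\Gamma\theta$ is a measurable invariant set with $0<\mu(A)<\infty$ and positive-measure complement, witnessing non-ergodicity as supplied by Theorem \ref{Ehrenfestthm} for a.e.\ $\theta$ (whenever $(a,b)$ satisfies $(1)$, $(2)$ or $(3)$), then $p^{-1}(A)$ is an invariant set for $(e_t^\theta)_{t\in\R}$ on $E_2(a,b)$; it has positive measure and positive-measure complement, so the planar flow is not ergodic in direction $\theta$. Thus for the same full-measure set of $\theta$ for which Theorem \ref{Ehrenfestthm} gives non-ergodicity on $E_1(a,b)$, the planar flow on $E_2(a,b)$ is non-ergodic. (Recurrence of the planar flow, if desired, follows from recurrence of the tube flow together with recurrence of the extra $\Z$-coordinate, but the statement of the corollary only asserts non-ergodicity and uncountably many ergodic components.)

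The second step is to upgrade "non-ergodic" to "uncountably many ergodic components." Here I would argue that each ergodic component of the tube flow lifts to at least one ergodic component of the planar flow, so the planar flow has at least as many ergodic components as the tube flow, which is uncountably many by Theorem \ref{Ehrenfestthm}. More precisely, writing the ergodic decomposition of $\mu$ on $E_1(a,b)\times\Gamma\theta$ and pulling it back by $p$, distinct components (which are carried by disjoint invariant sets downstairs) have disjoint preimages upstairs, and each such preimage, being invariant of positive measure, contains at least one ergodic component of the planar flow; this produces an injection from the (uncountable) set of ergodic components of the tube flow into the set of ergodic components of the planar flow. Hence $(e_t^\theta)_{t\in\R}$ on $E_2(a,b)$ has uncountably many ergodic components for a.e.\ $\theta$.

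I expect the main point requiring care — rather than a genuine obstacle — to be the verification that $p$ is measure-theoretically well behaved enough that the ergodic-decomposition argument in the second step is legitimate: one should check that $p$ maps invariant sets to invariant sets and pushes the natural infinite measure on $E_2(a,b)$ forward (up to the $\Z$-factor) to the natural measure on $E_1(a,b)$, so that null sets and positive-measure sets correspond correctly. This is routine given that both measures are built from Lebesgue measure times counting measure and $p$ is a local isometry respecting the obstacle structure, but it is the only place where something beyond the formal factor-map relation $p\circ e_t^\theta = e_t^\theta\circ p$ is used.
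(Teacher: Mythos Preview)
Your proposal is correct and follows essentially the same approach as the paper: both use the fact that $(e_t^\theta)_{t\in\R}$ on $E_1(a,b)$ is a factor of $(e_t^\theta)_{t\in\R}$ on $E_2(a,b)$ via the vertical quotient map $\pi(x,y)=(x,y+\Z)$, and then observe that non-ergodicity and the presence of uncountably many ergodic components pass from a factor to an extension. The paper's proof is considerably more terse---it simply asserts these lifting facts without the detailed pullback and injection arguments you spell out---so your version is a faithful elaboration of the same idea.
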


\subsection{Directional flows on translation surfaces and $\Z$-covers}\label{defnmain:sec}
We now recall some basic definitions to then state  (in \S
\ref{staircases:sec}) another application of our non-ergodicity
criterion (Theorem~\ref{non-ergodicitycriterion}) for a class of
$\mathbb{Z}$-covers of translation surfaces. Let $M$ be an
oriented surface  (not necessarily compact). A translation surface
$(M,\omega)$ is a complex structure on $M$ together with an
nonzero \emph{Abelian differential} $\omega$, that is a non-zero
holomorphic $1$-form. Since given an Abelian differential  there
exists an uniquely complex structure which is compatible with it,
the translation structure on $M$ is uniquely defined by $\omega$.
Let $\Sigma=\Sigma_\omega\subset M$ be the set of zeros of
$\omega$. For every $\theta\in S^1 = \R/2\pi \Z $ 
denote by $X_\theta=X^{\omega}_\theta$ the directional vector
field in direction $\theta$ on $M\setminus\Sigma$, defined by
$i_{X_\theta}\omega =e^{i\theta}$. Then the corresponding
directional flow
$(\varphi^{\theta}_t)_{t\in\R}=(\varphi^{\omega,\theta}_t)_{t\in\R}$
(also known as \emph{translation flow}) on $M\setminus\Sigma$
preserves the volume form
$\nu_{\omega}=\frac{i}{2}\omega\wedge\overline{\omega}=\Re(\omega)\wedge\Im(\omega)$.
We will use the notation $(\varphi^{v}_t)_{t\in\R}$ and $X_h$ for
the \emph{vertical flow and vector field} (corresponding to
$\theta = \frac{\pi}{2}$) and $(\varphi^{h}_t)_{t\in\R}$ and $X_h$
for the \emph{horizontal flow and vector field} respectively
($\theta = 0$). We will always consider translation surfaces of
\emph{area one}, that is renormalized so that
$A(\omega):=\nu_\omega (M)$ is equal to one. We will denote by
$M_\theta$ the set of regular points for the directional flow
$(\varphi^{\theta}_t)_{t\in\R}$, i.e. the set of point for which
the orbit of the flow may be defined for all times $t\in\R$. Then
$M_\theta$ is a Borel subset of $M$ with $\nu_\omega(M\setminus
M_\theta)=0$ and $(\varphi^{\theta}_t)_{t\in\R}$ restricted to
$M_\theta$ is a well defined Borel flow.

Let $(M,\omega)$ be a compact connected translation surface.
Recall that a {\em $\Z$-cover} of $M$ is a 
 manifold $\widetilde{M}$ with a free totally discontinuous action of the group $\Z$
such that the quotient manifold $\widetilde{M}/\Z$ is homeomorphic
to $M$. The map $p:\widetilde{M}\to M$ obtained by  composition of
the projection $\widetilde{M}\to\widetilde{M}/\Z$ and the
homeomorphism $\widetilde{M}/\Z\to M$ is called a {\em covering
map}. Denote by $\widetilde{\omega}$ the pullback of the form
$\omega$ by the map $p$. Then $(\widetilde{M},\widetilde{\omega})$
is a translation surface as well. As we recall at the beginning of
Section \ref{Zcovers:sec},  $\Z$-covers of M up to isomorphism are
in one-to-one correspondence with homology classes in $H_1(M,\Z)$.
For every $\gamma\in H_1(M,\Z)$ we will denote by
$(\widetilde{M}_\gamma,\widetilde{\omega}_\gamma)$ the translation
surface associated to the $\Z$-cover given by $\gamma$.

For any $\Z$-cover $(\widetilde{M},\widetilde{\omega})$ of the
translation surface $(M,\omega)$ and $\theta\in S^1$ denote by
$({\varphi}^\theta_t)_{t\in\R}$ and
$(\widetilde{\varphi}^\theta_t)_{t\in\R}$ the volume-preserving
directional flows on $(M,\nu_{\omega})$ and
$(\widetilde{M},\nu_{\widetilde{\omega}})$ respectively.
Recall
that a measure-preserving flow $(\varphi_t)_{t\in\R}$ on
$(X,\mathcal{B},\mu)$ ($\mu$ is $\sigma$-finite) is {\em
recurrent} if  for any $A\in\mathcal{B}$ with $\mu(A)>0$, for a.e.
$x\in A$ there is $t_n \to\infty$ such that $\varphi_{t_n}x\in A$.

Denote by $\hol:H_1(M,\Z)\to \C$ the {\em holonomy map}, i.e.\
$\hol(\gamma)=\int_\gamma\omega$ for every $\gamma\in H_1(M,\Z)$.
As recently shown by Hooper and Weiss (see Proposition~15   in
\cite{Ho-We}) a curve $ \gamma$ on $({M},{\omega})$ has
$\hol(\gamma)=0$ if and only if for every $\theta\in S^1$ such
that $({\varphi}^\theta_t)_{t\in\R}$ is ergodic, the flow
$(\widetilde{\varphi}^\theta_t)_{t\in\R}$ on the $\Z$-cover
$(\widetilde{M}_\gamma,\widetilde{\omega}_\gamma)$  is
\emph{recurrent}. Thus, following Hooper and Weiss, we adopt the
following definition:

\begin{definition}[see \cite{Ho-We}]
The $\Z$-cover $(\widetilde{M}_\gamma,\widetilde{\omega}_\gamma)$
of the translation surface $(M,\omega)$ given by $\gamma \in
H_1(M,\Z)$
 is called {\em recurrent}
if $\hol(\gamma)=0$.
\end{definition}

Recall that a  translation surface $(M,\omega)$ is {\em
square-tiled}   if there exists a ramified cover $p:M\to\R^2/\Z^2$
unramified outside $0\in\R^2/\Z^2$ such that $\omega=p^*(dz)$.
Square tiled surfaces are also known as \emph{origamis}. Examples of square
tiled surface $(M,\omega)$ can be realized by gluing finitely (or
infinitely) many  squares of equal sides in $\mathbb{R}^2$ by
identifying each left vertical side of a square with a right vertical
side of some square and each top horizontal side with a bottom
horizontal side via translations.

\subsection{$\Z$-covers of genus two square tiled surfaces and staircases}\label{staircases:sec}
Another application of the non-ergodicity criterion
(Theorem~\ref{non-ergodicitycriterion}) is the following.
\begin{theorem}\label{maintheorem}
If $(M,\omega)$ is square-tiled translation surface of  genus $2$,
for any recurrent $\Z$-cover
$(\widetilde{M}_\gamma,\widetilde{\omega}_\gamma)$ given by a non
trivial $\gamma \in H_1(M, \mathbb{Z})$ and for a.e. $\theta\in
S^1$ the directional flow
$(\widetilde{\varphi}^\theta_t)_{t\in\R}$ is {not} ergodic.
Moreover, it has no invariant sets of positive measure and has
uncountably many ergodic components.
\end{theorem}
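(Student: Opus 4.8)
\textbf{Proof plan for Theorem \ref{maintheorem}.}

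The plan is to deduce Theorem \ref{maintheorem} from the general non-ergodicity criterion (Theorem~\ref{non-ergodicitycriterion}) by verifying its hypotheses for the Poincar\'e map of the directional flow on a genus-two square-tiled surface. First I would pass to a Poincar\'e section transverse to the directional flow $(\varphi^\theta_t)_{t\in\R}$ on the compact surface $(M,\omega)$, so that the return map becomes an interval exchange transformation $T$, and the return map of $(\widetilde\varphi^\theta_t)_{t\in\R}$ on the recurrent $\Z$-cover $(\widetilde M_\gamma,\widetilde\omega_\gamma)$ becomes a $\Z$-extension $T_\psi(x,n)=(Tx,n+\psi(x))$, where the cocycle $\psi$ records the number of signed crossings of a fixed separating cycle representing $\gamma$. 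Recurrence of the cover, i.e.\ $\hol(\gamma)=0$, will guarantee that $\psi$ has zero average against the (Rauzy) invariant measure, and that the cover flow has no invariant sets of finite measure — these are exactly the hypotheses that the criterion wants in order to conclude that the cocycle is \emph{non-regular}, hence the $\Z$-extension (and therefore the flow on $\widetilde M_\gamma$) is not ergodic, with uncountably many ergodic components.

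The main analytic input is to control the cocycle $\psi$ along the Rauzy--Veech (or Teichm\"uller) renormalization. In genus two the Rauzy classes, the structure of the stratum ($\mathcal H(2)$ or $\mathcal H(1,1)$), and the KZ/Rauzy cocycle are completely understood; in particular, for almost every $\theta$ the IET $T$ is of Keane type and uniquely ergodic (Kerckhoff--Masur--Smillie), and the renormalization dynamics is recurrent to a compact set of the Teichm\"uller flow. The key step is to show that, for a.e.\ $\theta$, one can find an infinite sequence of renormalization times $n_k\to\infty$ at which the Birkhoff sums of $\psi$ over the dynamically partitioned towers exhibit \emph{oscillation}: on a definite proportion of the base of some tower the partial sums of $\psi$ stay bounded (giving a candidate for an essential value $0$, or rather controlled values), while on another tower they are forced to be large, of a definite sign, by the holonomy cancellation structure of a separating curve in genus two. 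This dichotomy, fed into Theorem~\ref{non-ergodicitycriterion}, produces the non-regularity. The genus-two hypothesis is used precisely here: separating curves in genus two have a rigid homological structure (the surface decomposes as two one-handled pieces glued along $\gamma$), which constrains $\psi$ enough to run the argument; in particular the square-tiled assumption makes $\psi$ integer-valued and piecewise constant with controllable combinatorics.

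Concretely, the steps in order would be: (i) choose the transversal and write the Poincar\'e map as an IET with an integer cocycle $\psi_\gamma$ depending on $\gamma$; (ii) recall from the theory of genus-two square-tiled surfaces the structure of the relevant Rauzy class and verify that for a.e.\ $\theta$ the renormalization orbit returns infinitely often to a fixed compact piece where all continuity intervals have comparable length; (iii) use $\hol(\gamma)=0$ to show the mean of $\psi_\gamma$ is zero and that, after the Gottschalk--Hedlund-type obstruction is ruled out, $\psi_\gamma$ cannot be a coboundary — indeed one exhibits two sequences of return words along which Birkhoff sums diverge with opposite behaviour; (iv) check the precise quantitative hypotheses (the "balance" and "non-degeneracy" conditions) demanded by Theorem~\ref{non-ergodicitycriterion} on these renormalization times; (v) invoke the criterion to conclude non-regularity of $\psi_\gamma$, hence that $T_{\psi_\gamma}$ is non-ergodic with uncountably many ergodic components, and translate this back via the suspension to the flow $(\widetilde\varphi^\theta_t)_{t\in\R}$, noting that recurrence of the cover already gives the absence of finite invariant sets. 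The hard part will be step (iii)–(iv): establishing, for a full-measure set of directions rather than merely a residual or positive-measure set, the precise oscillation estimates for the cocycle along renormalization — this requires combining the ergodicity of the Teichm\"uller flow on the relevant locus with a careful combinatorial analysis special to genus two, and it is where the bulk of the work lies.
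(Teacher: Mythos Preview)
Your proposal misreads both the statement and the mechanism of Theorem~\ref{non-ergodicitycriterion}. The criterion does \emph{not} ask for oscillation estimates on Birkhoff sums along renormalization towers or ``balance/non-degeneracy'' conditions. Its hypotheses are purely algebraic--geometric: an $SL(2,\R)$-invariant ergodic measure $\mu$ on $\mathcal{M}^{(1)}(M)$, a constant invariant orthogonal splitting $H_1(M_\omega,\Q)=K_1\oplus K_1^\perp$ over $\supp\mu$ with $\dim_\Q K_1=2$, and non-zero Lyapunov exponents of the Kontsevich--Zorich cocycle on $\R\otimes_\Q K_1$. The actual proof then picks a smooth closed form $\rho$ with $[\rho]$ spanning the one-dimensional intersection $E_\omega^-(M,\R)\cap K^1$; by Theorem~\ref{cohthm} the associated cocycle $\psi_\rho$ is a \emph{coboundary}, and by Lemma~\ref{irrationality} the periods of $\rho$ over a $\Z$-basis of $K_1$ are irrationally related. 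After rescaling, $\psi_\gamma$ is cohomologous to a cocycle with values in $\overline{\alpha}\,\Z$ for some $\overline{\alpha}\notin\Q$, whence $E_\Z(\psi_\gamma)\subset\Z\cap\overline{\alpha}\,\Z=\{0\}$. This essential-values trick is the heart of the argument and is entirely absent from your plan.

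Several further points. The genus-two assumption is used because $\dim_\Q H_1^{(0)}(M,\Q)=2g-2=2$, giving exactly the required $2$-dimensional $K_1$ (and Bainbridge's Theorem~\ref{Bain} supplies KZ-hyperbolicity); it is \emph{not} about separating curves. A recurrent non-trivial $\gamma$ has $\hol(\gamma)=0$ but $\gamma\neq0$ in $H_1(M,\Z)$, so $\gamma$ is precisely \emph{not} separating---your structural picture of ``two one-handled pieces glued along $\gamma$'' is wrong. Also, recurrence of the cover does \emph{not} by itself rule out invariant sets of finite measure; that requires showing $\psi_\gamma$ is not a coboundary, which the paper gets from $\mathcal{P}\gamma\notin E_\omega^-(M,\R)$ (Lemma~\ref{notunstable}) and Theorem~\ref{cohthm}. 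Finally, the criterion gives the conclusion only for the vertical flow for $\mu_0$-a.e.\ $\omega$ in the $SL(2,\R)$-orbit; to obtain the statement for a.e.\ direction $\theta$ on the fixed $(M,\omega_0)$ you still need the Fubini argument of Proposition~\ref{Fubinilattice}, which your plan omits.
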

Let us give an example to which Theorem \ref{maintheorem} applies.
Consider the  infinite staircase in Figure \ref{staircase} and let
us denote by $Z^\infty_{(3,0)}$ the surface obtained by
identifying the opposite parallel sides belonging to the boundary  by
translations (the notation $Z^\infty_{(3,0)}$ refers to
\cite{Hu-Sch}). The surface $Z^\infty_{(3,0)}$ inherits from
$\mathbb{R}^2$ a translation surface structure and thus one can
consider the directional flows $(\varphi^\theta_t)_{t\in\R}$ in
direction $\theta$ on $Z^\infty_{(3,0)}$.
\begin{figure}[h]
\centering \subfigure[Translation surface
$Z^\infty_{(3,0)}$.]{\label{staircase}
\includegraphics[height=0.15\textheight]{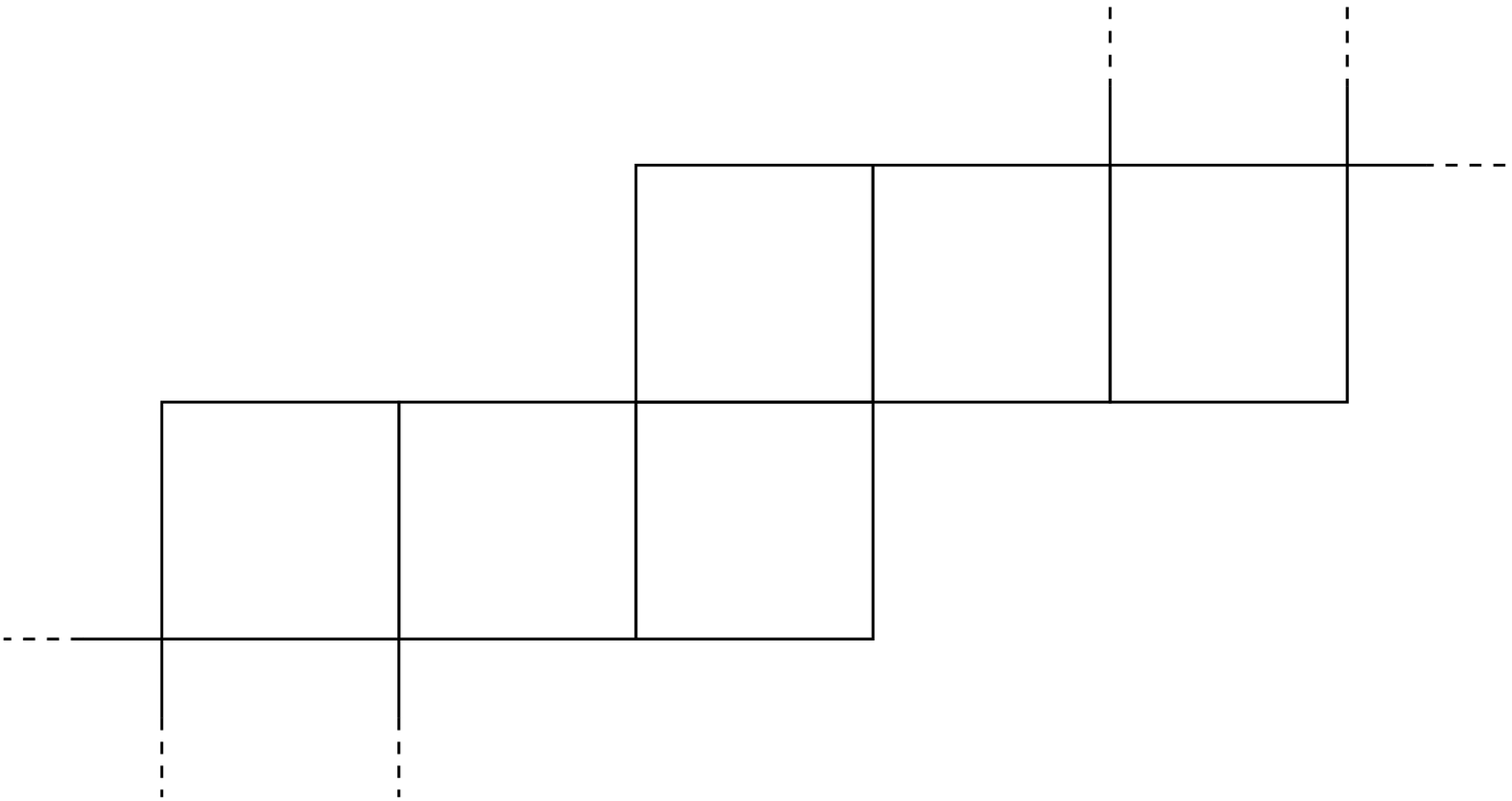}}
\hspace{2mm} \subfigure[Translation surface
$Z_{(3,0)}$.]{\label{example3}
\includegraphics[width=0.3\textwidth]{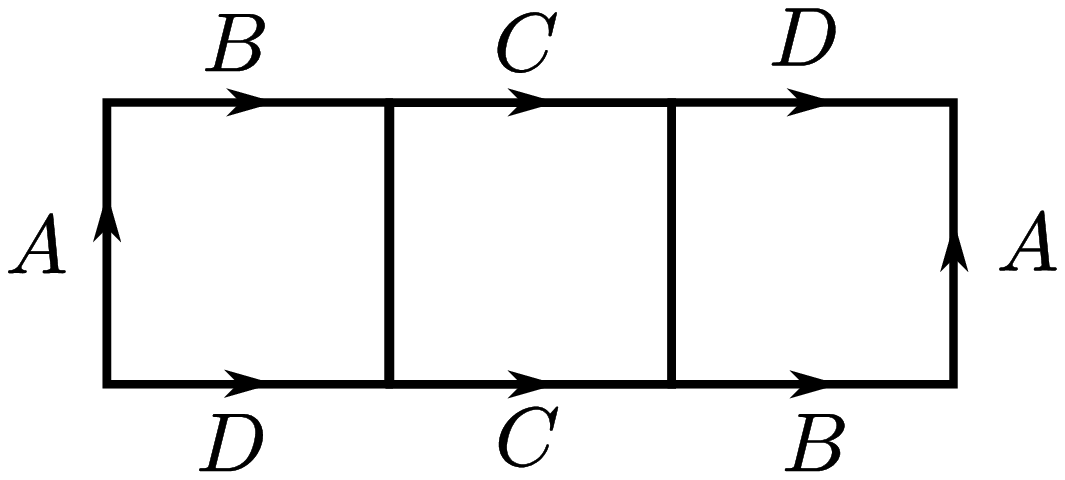}}
\caption{The infinite staircase translation surface
$Z^\infty_{(3,0)}$.}\label{Rauzy}
\end{figure}
One can see that this infinite translation surface is a
$\mathbb{Z}$-cover of the genus two square-tiled surface
$Z_{(3,0)}$ shown in Figure \ref{example3}. Thus, as a consequence
of Theorem~\ref{maintheorem} we get:
\begin{corollary}\label{non-divergence}
The set of directions $\theta\in S^1$ such that the directional
flow $(\varphi^\theta_t)_{t\in\R}$ on the infinite staircase
$Z^\infty_{(3,0)}$ is ergodic  has Lebesgue measure zero.
Moreover, for almost every $\theta\in S^1$,
$(\varphi^\theta_t)_{t\in\R}$ has no invariant sets of finite
measure.
\end{corollary}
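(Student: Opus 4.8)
The plan is to obtain Corollary \ref{non-divergence} as a direct application of Theorem \ref{maintheorem}, the only substantive work being to check that the infinite staircase $Z^\infty_{(3,0)}$ satisfies the hypotheses of that theorem, i.e.\ that it is a recurrent $\Z$-cover, given by a non-trivial homology class, of a square-tiled translation surface of genus two. First I would describe the covering structure explicitly: $Z^\infty_{(3,0)}$ is glued from infinitely many unit squares in the staircase pattern of Figure \ref{staircase}, and the ``shift by one fundamental block along the staircase'' is a free, properly discontinuous action of $\Z$ by translation automorphisms, whose quotient is the compact surface $Z_{(3,0)}$ of Figure \ref{example3}. This exhibits $Z^\infty_{(3,0)}$ as $\widetilde{M}_\gamma$ for the non-trivial class $\gamma\in H_1(Z_{(3,0)},\Z)$ Poincar\'e-dual to the homomorphism $\pi_1(Z_{(3,0)})\to\Z$ that classifies the cover. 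Counting squares shows that $Z_{(3,0)}$ is tiled by three unit squares with a single singular point of cone angle $6\pi$ (equivalently a single zero of order two of the Abelian differential), so by the Euler characteristic formula it has genus two; and it is square-tiled essentially by construction, the square-tiling map $p\colon Z_{(3,0)}\to\R^2/\Z^2$ sending each unit square identically to the standard square and satisfying $\omega=p^*(dz)$.

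Next I would verify recurrence, namely that $\hol(\gamma)=\int_\gamma\omega=0$. Since $\omega=p^*(dz)$, the holonomy homomorphism $\hol\colon H_1(Z_{(3,0)},\Z)\to\C$ factors as $\gamma\mapsto\int_{p_*\gamma}dz$ through $p_*\colon H_1(Z_{(3,0)},\Z)\to H_1(\R^2/\Z^2,\Z)\cong\Z^2$, so it suffices to check that $p_*\gamma=0$, i.e.\ that the projection to the torus of the classifying cycle is null-homologous; equivalently, that the deck transformation of $Z^\infty_{(3,0)}$ carries zero net displacement modulo $\Z^2$. This is visible from the staircase picture, since every square of $Z^\infty_{(3,0)}$ projects onto the single square $\R^2/\Z^2$, and hence the $\Z$-cover $(\widetilde{M}_\gamma,\widetilde\omega_\gamma)=Z^\infty_{(3,0)}$ is recurrent in the sense of the definition following Proposition~15 of \cite{Ho-We}.

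With these facts in place, Theorem \ref{maintheorem} applies to $Z^\infty_{(3,0)}$ and gives, for a.e.\ $\theta\in S^1$, that $(\varphi^\theta_t)_{t\in\R}$ on $Z^\infty_{(3,0)}$ is not ergodic and has no invariant set of positive finite measure. The first conclusion confines the set of ergodic directions to a Lebesgue-null set, hence that set is null; the second is exactly the asserted absence of invariant sets of finite measure. The only non-formal steps are the two topological checks above---identifying the deck transformation and confirming genus two and recurrence---and these, while routine, are where all the content of the reduction lies; there is no genuine analytic obstacle once Theorem \ref{maintheorem} is granted.
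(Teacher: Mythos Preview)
Your proposal is correct and follows essentially the same route as the paper: verify that $Z^\infty_{(3,0)}$ is a recurrent $\Z$-cover (via a non-trivial $\gamma$) of the genus-two square-tiled surface $Z_{(3,0)}$, then invoke Theorem~\ref{maintheorem}. The only cosmetic difference is that the paper names the classifying cycle explicitly as $\gamma=[B]-[D]$ and checks $\hol(\gamma)=0$ directly from that expression, whereas you argue recurrence through the factorisation $\hol=\int_{p_*(\cdot)}dz$ and the vanishing of $p_*\gamma$; your justification of the latter (``visible from the staircase picture'') is a touch vaguer than simply writing $\gamma=[B]-[D]$ and observing the holonomies cancel, but the content is the same.
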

More generally, a countable family of staircases translation
surfaces $Z^\infty_{(a,b)}$ depending on the natural parameters
$a\geq 2, b\geq 0$ was defined and studied by Hubert and
Schmithüsen in \cite{Hu-Sch}. For $a>2$, these translation
surfaces are $\mathbb{\mathbb{Z}}$-covers of genus $2$
square-tiled surfaces.  Thus, Corollary \ref{non-divergence} holds
for any  $Z^\infty_{(a,b)}$ with $a>2, b\geq 0$.

On the other hand, we remark  that, if  one starts from the
staircase  in Figure \ref{staircase2} and obtains the  translation
surface known as $Z^\infty_{(2,0)}$ by identifying opposite
parallel sides belonging to the boundary, the set of directions
$\theta$ such that  the directional flow
$(\varphi^\theta_t)_{t\in\R}$ on the infinite staircase
$Z^\infty_{(2,0)}$ is ergodic  has \emph{full} Lebesgue measure
(see \cite{Hu-We2}). This difference is related to the fact that
$Z^\infty_{(2,0)}$   is \emph{not}  a $\mathbb{Z}$-cover of a
genus $2$ surface and the study of the directional flows on
$Z^\infty_{(2,0)}$ can be reduced to well-know results of
ergodicity of skew products over rotations (see \cite{Hu-We2} for
references).

\begin{figure}[h]
\includegraphics[height=0.13\textheight]{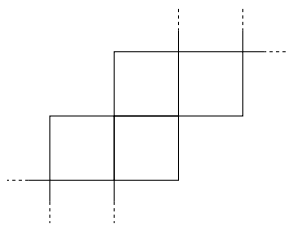}
\caption{The infinite staircases translation surface
$Z^\infty_{(2,0)}$.}\label{staircase2}
\end{figure}
Let us comment on the relation between  Corollary
\ref{non-divergence} of our theorem and another recent result by
Hubert and Weiss. In section \S \ref{Veech:sec} we recall the
definition of the Veech group $SL(M, \omega) < SL(2, \mathbb{R})$
of a translation surface. We say that a translation surface $({M},
\omega)$ (compact or not) is a \emph{lattice surface} if the Veech
group is a lattice in $SL(2, \mathbb{R})$. We say that a
(infinite) translation surface $(\widetilde{M},
\widetilde{\omega}) $  has an \emph{infinite strip} if  there
exists a subset of  $\widetilde{M}$ isometric to the strip
$\mathbb{R}\times (-a,a)$ for some $a>0$ (with respect to the flat
metric induced by $\widetilde{\omega}$ on $\widetilde{M}$).
\begin{theorem}[Hubert-Weiss, \cite{Hu-We2}]\label{HW}
Let $(\widetilde{M}, \widetilde{\omega})$ be a $\mathbb{Z}$-cover
that is a lattice surface and has an infinite strip. Then the
directional flow $(\varphi^\theta_t)_{t\in\R}$ on
$(\widetilde{M},\widetilde{\omega})$ is ergodic for a.e. $\theta
\in {S}^1$.
\end{theorem}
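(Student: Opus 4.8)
The plan is to translate the statement into the language of $\Z$-valued cocycles and essential values and then exploit the self-similarity coming from the lattice property. Fix a transversal to the directional flow on the base surface $(M,\omega)=(\widetilde M,\widetilde\omega)/\Z$; its first return map is an interval exchange transformation $T=T^\theta$, and the first return map of $(\widetilde\varphi^\theta_t)_{t\in\R}$ to the lifted transversal is the $\Z$-extension $T_\psi(x,m)=(Tx,m+\psi(x))$, where $\psi(x)$ is the signed number of intersections of the return orbit segment of $x$ with a fixed multicurve $c\subset M$ Poincar\'e dual to the surjection $\lan\gamma,\cdot\ran\colon H_1(M,\Z)\to\Z$ defining the cover. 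Ergodicity of $(\widetilde\varphi^\theta_t)_{t\in\R}$ is equivalent to ergodicity of $T_\psi$, and by the essential values criterion the latter is equivalent to the group $E(\psi)$ of essential values of $\psi$ being all of $\Z$. So the plan is: (i) for a.e.\ $\theta$ place ourselves in the recurrent, uniquely ergodic setting; (ii) use the infinite strip together with renormalization to produce a nonzero essential value; (iii) upgrade $E(\psi)$ to all of $\Z$.

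For (i), since $(\widetilde M,\widetilde\omega)$ is a lattice surface so is $(M,\omega)$ (its Veech group contains the finite index subgroup $\SL(\widetilde M,\widetilde\omega)$), and the Veech dichotomy applies to $(M,\omega)$: every direction is either completely periodic (decomposing $M$ into cylinders) or uniquely ergodic and minimal, and the completely periodic directions are countable; hence $T^\theta$ is uniquely ergodic for a.e.\ $\theta$. Moreover, since $\SL(M,\omega)$ is a lattice, the space $\SL(2,\R)/\SL(M,\omega)$ has finite volume and its geodesic flow is ergodic, so for a.e.\ $\theta$ the Teichm\"uller geodesic in direction $\theta$ is recurrent to a compact set and visits each cusp infinitely often. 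Finally, the cover is recurrent, i.e.\ $\hol(\gamma)=0$ --- in the cases of interest this is immediate, and in general it can be read off the lattice structure, since otherwise a.e.\ direction would be dissipative, contradicting the conclusion --- so by Proposition~15 of \cite{Ho-We} the cocycle $\psi$ is recurrent for a.e.\ $\theta$.

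For (ii), the infinite strip is isometric to $\R\times(-a,a)$ in some direction $w$, and it is the universal cover of a cylinder $C_0$ of $(M,\omega)$ in direction $w$; thus $w$ is a completely periodic direction and $d_0:=\lan\gamma,\gamma_{C_0}\ran\neq 0$, where $\gamma_{C_0}$ is the core of $C_0$ --- this nonvanishing is precisely what forces the lift of $C_0$ to be an unbounded strip rather than a disjoint union of cylinders, and it is equivalent to $c$ crossing $C_0$ a net of $d_0$ times. Now fix a typical $\theta$ and, using the parabolic element of $\SL(M,\omega)$ fixing the direction $w$ (and, if necessary, first moving $\theta$ into the basin of this cusp using the Veech group, which is possible because the geodesic enters the $w$-cusp infinitely often), renormalize the flow in direction $\theta$; deep in the $w$-cusp the Rauzy--Veech towers for $T^\theta$ over which the cocycle sum picks up exactly one excursion through $C_0$ can be isolated, and comparing two such towers differing by a single $C_0$-excursion yields the essential value $d_0$. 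Concretely this produces, for infinitely many $n$, a set $A_n$ of measure bounded below, a time $k_n\to\infty$ with $\mu(T^{k_n}A_n\triangle A_n)\to 0$, and $\psi_{k_n}\equiv d_0$ on $A_n$, so $d_0\in E(\psi)$ for a.e.\ $\theta$. For (iii), running this at every completely periodic direction of $(M,\omega)$ --- all of whose cusps are visited by the geodesic for a.e.\ $\theta$ --- shows that $\lan\gamma,\gamma_C\ran\in E(\psi)$ for the core $\gamma_C$ of every cylinder $C$ in a completely periodic direction; since $\gamma$ is primitive in $H_1(M,\Z)$ (a $\Z$-cover corresponds to a surjection $\pi_1(M)\to\Z$) and these cores generate $H_1(M,\Z)$, the integers $\lan\gamma,\gamma_C\ran$ generate $\Z$, so their gcd $1$ lies in the group $E(\psi)$ and $E(\psi)=\Z$. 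Hence $T_\psi$, and therefore $(\widetilde\varphi^\theta_t)_{t\in\R}$, is ergodic for a.e.\ $\theta\in S^1$.

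The crux, and the step I expect to be the main obstacle, is (ii): one must show rigorously that the renormalized dynamics really does carry towers of unbounded height over which the accumulated cocycle equals $d_0$ --- not merely $d_0$ up to a bounded error, and not an unbounded multiple of $d_0$ coming from many successive $C_0$-excursions --- which requires controlling the excursions of the flow near the two boundary components of the strip, where a trajectory may leave $C_0$ before the intersection pattern with $c$ closes up, and pairing up comparable towers to cancel the error term. One must also check that the exceptional set of $\theta$ --- where this fails, or where the Teichm\"uller geodesic is not recurrent, or where $T^\theta$ is not uniquely ergodic --- has Lebesgue measure zero, which is where the ergodicity of the geodesic flow on $\SL(2,\R)/\SL(M,\omega)$ is used in an essential way.
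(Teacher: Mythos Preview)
The paper does not prove this statement. Theorem~\ref{HW} is quoted from Hubert--Weiss \cite{Hu-We2} purely for context, to contrast with Corollary~\ref{non-divergence} and to emphasize that the lattice hypothesis on the $\Z$-cover itself (not merely on the base) is essential for the conclusion. There is no proof in the paper to compare your proposal against.

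That said, your outline does capture the correct overall strategy of the Hubert--Weiss argument: reduce to a $\Z$-valued cocycle over an IET, exploit the Veech dichotomy and recurrence of the Teichm\"uller geodesic on the finite-volume quotient, and manufacture nonzero essential values from the infinite strip via renormalization at the associated cusp. However, as a proof it has real gaps. Your justification that $\hol(\gamma)=0$ (``since otherwise a.e.\ direction would be dissipative, contradicting the conclusion'') is circular --- you are assuming the theorem you want to prove. Step~(ii), which you yourself identify as the crux, is not carried out: you must actually construct the rigid towers on which the Birkhoff sum of $\psi$ equals exactly $d_0$, with measure bounded below and height going to infinity, and this requires a careful analysis of how trajectories traverse the cylinder $C_0$ relative to the transversal; the phrases ``can be isolated'' and ``comparing two such towers'' hide all the work. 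In step~(iii) you assert that cores of cylinders in parabolic directions generate $H_1(M,\Z)$; this is true for Veech surfaces but needs an argument. These are precisely the technical points that the cited paper \cite{Hu-We2} addresses.
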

One can easily check that  $Z_{(3,0)}^\infty$ has an
\emph{infinite strip} (for example in the direction $\theta =
\frac{\pi}{4}$). On the other hand, as it was proved in
\cite{Hu-Sch}, the Veech group $SL(Z_{(3,0)}^\infty)$ is of the
first kind, is infinitely generated and is \emph{not} a lattice.
Thus, our result shows that the assumption that $SL(\widetilde{M},
\widetilde{\omega})$ (and not only $SL( {M}, {\omega})$) is a
lattice is essential for the conclusion of Theorem \ref{HW} to
hold.



\subsection{Outline and structure of the paper}
The Sections from \ref{Zcovers:sec} to \ref{Veech:sec} contain
background material and preliminary results. In \S
\ref{Zcovers:sec} we recall the construction of
$\mathbb{Z}$-covers associated to a homology class and  the
definitions of interval exchange transformations (IETs)
and $\mathbb{Z}$-extensions. We also explain how the study of directional flows on $\mathbb{Z}$ covers can be reduced to the study of $\mathbb{Z}$-extensions of IETs. We then
present some definitions and results used in the proofs about
the theory of essential values (Section
\ref{essentialvalues:sec}), the Kontsevich-Zorich cocycle  (Section
\ref{Teich:sec}) and lattice surfaces (Section \ref{Veech:sec}).

The heart of the paper is contained in
Section~\ref{nonergodicity:sec}, where the  criterion for
non-ergodicity (Theorem \ref{non-ergodicitycriterion}) is both
stated and proved.
In Section~\ref{nonregularity:sec} we state and prove  Theorem
\ref{inv_fin_meas} (on the absence of invariant sets of finite
measure), which provides another crucial ingredient to prove the
presence of  uncountably many ergodic components in the various
applications.

The proofs  of the results stated in the introduction is finally
given in Section \ref{Fubini:sec} and follows from Theorems
\ref{non-ergodicitycriterion} and \ref{inv_fin_meas} essentially
from Fubini-type arguments. The first Fubini argument presented
applies to  Veech surfaces  and appears in
\S\ref{applications:sec}, where we  prove Theorem
\ref{maintheorem} and Corollary \ref{non-divergence}. In
\S\ref{strip} and \S\ref{Ehrenfest:finalsec} we prove respectively
Theorem \ref{stripbilliard} on the billiard in a strip  and
Theorem \ref{Ehrenfestthm} and Corollary \ref{Ehrenfestcor} on the
Ehrenfest wind-tree models.

In the Appendix we include the  proof of two technical results used in the proof of the non-ergodicity criterion and stated in Section \ref{Teich:sec}, i.e.~Lemma
\ref{comparisontimes:lemma} and Theorem \ref{cohthm}, which relates coboundaries with the unstable space of the Kontsevich-Zorich.

\section{$\Z$-covers and extensions of interval exchange transformations}\label{Zcovers:sec}
\subsubsection*{$\Z$-covers}
Let $(M,\omega)$ be a compact connected translation surface  and
$\widetilde{M}$  a  {\em $\Z$-cover} of $M$ (see \S
\ref{intro:sec}). Let us show that there is a one-to-one
correspondence between $H_1(M,\Z)$ and the set of $\Z$-covers, up
to isomorphism\footnote{Let us remark that here we consider only unramified $\mathbb{Z}$-covers. More generally, one can consider ramified covers determined by elements in the relative homology $H_1(M, \Sigma, \mathbb{Z})$, see \cite{Ho-We}.}.
Let us first recall  that we have the following isomorphism  (we
refer for example to Proposition~14.1 in \cite{Ful}):
\[\operatorname{Hom}(\pi_1(M,x),\Z)\longleftrightarrow\{\Z\text{-covers of }M\}/\text{isomorphism}.\]
In view of Hurewicz theorem $\pi_1(M,x)/[\pi_1(M,x),\pi_1(M,x)]$
and $H_1(M,\Z)$ are isomorphic, so
$\operatorname{Hom}(\pi_1(M,x),\Z)$ and
$\operatorname{Hom}(H_1(M,\Z),\Z)$ are isomorphic as well.  This
yields a one-to-one correspondence
\[\operatorname{Hom}(H_1(M,\Z),\Z)\longleftrightarrow\{\Z\text{-covers of }M\}/\text{isomorphism}.\]
The space $H_1(M,\Z)$ is isomorphic to
$\operatorname{Hom}(H_1(M,\Z),\Z)$ via the map
$\gamma\mapsto\phi_\gamma: H_1(M,\Z)\to\Z$,
$\phi_\gamma(\gamma')=\langle \gamma,\gamma'\rangle $, where
$\langle \,\cdot\,,\,\cdot\,\rangle :H_1(M,\R)\times
H_1(M,\R)\to\R$ is the intersection form (see for example
Proposition~18.13 in \cite{Ful}). This gives the next
correspondence
\begin{equation}\label{correspondence}
H_1(M,\Z)\longleftrightarrow\{\Z\text{-covers of
}M\}/\text{isomorphism}.
\end{equation}
The  $\Z$-cover $\widetilde{M}_\gamma$   determined by $\gamma\in
H_1(M,\Z)$ under the correspondence (\ref{correspondence}) has the
following properties.  Remark that $\langle \,\cdot\,,\,\cdot\,
\rangle $ restricted to $ H_1(M,\Z) \times  H_1(M,\Z)$ coincides
with the algebraic intersection number. If $\sigma$  is a close
curve in $M$ and  $n:=\langle  \gamma, [\sigma] \rangle \in
\mathbb{Z}$ ($[\sigma]\in H_1(M,\Z)$), then $\sigma$  lifts to a
path $\widetilde{\sigma}: [t_0, t_1]\to \widetilde{M}_\gamma$ such
that $\sigma(t_1) = n \cdot \sigma(t_0)$, where $\cdot$ denotes
the action of $\Z$ on
$(\widetilde{M}_\gamma,\widetilde{\omega}_\gamma)$ by deck
transformations. Conversely, if $v:[t_0,t_1]\to\widetilde{M}$ is a
curve such
\begin{equation}\label{lift_curve}
v(t_1)=n\cdot v(t_0)\text{ for some }n\in\Z, \ \text{ then
}\langle \gamma , [p\circ v]\rangle =n,
\end{equation}
where $[p\circ v]\in H_1(M,\Z)$ is the homology class of the
projection  of $v$ by $p:\widetilde{M}_\gamma \to M$.

\subsubsection*{Interval exchange transformations}
Let us recall the definition of interval exchange transformations
(IETs), with the presentation and notation from \cite{Vi0} and
\cite{ViB}. Let $\mathcal{A}$ be a $d$-element alphabet and let
$\pi=(\pi_0,\pi_1)$ be a pair of bijections
$\pi_\vep:\mathcal{A}\to\{1,\ldots,d\}$ for $\vep=0,1$. Denote by
$\mathcal{S}_{\mathcal{A}}$ the set of all such pairs. Let us
consider $\lambda=(\lambda_\alpha)_{\alpha\in\mathcal{A}}\in
\R_+^{\mathcal{A}}$, where $\R_+=(0,+\infty)$. Set
$|\lambda|=\sum_{\alpha\in\mathcal{A}}\lambda_\alpha$,
$I=\left[0,|\lambda|\right)$ and, for $\epsilon=0,1$, let
\[I^\epsilon_{\alpha}=[l^\epsilon_\alpha,r^\epsilon_\alpha),\quad\text{ where }\quad
l^\epsilon_\alpha=\sum_{\pi_\epsilon(\beta)<\pi_\epsilon(\alpha)}\lambda_\beta,\;\;\;r^\epsilon_\alpha
=\sum_{\pi_\epsilon(\beta)\leq\pi_\epsilon(\alpha)}\lambda_\beta.\]
Then $|I_\alpha^\epsilon|=\lambda_\alpha$ for
$\alpha\in\mathcal{A}$. Given $(\pi,\lambda)\in
\mathcal{S}_{\mathcal{A}}\times\R_+^\mathcal{A}$, let
$T_{(\pi,\lambda)}:[0,|\lambda|)\rightarrow[0,|\lambda|)$ stand
for the {\em interval exchange transformation} (IET) on $d$
intervals $I_\alpha$, $\alpha\in\mathcal{A}$, which isometrically
maps each $I^0_\alpha$ to $I^1_\alpha$, i.e.\ $T_{(\pi,\lambda)}
(x) = x+w_\alpha$ with $w_\alpha:=l^1_\alpha - l^0_\alpha$, for $x
\in I^0_\alpha$, $\alpha\in \mathcal{A}$.

\subsubsection*{Cocycles and skew-product extensions}
Let $T$ be an ergodic automorphism of standard probability space
$\xbm$. Let $G$ be a locally compact abelian second countable
group. Each measurable function $\psi:X \rightarrow G$ determines
a
{\em cocycle} $\psi^{(\,\cdot\,)}$ 
for $T$ by the formula
\begin{equation}\label{cocycledef}
\psi^{(n)}(x)=\left\{
\begin{array}{ccl}
\psi(x)+\psi(Tx)+\ldots+\psi(T^{n-1}x) & \mbox{if} & n>0 \\
0 & \mbox{if} & n=0\\
-(\psi(T^nx)+\psi(T^{n+1}x)+\ldots+\psi(T^{-1}x)) & \mbox{if} &
n<0,
\end{array}
\right.
\end{equation}
the function $\psi$ is also called a cocycle. The {\it skew
product extension} associated to the cocycle $\psi$ is the map
$T_\psi: X\times G \rightarrow  X\times G $
\[ T_{\psi}(x,y)=(Tx,y+{\psi}(x)). \]
Clearly $T_\psi$ preserves the product of $\mu$ and  the Haar
measure $m_G$ on $G$.

\subsection{Reduction to $\mathbb{Z}$-extensions over IETs}\label{reduction:sec}
Let us explain how the question of ergodicity for directional
flows for $\Z$-covers of a compact translation surface
$(M,\omega)$ reduces to the study of $\Z$-valued cocycles for
interval exchange transformations (IETs). Let
$(\widetilde{\varphi}^\theta_t)_{t\in\R}$ be a directional flows
for a  $\Z$-cover $(\widetilde{M},\widetilde{\omega})$ of
$(M,\omega)$ such that the flow $({\varphi}^\theta_t)_{t\in\R}$ on
$M$ is ergodic. Let $I\subset M\setminus\Sigma$ be an interval
transversal to the direction $\theta$ with no self-intersections.
The Poincar\'e return map $T:I\to I$ is a minimal ergodic IET (eif
$({\varphi}^\theta_t)_{t\in\R}$ is ergodic), whose numerical data
will be denoted by
$(\pi,\lambda)\in\mathcal{S}_{\mathcal{A}}\times\R_+^{\mathcal{A}}$
(see for example \cite{ViB, YoLN}). Let $\tau:I\to\R_+$ be the
function which assigns to $x \in I$ the first return time
$\tau(x)$ of $x$ to $I$ under the flow. The function $\tau$ is
constant and equal to some $\tau_\alpha$ on  each exchanged
interval $I_\alpha$. The flow $(\varphi^{\theta}_t)_{t\in\R}$ is
hence measure-theoretically isomorphic to the special flow built
over the IET $T:I\to I$ and under the roof function
$\tau:I\to\R_+$. For every $\alpha\in\mathcal{A}$ we will denote
by $\gamma_\alpha\in H_1(M,\Z)$ the homology class of any loop
$v_x$ formed by the segment of orbit for
$(\varphi^\theta_t)_{t\in\R}$ starting at any $x\in\Int I_\alpha$
and ending at $Tx$ together with the segment of $I$ that joins
$Tx$ and $x$, that we will denote by $[Tx,x]$.

Let us now define a cross-section for the flow
$(\widetilde{\varphi}^\theta_t)_{t\in\R}$ and describe the
corresponding Poincar{\'e} map. Let $\widetilde{I}$ be the
preimage of the interval $I$ via the covering map
$p:\widetilde{M}\to M$. Fix $I_0\subset \widetilde{I}$ a connected
component of $\widetilde{I}$. Then $p|_{I_0}:I_0\to I$ is a
homomorphism and $\widetilde{I}$ is homeomorphic to $I\times\Z$ by
the map
\begin{equation}\label{rho}
I\times\Z\ni (x,n)\mapsto \varrho(x,n):= n\cdot
(p|_{I_0})^{-1}(x)\in \widetilde{I}.
\end{equation}
Denote
by $\widetilde{T}:\widetilde{I}\to \widetilde{I}$ the the
Poincar\'e return map to $\widetilde{I}$ for the flow
$(\widetilde{\varphi}^\theta_t)_{t\in\R}$.

\begin{lemma}\label{lem_flow_auto}
Suppose that
$(\widetilde{M},\widetilde{\omega})=(\widetilde{M}_\gamma,\widetilde{\omega}_\gamma)$
for some $\gamma\in H_1(M,\Z)$ is a  $\Z$-cover. Then the
Poincar\'e  return map $\widetilde{T}$ is isomorphic (via the map
$\varrho$ given in (\ref{rho})) to a skew product
$T_\psi:I\times\Z\to I\times\Z$ of the form
$T_\psi(x,n)=(Tx,n+\psi(x))$, where $\psi=\psi_\gamma:I\to\Z$ is a
piecewise constant function given by
\[\psi_\gamma(x)=\langle \gamma,\gamma_\alpha\rangle  \quad\text{ if }\quad x\in I_\alpha\quad\text{ for each }\alpha\in\mathcal{A}\]
and $T$ and $\gamma_\alpha$ for $\alpha\in\mathcal{A}$ are as
above.
\end{lemma}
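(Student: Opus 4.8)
The plan is to unwind the definitions and check that the explicit description of $\widetilde{T}$ via $\varrho$ matches the skew product $T_\psi$. First I would observe that since the flow $(\widetilde{\varphi}^\theta_t)_{t\in\R}$ on $\widetilde{M}$ commutes with the $\Z$-action by deck transformations (the deck transformations are isometries preserving the direction $\theta$), and $\widetilde{I}$ is $\Z$-invariant, the Poincar\'e return map $\widetilde{T}$ commutes with the $\Z$-action. Under the identification $\varrho$ of $\widetilde{I}$ with $I \times \Z$, the $\Z$-action becomes $m\cdot(x,n) = (x,n+m)$, so $\widetilde{T}$ transported to $I \times \Z$ must have the form $(x,n) \mapsto (S(x,n), n + c(x,n))$ where in fact $S$ and $c$ cannot depend on $n$ (by commutation with the $\Z$-action): so $\widetilde{T}$ is conjugate to a skew product $(x,n)\mapsto (T'x, n+\psi(x))$ for some $T':I\to I$ and $\psi:I\to\Z$.

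Second, I would identify $T'$ with $T$: since $p:\widetilde M \to M$ is a covering map intertwining $(\widetilde\varphi^\theta_t)$ with $(\varphi^\theta_t)$ and sending $\widetilde I$ to $I$, the return map downstairs is $p\circ \widetilde T = T \circ p$ on $\widetilde I$; reading this through $\varrho$ and $p|_{I_0}$ gives $T' = T$. It remains to compute $\psi$. Fix $x \in \Int I_\alpha$ and lift the orbit segment from $x$ to $Tx$: starting from $\varrho(x,0) = (p|_{I_0})^{-1}(x) \in I_0$, flow along the directional flow on $\widetilde M$ until the first return to $\widetilde I$; the endpoint is $\widetilde T(\varrho(x,0))$, which by the previous paragraph equals $\varrho(Tx, \psi(x))$, i.e.~$\psi(x)\cdot (p|_{I_0})^{-1}(Tx)$. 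Now concatenate this lifted orbit segment $\widetilde v_x$ with a lift of the return segment $[Tx,x]\subset I$: choosing the lift of $[Tx,x]$ that starts at the endpoint $\psi(x)\cdot (p|_{I_0})^{-1}(Tx)$ of $\widetilde v_x$, its other endpoint is $\psi(x)\cdot(p|_{I_0})^{-1}(x) = \psi(x)\cdot \varrho(x,0) = \varrho(x,\psi(x))$. Thus the closed lift of the loop $v_x$ (whose class in $H_1(M,\Z)$ is $\gamma_\alpha$ by definition) goes from $\varrho(x,0)$ to $\varrho(x,\psi(x))$, i.e.~it starts at a point $w$ and ends at $\psi(x)\cdot w$. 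By the property \eqref{lift_curve} of the $\Z$-cover $\widetilde M_\gamma$ determined by $\gamma$, this forces $\langle \gamma, [p\circ (\text{lift of }v_x)]\rangle = \psi(x)$, and since $p$ maps this lift back to $v_x$ we get $\psi(x) = \langle \gamma, [v_x]\rangle = \langle\gamma,\gamma_\alpha\rangle$, as claimed.

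The main obstacle, and the point requiring the most care, is the bookkeeping in the last step: one must be consistent about which connected component of $\widetilde I$ (equivalently which branch $\psi(x)\cdot(p|_{I_0})^{-1}$) each lifted piece lands in, and verify that the concatenation of the lifted orbit segment with the lifted return segment is indeed a continuous lift of the loop $v_x$ with matching endpoints, so that \eqref{lift_curve} applies. One should also note $\psi$ is constant on each $I_\alpha$ because $\gamma_\alpha$ depends only on $\alpha$ (the loops $v_x$ for $x,x'$ in the same $\Int I_\alpha$ are homotopic, being obtained by sliding continuously), which is exactly the statement that $T$ has a piecewise-constant return-time/return-data structure on the exchanged intervals; hence $\psi = \psi_\gamma$ is the piecewise constant $\Z$-valued function in the statement, and $\widetilde T \cong T_{\psi_\gamma}$ via $\varrho$.
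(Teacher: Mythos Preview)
Your proposal is correct and follows essentially the same approach as the paper: lift the loop $v_x$ to $\widetilde{M}_\gamma$ starting at a chosen point of $\widetilde{I}$, track where the endpoint lands using property \eqref{lift_curve}, and read off the value $\psi(x)=\langle\gamma,\gamma_\alpha\rangle$. The only organizational difference is that you first argue abstractly (via commutation with the deck group) that $\widetilde{T}$ must be a skew product over $T$ and then compute $\psi$ at level $n=0$, whereas the paper computes $\widetilde{T}\varrho(x,n)$ directly for arbitrary $n$; the core lifting computation is identical.
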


\begin{proof}
Let us first remark that
\begin{equation}\label{prrho}
p(\varrho(x,n))=x\text{ and }m\cdot
\varrho(x,n)=\varrho(x,m+n)\text{ for all }x\in I,\;m,n\in\Z.
\end{equation}
Moreover, if $\varrho(x,n), \varrho(x',n')\in\widetilde{I}$ are
joined by a curve in $\widetilde{I}$ then the points belong to the
same connected component of $\widetilde{I}$, hence $n=n'$.
Fix $(x,n)\in\Int I_\alpha\times\Z$ and denote by $v_{x,n}$ the
lift of the loop $v_x$ which starts from the point
$\varrho(x,n)\in\widetilde{I}$. Setting
$\varrho(x,n_e)\in\widetilde{I}$ by its endpoint, by
\eqref{lift_curve} and \eqref{prrho}, we have
\[
\varrho(x,n_e)=\langle  \gamma, [v_x]\rangle \cdot \varrho(x,n)=
\langle \gamma, \gamma_\alpha\rangle  \cdot
\varrho(x,n)=\varrho\left(x,n+\langle \gamma,\gamma_\alpha\rangle
\right),
\]
so $n_e=n+\langle \gamma, \gamma_\alpha\rangle $. Since $v_{x,n}$
is a lift of the curve formed by the segment of orbit for
$(\varphi^\theta_t)_{t\in\R}$ starting at $x\in\Int I_\alpha$ and
ending at $Tx$ together with the segment of $I$ that joins $Tx$
and $x$, $v_{x,n}$ is formed by the segment of orbit for
$(\widetilde{\varphi}^\theta_t)_{t\in\R}$ starting at
$\varrho(x,n)\in\widetilde{I}$ and ending at
$\widetilde{T}\varrho(x,n)$ together with a curve in
$\widetilde{I}$ that joins $\widetilde{T}\varrho(x,n)$ and
$\varrho(x,n_e)$. As $p(\widetilde{T}\varrho(x,n))=Tx$ and the
points $\widetilde{T}\varrho(x,n)$ and $\varrho(x,n_e)$ belong to
the same connected component of $\widetilde{I}$, it follows that
\[\widetilde{T}\varrho(x,n)=\varrho(Tx,n_e)=\varrho\left(Tx,n+\langle  \gamma, \gamma_\alpha\rangle \right),\]
which completes the proof.
\end{proof}

\begin{remark}\label{redtoskew}
The ergodicity of the flow
$(\widetilde{\varphi}^\theta_t)_{t\in\R}$ on
$(\widetilde{M}_\gamma,\widetilde{\omega}_\gamma)$ is equivalent
to the ergodicity of its Poincar{\'e} map $\widetilde{T}$ and
thus,  by Lemma \ref{lem_flow_auto}, it is equivalent to the
ergodicity of the skew product $T_{\psi_\gamma}:I\times \Z\to
I\times\Z$.
\end{remark}

We now recall some properties of this reduction for a special
choice of the section $I$, which will be useful in
\S\ref{Fubini:sec}. For simplicity let  $\theta = \pi/2$ and
assume in addition that the vertical flow
$(\widetilde{\varphi}^v_t)_{t\in\R}$  has no vertical saddle
connections, i.e.~none of its trajectory joins two points of
$\Sigma$, and that the interval $I$ is horizontal and it is chosen
so that one endpoint belongs to the singularity set $\Sigma$ and
the other belongs to an incoming or outgoing separatix, that is to
a trajectory which ends or begins at a point of $\Sigma$. In this
case the IET $T$ has the minimal possible number of exchanged
intervals and the special flow representation is known as
\emph{zippered rectangle} (see \cite{ViB} or \cite{YoLN} for more
details). Recall that each discontinuity of $T$ belongs to an
incoming separatrix (and, by choice, also the endpoints of $I$
belong to separatrices). For each $\alpha \in \mathcal{A}$, let
$\sigma_{l,\alpha}\in\Sigma$ (respectively
$\sigma_{r,\alpha}\in\Sigma$) be the singularity of the separatrix
through the left (right) endpoint of $I_{\alpha}$.

While homology classes  $\{\gamma_\alpha:\alpha\in\mathcal{A}\}$
defined at the beginning of this \S\ref{reduction:sec} generate
the homology $H_1(M, \Z)$ (Lemma 2.17, \S 2.9 in \cite{ViB}), one
can construct a base of the relative homology $H_1(M,\Sigma,\Z)$
as follows. For each $\alpha \in \mathcal{A}$ denote by
$\xi_\alpha\in H_1(M,\Sigma,\Z)$ the relative homology class of
the path which joins $\sigma_{l,\alpha}$ to $\sigma_{r,\alpha}$,
obtained juxtaposing the segment of separatrix starting from
$\sigma_{l,\alpha}$ up to the left endpoint of $I_\alpha$, the
interval $I_\alpha$, and the segment of  separatrix starting from
the right endpoint of $I_\alpha$ and ending at
$\sigma_{r,\alpha}$. Then  $\{\xi_\alpha:\alpha\in\mathcal{A}\}$
establishes a basis of the relative homology $H_1(M,\Sigma, \Z)$
(see \cite{YoLN}). This basis allows us to explicitly compute the
vectors $(\lambda_{\alpha})_{\alpha \in \mathcal{A}}$ and
$(w_{\alpha})_{\alpha \in \mathcal{A}}$ defining $T$ and the
return times $(\tau_{\alpha})_{\alpha \in \mathcal{A}}$ as follows
(see \cite{ViB} or \cite{YoLN}): \be\label{parameters}
\lambda_\alpha=\int_{\xi_\alpha}\Re\omega, \qquad
w_\alpha=\int_{\gamma_\alpha}\Re\omega, \qquad
\tau_\alpha=\int_{\gamma_\alpha}\Im\omega \qquad \text{ for all }
\quad \alpha\in \mathcal{A}. \ee

\section{Essential values of cocycles}\label{essentialvalues:sec}

We give here a brief overview of the tools needed to prove the
non-ergodicity of the skew product $T_\psi$ (see
Section~\ref{reduction:sec}) and describe its ergodic components.
For further background material concerning skew products and
infinite measure-preserving dynamical systems we refer the reader
to \cite{Aa} and \cite{Sch}.

\subsection{Cocycles for transformations and essential values.}
Given  an ergodic automorphism $T$ of standard probability space
$\xbm$,  a locally compact abelian second countable group $G$ and
a cocycle $\psi:X \rightarrow G$ for $T$, consider the
skew-product extension $T_\psi: (X\times
G,\mathcal{B}\times\mathcal{B}_G,\mu\times m_G) \rightarrow
(X\times G,\mathcal{B}\times\mathcal{B}_G,\mu\times m_G)$
($\mathcal{B}_G$ is the Borel $\sigma$-algebra on $G$) given by $
T_{\psi}(x,y)=(Tx,y+{\psi}(x))$.

Two cocycles $\psi_1,\psi_2: X\to G$ for $T$ are called {\em
cohomologous} if there exists a measurable function $g:X\to G$
(called the {\em transfer function}) such that
$\psi_1=\psi_2+g-g\circ T$. Then the corresponding skew products
$T_{\psi_1}$ and $T_{\psi_2}$ are measure-theoretically isomorphic
via the map $(x,y)\mapsto(x,y+g(x))$. A cocycle $\psi:X\to \R$ is
a {\em coboundary} if it is cohomologous to the zero cocycle.

Denote by $\overline{G}$ the one point compactification of the
group $G$. An element $g\in \overline{G}$ is said to be an {\em
essential value} of $\psi$, if for each open neighborhood $V_g$ of
$g$ in $\overline{G}$ and an arbitrary set $B\in\mathcal{B}$,
$\mu(B)>0$, there exists  $n\in\Z$ such that
\begin{eqnarray}
\mu(B\cap T^{-n}B\cap\{x\in X:\psi^{(n)}(x)\in V_g\})>0.
\label{val-ess}
\end{eqnarray}
The set of essential values of $\psi$ will be denoted by
$\overline{E}_G(\psi)$ and put
${E}_G(\psi)=G\cap\overline{E}_G(\psi)$. Then ${E}_G(\psi)$ is a
closed subgroup of $G$.

A cocycle $\psi: X \to G$ is recurrent  if for each open
neighborhood $V_0$ of $0$, (\ref{val-ess}) holds for some $n\neq
0$. This is equivalent to the recurrence of the skew product
$T_\psi$ (cf. \cite{Sch}). In the particular case $G \subset \R$
and $\psi:X\to G$ integrable we have that  the recurrence of $\psi$ is
equivalent to $\int_X\psi\,d\mu=0$.

We recall below some properties of $\overline{E}_G(\psi)$ (see
\cite{Sch}).

\begin{proposition}\label{basicessentialvalues}
If $H$ is a closed subgroup of $G$ and $\psi:X\to H$ then
$E_G(\psi)=E_H(\psi)\subset H$. If $\psi_1,\psi_2:X\to G$  are
cohomologous then $\overline{E}_G(\psi_1)=\overline{E}_G(\psi_2)$.
\end{proposition}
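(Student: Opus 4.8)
The plan is to reduce both assertions directly to the definition \eqref{val-ess} of an essential value, using only two elementary facts: that the values of a cocycle with values in a subgroup stay in that subgroup, and that a measurable transfer function is ``almost constant'' on a suitable positive‑measure set.

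For the first claim, I would first show $\overline{E}_G(\psi)\subseteq\overline{H}$, where $\overline{H}$ denotes the closure of $H$ in $\overline{G}$; since $H$ is closed in $G$ this is $H$ itself if $H$ is compact and $H\cup\{\infty\}$ otherwise, and it coincides with the one‑point compactification of $H$. Indeed, if $g\in\overline{G}\setminus\overline{H}$ one can pick an open neighbourhood $V_g$ of $g$ in $\overline{G}$ disjoint from $H$; because $\psi$ takes values in the group $H$ we have $\psi^{(n)}(x)\in H$ for every $n$ and a.e.\ $x$, so $\{x:\psi^{(n)}(x)\in V_g\}$ is null and \eqref{val-ess} fails, i.e.\ $g\notin\overline{E}_G(\psi)$. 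In particular $E_G(\psi)=G\cap\overline{E}_G(\psi)\subseteq G\cap\overline{H}=H$. It then remains to observe that for $g\in H$ the condition \eqref{val-ess} for $\psi$ read as a $G$-cocycle and as an $H$-cocycle is literally the same: a neighbourhood basis of $g$ in $\overline{H}$ is given by the sets $V_g\cap\overline{H}$ with $V_g$ a neighbourhood of $g$ in $\overline{G}$, and since all the values $\psi^{(n)}(x)$ lie in $H$ the requirement $\psi^{(n)}(x)\in V_g$ is equivalent to $\psi^{(n)}(x)\in V_g\cap\overline{H}$. Hence $g\in E_G(\psi)\iff g\in E_H(\psi)$, which gives $E_G(\psi)=E_H(\psi)$.

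For the second claim, write $\psi_1=\psi_2+g-g\circ T$ with $g:X\to G$ measurable, whence $\psi_1^{(n)}(x)=\psi_2^{(n)}(x)+g(x)-g(T^nx)$ for all $n$ and a.e.\ $x$. By symmetry it is enough to prove $\overline{E}_G(\psi_1)\subseteq\overline{E}_G(\psi_2)$. Fix $a\in\overline{E}_G(\psi_1)$, an open neighbourhood $V_a$ of $a$ in $\overline{G}$, and $B\in\mathcal{B}$ with $\mu(B)>0$; I must find $n$ with $\mu(B\cap T^{-n}B\cap\{x:\psi_2^{(n)}(x)\in V_a\})>0$. First choose a relatively compact symmetric neighbourhood $U$ of $0$ in $G$ and a neighbourhood $V_a'$ of $a$ in $\overline{G}$ such that $V_a'+(U-U)\subseteq V_a$ (for $a\in G$ this is continuity of addition; for $a=\infty$, writing $V_a=\overline{G}\setminus K$ with $K$ compact, take $V_a'=\overline{G}\setminus(K+\overline{U-U})$, using that $K+\overline{U-U}$ is compact). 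Next, since $G$ is second countable one can cover $G$ by countably many translates $y_j+U$, and as $\mu(B)>0$ some $g^{-1}(y_j+U)\cap B$ has positive measure; this yields $C\subseteq B$ with $\mu(C)>0$ on which $g$ takes values in a single translate of $U$, so that $g(T^nx)-g(x)\in U-U$ whenever $x,T^nx\in C$. Now apply the hypothesis $a\in\overline{E}_G(\psi_1)$ to the set $C$ and the neighbourhood $V_a'$: there is $n$ with $\mu(C\cap T^{-n}C\cap\{x:\psi_1^{(n)}(x)\in V_a'\})>0$. On this set $\psi_2^{(n)}(x)=\psi_1^{(n)}(x)+(g(T^nx)-g(x))\in V_a'+(U-U)\subseteq V_a$, while $C\cap T^{-n}C\subseteq B\cap T^{-n}B$, which is exactly the required inequality; hence $a\in\overline{E}_G(\psi_2)$.

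I expect the only delicate points to be (i) the localization of the transfer function $g$, that is, extracting a positive‑measure subset of $B$ on which $g$ oscillates within $U-U$ — this is precisely where second countability of $G$ enters — and (ii) the neighbourhood bookkeeping at the point $\infty\in\overline{G}$ in the cohomology‑invariance argument; everything else is a straightforward unwinding of \eqref{val-ess}. Both statements are classical and I would simply refer to \cite{Sch} for the original treatment.
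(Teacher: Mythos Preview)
Your argument is correct: the localization of the transfer function on a positive-measure subset together with the neighbourhood arithmetic at both finite points and $\infty$ is exactly the standard proof, and the first claim reduces, as you say, to the trivial observation that $\psi^{(n)}$ stays in $H$. The paper does not give its own proof of this proposition at all---it simply records the statement with a reference to Schmidt's monograph \cite{Sch}---so your write-up already exceeds what the paper provides, and your closing remark that one would refer to \cite{Sch} matches the paper's treatment exactly.
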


Consider the quotient cocycle $\psi^*:X\to G/{E}(\psi)$ given by
$\psi^*(x)=\psi(x)+{E}(\psi)$. Then
$E_{G/{E}(\psi)}(\psi^*)=\{0\}$. The cocycle $\psi:X\to G$ is
called {\em regular} if $\overline{E}_{G/{E}(\psi)}(\psi^*)=\{0\}$
and {\em non--regular} if
$\overline{E}_{G/{E}(\psi)}(\psi^*)=\{0,\infty\}$. Recall that if
$\psi:X\to G$ is regular then it is cohomologous to a cocycle
$\psi_0:X\to E(\psi)$ such that $E(\psi_0)=E(\psi)$.

The following classical Proposition gives a criterion  to prove
ergodicity and check if a cocycle is a coboundary using essential
values.
\begin{proposition}[see \cite{Sch}]\label{proposition:schmidt}
Suppose that $T:(X,\mu)\to(X,\mu)$ is an ergodic automorphism and
$\psi:X\to G$ be a cocycle for $T$. The skew product
$T_{\psi}:X\times G\to X\times G$ is ergodic if and only if
$E_G(\psi)=G$. The cocycle is a coboundary if and only if
$\overline{E}_G(\psi)=\{0\}$.
\end{proposition}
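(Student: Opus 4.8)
The plan is to follow the classical argument of Schmidt \cite{Sch} (see also \cite{Aa}). Throughout write $\nu=\mu\times m_G$ on $X\times G$ and let $V_g(x,h)=(x,h+g)$ be the fibrewise translation, which commutes with $T_\psi$. The easy direction of the ergodicity statement, that $T_\psi$ ergodic implies $E_G(\psi)=G$, I would prove directly: given $g\in G$, $B\in\mathcal B$ with $\mu(B)>0$, and an open neighbourhood $V_g$ of $g$ in $\overline G$, choose an open $W\ni 0$ in $G$ with $m_G(W)>0$ and $g+(W-W)\subseteq V_g$, and apply ergodicity of $T_\psi$ to the positive-$\nu$-measure sets $B\times W$ and $B\times(g+W)$ to find $n$ with $\nu\big(T_\psi^n(B\times W)\cap(B\times(g+W))\big)>0$; unwinding the definition of $T_\psi^n$, every point of this intersection has the form $(T^nx,h+\psi^{(n)}(x))$ with $x\in B$, $T^nx\in B$, $h\in W$ and $h+\psi^{(n)}(x)\in g+W$, so $\psi^{(n)}(x)\in g+(W-W)\subseteq V_g$, and projecting to $X$ (where $T^n$ is measure preserving) gives $\mu(B\cap T^{-n}B\cap\{\psi^{(n)}\in V_g\})>0$. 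Hence every $g\in G$ is an essential value.

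For the converse, $E_G(\psi)=G\Rightarrow T_\psi$ ergodic, the crux is the claim $(\star)$: \emph{if $g\in E_G(\psi)$ and $A\subseteq X\times G$ is $T_\psi$-invariant, then $V_gA=A$ mod $\nu$.} Granting $(\star)$, when $E_G(\psi)=G$ any invariant $A$ is invariant under all fibrewise translations, hence (by a standard argument using second countability of $G$ and continuity of translation on $L^1_{\mathrm{loc}}$, together with the fact that the only $G$-invariant measurable subsets of $(G,m_G)$ are null or co-null) of the form $A_0\times G$ mod $\nu$ for some $A_0\in\mathcal B$; then $T_\psi$-invariance of $A$ forces $T^{-1}A_0=A_0$ mod $\mu$, and ergodicity of $T$ gives $\mu(A_0)\in\{0,1\}$, so $T_\psi$ is ergodic. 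To prove $(\star)$ I would argue by contradiction: if $V_gA\neq A$ then, up to replacing $A$ by its complement, $E:=A\cap(V_{-g}A)^c$ has $\nu(E)>0$; it is $T_\psi$-invariant as an intersection of invariant sets, and $E\cap V_{-g}E=\varnothing$ because $(V_{-g}A)^c\cap V_{-g}A=\varnothing$, so for $\mu$-a.e.\ $x$ the section $E_x\subseteq G$ is $m_G$-disjoint from $E_x+g$, while $T_\psi$-invariance gives $E_{T^nx}=E_x+\psi^{(n)}(x)$ mod $m_G$ for $\mu$-a.e.\ $x$ and every $n$. Approximating $E$ from inside by a rectangle $B\times U$ (with $U$ bounded open in $G$, and $0\in U$ after a fibrewise translation of $A$, which preserves all of the above), one finds $B'\in\mathcal B$ with $\mu(B')>0$ and $m_G(U\setminus E_x)<\eta\,m_G(U)$ for $x\in B'$; applying $g\in E_G(\psi)$ to $B'$ with a neighbourhood $V_g$ of $g$ tight enough that $U+w$ and $U+g$ $m_G$-agree up to an $\eta$-fraction for all $w\in V_g$, one obtains $n$ and a positive-$\mu$-measure $C\subseteq B'\cap T^{-n}B'$ on which $\psi^{(n)}\in V_g$. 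For $x\in C$ the section $E_{T^nx}$ then fills a $(1-\eta)$-fraction of $U$ (since $T^nx\in B'$), and, being equal to $E_x+\psi^{(n)}(x)$ with $E_x$ filling most of $U$ and $\psi^{(n)}(x)$ close to $g$, also fills most of $U+g$; hence both $E_{T^nx}$ and its translate $E_{T^nx}+g$ fill a large fraction of $U+g$ and so overlap in positive $m_G$-measure, contradicting the disjointness recorded above.

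Finally, for the coboundary equivalence: if $\psi$ is a coboundary then $\overline E_G(\psi)=\overline E_G(0)$ by Proposition~\ref{basicessentialvalues}, and a direct check from \eqref{val-ess} shows $\overline E_G(0)=\{0\}$ (the value $0$ is essential; for $g\neq 0$ a neighbourhood of $g$ avoiding $0$ kills the condition, and the neighbourhood $\{\infty\}\cup(G\setminus\{0\})$ of $\infty$ likewise does); conversely, if $\overline E_G(\psi)=\{0\}$ then $E_G(\psi)=\{0\}$, so $\psi^*=\psi$ and $\psi$ is regular, whence by the quoted property of regular cocycles $\psi$ is cohomologous to a cocycle valued in $E_G(\psi)=\{0\}$, i.e.\ a coboundary. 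The main obstacle is $(\star)$: the real work is converting the essential-value inequality \eqref{val-ess}, which only concerns subsets of the base $X$, into control on the fibrewise sections of an invariant subset of $X\times G$, and the density bookkeeping with the rectangle $B\times U$ and the tight neighbourhood $V_g$ is the one genuinely delicate point (in the applications $G=\mathbb Z$ and this step is immediate since singletons are open).
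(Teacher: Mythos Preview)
The paper does not actually prove this proposition; it is quoted from Schmidt's monograph \cite{Sch} and stated without proof, so there is no ``paper's own proof'' to compare against. Your argument is the standard one from \cite{Sch} (and \cite{Aa}), and it is essentially correct: the easy direction via rectangles in $X\times G$, the key claim $(\star)$ that every essential value fixes each $T_\psi$-invariant set modulo $\nu$ (proved by the density/rectangle argument you outline), and the coboundary equivalence via regularity are all in order. One small cosmetic point: the parenthetical ``after a fibrewise translation of $A$'' to arrange $0\in U$ is unnecessary and slightly confusing---the argument goes through with $U$ as given, and in any case translating $A$ does not literally preserve ``$0\in U$'' but rather shifts which $U$ you are approximating by; it is cleaner simply to drop that clause.
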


We also recall the following characterization of coboundaries.
\begin{proposition}[see \cite{Be-dJu-Le-Ro}]\label{proposition:cocycle}
If $T:(X,\mu)\to(X,\mu)$ is an ergodic automorphism then the
cocycle $\psi:X\to G$  for $T$ is a coboundary if and only if the
skew product $T_\psi:X\times G\to X\times G $  has an invariant
set of positive finite measure.
\end{proposition}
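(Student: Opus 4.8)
The plan is to prove the two implications separately, and it suffices to treat $G=\mathbb{R}$: the case $G=\mathbb{Z}$ is identical with infima replaced by minima, and the general case $G=\mathbb{R}^a\times\mathbb{Z}^b$ follows coordinate by coordinate. (Some restriction on $G$ is in fact needed: if $G$ were compact, then $X\times G$ itself would be an invariant set of positive finite measure whatever $\psi$ is, so the statement is to be understood with $G$ having no non-trivial compact subgroup, which holds in all the applications in this paper.) The ``only if'' direction is immediate from the discussion of cohomologous cocycles above: if $\psi$ is a coboundary then it is cohomologous to the zero cocycle, so there is a measurable $g\colon X\to G$ for which $T_\psi$ is measure-theoretically isomorphic, via $(x,y)\mapsto(x,y+g(x))$, to the product map $T_0(x,y)=(Tx,y)$; choosing any compact neighbourhood $K$ of $0$ in $G$ we have $0<m_G(K)<\infty$, and $X\times K$ is $T_0$-invariant of measure $m_G(K)$, so its image under the isomorphism is a $T_\psi$-invariant set of positive finite measure.

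For the ``if'' direction, suppose $A\subset X\times G$ is $T_\psi$-invariant with $0<(\mu\times m_G)(A)<\infty$. First I would pass to slices: setting $A_x=\{y\in G:(x,y)\in A\}$ and $f(x)=m_G(A_x)$, Fubini gives $f$ measurable with $\int_X f\,d\mu=(\mu\times m_G)(A)\in(0,\infty)$, while invariance of $A$ becomes the slice identity $A_{Tx}=A_x+\psi(x)$ for a.e.\ $x$ (as subsets of $G$ modulo null sets). Translation-invariance of $m_G$ then yields $f\circ T=f$, so by ergodicity of $T$ the function $f$ equals a constant $c$ with $0<c<\infty$. The remaining, and in my view only delicate, step is to manufacture a transfer function $g\colon X\to\mathbb{R}$ with $\psi=g\circ T-g$: the obvious candidates ($\inf A_x$, or the barycentre $\frac1c\int_{A_x}y\,dy$) can fail to be finite when $A_x$ is unbounded or heavy-tailed although $m_G(A_x)=c<\infty$, so instead I would use a median-type position,
\[
g(x):=\inf\bigl\{\,t\in\mathbb{R}\ :\ m_G\bigl(A_x\cap(-\infty,t]\bigr)\ge c/2\,\bigr\}.
\]
Because $0<c<\infty$, the nondecreasing map $t\mapsto m_G(A_x\cap(-\infty,t])$ increases from $0$ (as $t\to-\infty$) to $c$ (as $t\to+\infty$) by continuity of $m_G$ from above and from below, so $g(x)$ is a finite real; writing the infimum over rationals and using joint measurability of $(x,t)\mapsto m_G(A_x\cap(-\infty,t])$ shows $g$ is measurable. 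Substituting $A_{Tx}=A_x+\psi(x)$ into the definition gives $g(Tx)=g(x)+\psi(x)$, i.e.\ $\psi=g\circ T-g$ is cohomologous to the zero cocycle, hence a coboundary.

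Alternatively, the ``if'' direction can be routed through essential values: the slice identity shows that every $h\in E_G(\psi)$ satisfies $A_x+h=A_x$ for a.e.\ $x$ (modulo null sets), so $E_G(\psi)$ is a closed subgroup of $G$ admitting an invariant set of positive finite measure, hence compact, hence trivial for the groups $G$ considered here; after separately excluding $\infty$ from $\overline{E}_G(\psi)$ one obtains $\overline{E}_G(\psi)=\{0\}$ and concludes by Proposition~\ref{proposition:schmidt}. I would nonetheless favour the direct construction above, since it avoids the extra argument needed to rule out $\infty$.
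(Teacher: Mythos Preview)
The paper does not give its own proof of this proposition; it is quoted from \cite{Be-dJu-Le-Ro} and used as a black box (only with $G=\Z$, in the proof of Theorem~\ref{inv_fin_meas}). So there is nothing in the paper to compare against.

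Your argument is correct and self-contained. A few remarks. Your caveat that $G$ must have no non-trivial compact subgroup is exactly right and worth stating, since the proposition as written is literally false for compact $G$; in this paper only $G=\Z$ is used, so the issue never arises. The ``only if'' direction is fine (modulo the harmless sign convention: the paper writes coboundaries as $g-g\circ T$, you wrote $g\circ T-g$, which just swaps $g$ for $-g$). For the ``if'' direction, your median construction
\[
g(x)=\inf\{t\in\R:\ m_G(A_x\cap(-\infty,t])\ge c/2\}
\]
is the clean way to do it: the slice identity $A_{Tx}=A_x+\psi(x)$ (mod null) gives the required equivariance, finiteness follows from $0<c<\infty$, and measurability is routine. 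For $G=\R^a\times\Z^b$ the ``coordinate by coordinate'' reduction works, but note that to extract the $i$-th transfer function you should take the median of the pushforward of $m_G|_{A_x}$ to the $i$-th factor (a finite measure on $\R$ or $\Z$), not a slice; your one-line justification elides this but the argument goes through.

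Your alternative via essential values is also viable. The step you flag as needing extra work, excluding $\infty$ from $\overline{E}_G(\psi)$, can be handled by observing that the existence of a $T_\psi$-invariant set of positive finite measure forces the cocycle to be recurrent along returns to a suitable base set of positive measure, which rules out $\infty$; but you are right that the direct median construction avoids this detour and is preferable.
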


\subsection{Ergodic decomposition and Mackey action}\label{Mackey}
If the skew product $T_\psi:X\times G\to X\times G$ is not ergodic
then the structure of its ergodic components (defined below) can be studied by
looking at properties of the so called Mackey action.

Let $(\tau_g)_{g\in G}$ denote the $G$-action on $(X\times
G,\mathcal{B}\times\mathcal{B}_G,\mu\times m_G)$ given by
$\tau_g(x, h) = (x, h+g)$. Then $(\tau_g)_{g\in G}$ commutes with
the skew product $T_\psi$. Fix a probability Borel measure $m$ on
$G$ equivalent to the Haar measure $m_G$. Then the probability
measure $\mu\times m$ is quasi-invariant under $T_\psi$ and
$(\tau_g)_{g\in G}$, i.e.~$(T_\psi)_*(\mu\times m)$ and
$(\tau_g)_*(\mu\times m) $ for any $g\in G$ are equivalent to
$\mu\times m$ (or, in other words, $T_\psi$ and $(\tau_g)_{g\in
G}$ are non-singular actions on $(X\times
G,\mathcal{B}\times\mathcal{B}_G,\mu\times m)$). Denote by
$\mathcal{I}_\psi\subset\mathcal{B}\times\mathcal{B}_G$ the
$\sigma$-algebra  of $T_\psi$-invariant subsets. Since $(X\times
G,\mathcal{B}\times\mathcal{B}_G,\mu\times m)$ is a standard
probability Borel space, the quotient space $((X\times
G)/\mathcal{I}_\psi,\mathcal{I}_\psi,\mu\times
m|_{\mathcal{I}_\psi})$ is well-defined (and is also standard).
This space is called the \emph{space of ergodic components} and it
will be denoted by $(Y, \mathcal{C}, \nu)$. Since $(\tau_g)_{g\in
G}$ preserves $\mathcal{I}_\psi$ it also acts on $(Y, \mathcal{C},
\nu)$. This non-singular $G$-action is called the \emph{Mackey
action} (and is denoted by $(\tau^\psi_g)_{g\in G}$) associated to
the skew product $T_\psi$, and it is always ergodic. Moreover,
there exists a measurable map $Y\ni y\mapsto \overline{\mu}_y$
taking values in the space of probability measures on $(X\times
G,\mathcal{B}\times\mathcal{B}_G)$ such that
\begin{itemize}
\item $\mu\times m=\int_{Y}\overline{\mu}_y\,d\nu(y)$;
\item $\overline{\mu}_y$ is quasi-invariant and ergodic under $T_\psi$ for
$\nu$-a.e. $y\in Y$;
\item $\overline{\mu}_y$ is equivalent to
a $\sigma$-finite measure ${\mu}_y$ invariant under $T_\psi$.
\end{itemize}
Then $T_\psi$ on $(X\times
G,\mathcal{B}\times\mathcal{B}_G,{\mu}_y)$ for $y\in Y$ are called
\emph{ergodic components} of $T_\psi$.
\begin{theorem}[\cite{Sch,Zi}]\label{thm_ergdeco}
Suppose that $T:(X,\mu)\to(X,\mu)$ is ergodic and let $\psi:X\to
G$ be a cocycle. Then
\begin{itemize}
\item[(i)] $\psi$ is recurrent if and only if
the measure $\mu_y$ is continuous for $\nu$-a.e. $y\in Y$;
\item[(ii)]  $\psi$ is non-recurrent if and only if
 $\mu_y$ is purely atomic for $\nu$-a.e. $y\in Y$;
\item[(iii)] $\psi$ is regular if and only if the Mackey
action $(\tau^\psi_g)_{g\in G}$ is strictly transitive, i.e.\ the
measure $\nu$ is supported on a single orbit of
$(\tau^\psi_g)_{g\in G}$.
\end{itemize}
\end{theorem}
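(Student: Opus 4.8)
The plan is to deduce Theorem~\ref{thm_ergdeco} from the standard machinery of nonsingular ergodic theory and Mackey actions developed in \cite{Sch, Zi}; the only input special to the skew--product situation is the following dichotomy, which uses ergodicity of the base: \emph{either $T_\psi$ is conservative, or $T_\psi$ is totally dissipative}. To see this, I would recall that the Hopf decomposition $X\times G=C\sqcup D$ into conservative and dissipative parts is invariant under every nonsingular automorphism commuting with $T_\psi$, in particular under each $\tau_g$; hence $C$ and $D$ are invariant under the joint action of $T_\psi$ together with $(\tau_g)_{g\in G}$. A set invariant under all $\tau_g$ has the form $A\times G$, and if it is moreover $T_\psi$-invariant then $TA=A$, so by ergodicity of $T$ it is null or conull; thus $C$ is null or conull. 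Throughout I assume $T$ aperiodic (automatic in the nonatomic case, which is the one occurring in the applications), and I use the fact, recalled before Proposition~\ref{basicessentialvalues}, that $\psi$ is recurrent precisely when $T_\psi$ is conservative.

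For (i) and (ii) I would argue as follows. If $\psi$ is recurrent, so $T_\psi$ is conservative, then every ergodic component $(X\times G,\mu_y,T_\psi)$ is conservative, $\sigma$-finite, invariant and ergodic; if $\mu_y$ had an atom $\{z\}$, invariance would make $\{T_\psi^n z\}$ an atom of equal mass for every $n$, ergodicity would make $\bigcup_n\{T_\psi^n z\}$ conull, and conservativity applied to the positive--measure set $\{z\}$ would force $z$ to return to itself, so its orbit is a finite periodic orbit which is conull; projecting to $X$ this contradicts aperiodicity of $T$. Hence $\mu_y$ is continuous for $\nu$-a.e.\ $y$. If instead $\psi$ is not recurrent, then $T_\psi$ is totally dissipative, the orbit partition is measurable, the ergodic components are (mod null) the individual orbits $\{T_\psi^n z:n\in\Z\}$, and on each of them the invariant measure is a multiple of counting measure, hence purely atomic, for $\nu$-a.e.\ $y$. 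Combining these two implications with the dichotomy --- exactly one of ``recurrent'' and ``totally dissipative'' holds, hence exactly one of ``$\mu_y$ continuous $\nu$-a.e.'' and ``$\mu_y$ purely atomic $\nu$-a.e.'' holds --- yields both equivalences.

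For (iii) I would prove both directions. \emph{Regular $\Rightarrow$ strictly transitive:} set $H:=E(\psi)$; by regularity $\psi$ is cohomologous to some $\psi_0\colon X\to H$ with $E_H(\psi_0)=H$, and since cohomologous cocycles have $G$-equivariantly isomorphic skew products, hence isomorphic spaces of ergodic components and Mackey actions, I may replace $\psi$ by $\psi_0$. For each coset $c\in G/H$ the set $X\times c$ is $T_{\psi_0}$-invariant, and $T_{\psi_0}$ restricted to it is isomorphic to $T_{\psi_0}\colon X\times H\to X\times H$, which is ergodic because $E_H(\psi_0)=H$ (Proposition~\ref{proposition:schmidt}); so the ergodic components are exactly the $X\times c$, the space $(Y,\nu)$ is identified with $G/H$, and the Mackey action becomes translation on $G/H$, which is a single orbit. \emph{Strictly transitive $\Rightarrow$ regular:} if $\nu$ sits on one orbit then, all stabilizers along it coinciding since $G$ is abelian, $(Y,\nu)$ is measurably $G/H$ for a closed subgroup $H$, with the Mackey action being translation; let $\pi\colon X\times G\to G/H$ be the associated $T_\psi$-invariant decomposition map. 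Equivariance under $\tau$ gives $\pi(x,y)=y+\phi(x)$ with $\phi(x):=\pi(x,0)$, and $T_\psi$-invariance of $\pi$ then forces $\psi(x)+\phi(Tx)-\phi(x)\in H$ a.e. Choosing a measurable lift $\widetilde\phi\colon X\to G$ of $\phi$ and setting $g:=-\widetilde\phi$, the cocycle $\psi_0:=\psi+g-g\circ T$ is $H$-valued and cohomologous to $\psi$; transporting the decomposition through the isomorphism of $T_\psi$ with $T_{\psi_0}$ shows that the components of $T_{\psi_0}$ are the sets $X\times(y+H)$, so $T_{\psi_0}\colon X\times H\to X\times H$ is ergodic, whence $E_H(\psi_0)=H$ (Proposition~\ref{proposition:schmidt}) and $E(\psi)=E(\psi_0)=H$ (Proposition~\ref{basicessentialvalues}); finally the quotient cocycle $\psi^*\colon X\to G/H$ is cohomologous to $\psi_0^*\equiv 0$, hence is a coboundary, so $\overline E_{G/H}(\psi^*)=\{0\}$, i.e.\ $\psi$ is regular.

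The main obstacle is not the cocycle bookkeeping above but the measure--theoretic foundations on which it rests: that the space of ergodic components $(Y,\mathcal C,\nu)$ is a standard Borel space carrying a $T_\psi$-invariant, $G$-equivariant decomposition map; that a single--orbit ergodic nonsingular action of a locally compact second countable abelian group is measurably isomorphic to a homogeneous space $G/H$ with $H$ closed; and the existence of measurable sections such as $\widetilde\phi$. These are exactly the standard--space and descriptive set--theoretic ingredients of \cite{Sch, Zi}, which I would invoke rather than reprove.
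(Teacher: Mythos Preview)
The paper does not supply a proof of Theorem~\ref{thm_ergdeco}; it is quoted as a background result from \cite{Sch,Zi} and used as a black box (notably in Corollary~\ref{corol_nonregul}). So there is nothing in the paper to compare your argument against.

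That said, your outline is the standard one and is essentially correct. The conservative/dissipative dichotomy for $T_\psi$ via invariance of the Hopf decomposition under the commuting $\tau_g$-action and ergodicity of the base is exactly how one reduces (i)--(ii) to the two cases; your treatment of the conservative case (an atom forces a finite periodic orbit, contradicting aperiodicity of $T$) and the dissipative case (components are orbits with counting measure) is right. For (iii), both directions follow the lines in Schmidt's monograph: the forward direction uses the reduction of a regular cocycle to an $E(\psi)$-valued one and Proposition~\ref{proposition:schmidt}; the converse uses the identification of a strictly transitive abelian action with a homogeneous space $G/H$, equivariance to extract $\phi$, and a measurable section to produce the transfer function. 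The only places one has to be careful are precisely the foundational points you flag at the end (standardness of the quotient, existence of Borel sections, closedness of stabilizers), and those are handled in \cite{Sch,Zi} as you note.
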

If $\psi$ is not recurrent then almost every ergodic component
$T_\psi:(X\times G, \mu_y)\to (X\times G, \mu_y)$ is trivial, i.e.
it is strictly transitive.

If $\psi$ is regular then the structure of ergodic components is
trivial, i.e. if we fix  one ergodic component then every other
ergodic component is the image of the fixed component by a
transformation $\tau_g$. In particular, all ergodic components are
isomorphic.

As a immediate consequence of Theorem~\ref{thm_ergdeco} we obtain
that if a cocycle is recurrent and non-regular then the structure
of ergodic components of the skew product and the dynamics inside
ergodic components are highly non-trivial.

\begin{corollary}\label{corol_nonregul}
Let $T:(X,\mu)\to(X,\mu)$ be an ergodic automorphism and
$\psi:X\to\Z$ is a recurrent non-regular cocycle. Then the
measures $\nu$ and $\mu_y$ for $\nu$-a.e. $y\in Y$ are continuous.
In particular, the skew product $T_\psi$ has uncountably  many
ergodic components and almost every ergodic component is not
supported by a countable set.
\end{corollary}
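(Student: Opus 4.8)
The plan is to derive the whole statement from Theorem~\ref{thm_ergdeco}, feeding in the two hypotheses on $\psi$ in complementary ways: recurrence will yield the continuity of the fibre measures $\mu_y$, non-regularity will yield the continuity of the base measure $\nu$, and the two ``uncountably many'' assertions are then soft consequences of the fact that a continuous measure lives on an uncountable subset of a standard Borel space. So the corollary is essentially a repackaging of the ergodic decomposition machinery recalled just above.

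First, since $\psi$ is recurrent, part~(i) of Theorem~\ref{thm_ergdeco} gives directly that $\mu_y$ is continuous (i.e. non-atomic) for $\nu$-a.e.\ $y\in Y$. The substantive point is the continuity of $\nu$, and here I would argue by contradiction. Suppose $\nu$ had an atom $y_0\in Y$, so $\nu(\{y_0\})>0$. The Mackey action $(\tau^\psi_g)_{g\in\Z}$ acts by non-singular automorphisms of $(Y,\mathcal{C},\nu)$, hence the orbit $O:=\{\tau^\psi_g y_0:g\in\Z\}$ is a $\tau^\psi$-invariant measurable set; it is at most countable because $\Z$ is countable, and $\nu(O)\ge\nu(\{y_0\})>0$. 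Since the Mackey action is ergodic, this forces $\nu(Y\setminus O)=0$, i.e. $\nu$ is supported on the single orbit $O$, which by definition means the Mackey action is strictly transitive. By part~(iii) of Theorem~\ref{thm_ergdeco} this says precisely that $\psi$ is regular, contradicting the hypothesis that $\psi$ is non-regular. Therefore $\nu$ has no atoms, i.e. $\nu$ is continuous.

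It then remains to read off the ``in particular'' claims. Because $\nu$ is a continuous probability measure on the standard Borel space $Y$, no countable set carries $\nu$; since the ergodic components $T_\psi\colon(X\times\Z,\mu_y)\to(X\times\Z,\mu_y)$ are parametrized (mod $\nu$-null sets) by the points $y\in Y$, there must be uncountably many of them. Likewise, for $\nu$-a.e.\ $y$ the measure $\mu_y$ is a non-zero $\sigma$-finite and non-atomic measure, so every countable subset of $X\times\Z$, being a countable union of singletons, is $\mu_y$-null; hence no countable set carries $\mu_y$, which is the statement that almost every ergodic component is not supported by a countable set. I do not expect a serious obstacle here: the only slightly delicate step is the one above, namely using ergodicity of the Mackey action together with the countability of $\Z$ to upgrade a single atom of $\nu$ into a single $\nu$-conull orbit and thereby invoke part~(iii) of Theorem~\ref{thm_ergdeco}.
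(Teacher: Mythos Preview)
Your proof is correct and follows essentially the same approach as the paper: both use part~(i) of Theorem~\ref{thm_ergdeco} for the continuity of $\mu_y$, and both derive the continuity of $\nu$ by arguing that an atom would force $\nu$ to be supported on a single $\Z$-orbit, contradicting non-regularity via part~(iii). The paper phrases the latter step through the standard dichotomy ``an ergodic measure under a $\Z$-action is either continuous or purely discrete,'' while you argue directly from a single atom; the two arguments are interchangeable.
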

\begin{proof} Since the measure $\nu$ is ergodic for the
the Mackey $\Z$-action, it is either continuous or purely
discrete. If $\nu$ is discrete then, by ergodicity, $\nu$ is
supported by a single orbit,  in contradiction with $(iii)$.
Consequently, $\nu$ is continuous. The continuity of almost every
measure $\mu_y$ follows directly from $(i)$. The second part of
the corollary is a direct consequence of the continuity of these
measures.
\end{proof}

\begin{remark}
Let $(\widetilde{\varphi}^\theta_t)_{t\in\R}$ be a directional
flows on a $\Z$-cover $(\widetilde{M},\widetilde{\omega})$ of
$(M,\omega)$ such that the flow $({\varphi}^\theta_t)_{t\in\R}$ on
$(M,\omega)$ is ergodic. Suppose that its Poincar\'e return map is
isomorphic to a skew product $T_\psi:I\times\Z\to I\times \Z$ (as
in Section~\ref{reduction:sec}) and the cocycle $\psi$ is
recurrent and non-regular. Then the flow
$(\widetilde{\varphi}^\theta_t)_{t\in\R}$ is not ergodic and it
has uncountably many ergodic components and almost every such
ergodic component is not supported on a single orbit of the flow.
\end{remark}

\subsection{Cocycles for flows}
Let $(\varphi_t)_{t\in\R}$ be a Borel flow on a standard
probability Borel space $\xbm$. A \emph{cocycle} for
the flow $(\varphi_t)_{t\in\R}$ is a Borel function
$F:\R\times X\to\R$ such that
\[F(t+s,x)=F(t,\varphi_sx)+F(s,x)\text{ for all }s,t\in\R\text{ and }x\in X.\]

\begin{definition}Two cocycles $F_1,F_2:\R\times X\to\R$ are called
{\em cohomologous} if there exists a Borel function $u:X\to\R$ and
a Borel $(\varphi_t)_{t\in\R}$-invariant subset $X_0\subset X$
with $\mu(X_0)=1$ such that
\[F_2(t,x)=F_1(t,x)+u(x)-u(\varphi_tx)\text{ for all }x\in X_0\text{ and }t\in\R.\]
A cocycle $F:\R\times X\to\R$ is said to be a {\em cocycle} if it
is cohomologous to the zero cocycle.
\end{definition}

\begin{remark}\label{coboundarycondition}
Let us recall a simple condition on a cocycle $F$ guaranteeing
that it is a coboundary: if there exist a Borel
$(\varphi_t)_{t\in\R}$-invariant subset $X_0\subset X$ with
$\mu(X_0)=1$ such that the map $\R_+\ni t\mapsto F(t,x)\in\R$ is
continuous and bounded for every $x\in X_0$ then $F$ is a
coboundary. Moreover, the transfer function $u:X\to\R$ is given by
\[u(x):=\limsup_{s\to+\infty}F(s,x)=\limsup_{s\in\Q,\,s\to+\infty}F(s,x)\quad\text{ for }\quad x\in X_0.\]
Indeed, for every $t\geq 0$ and $x\in X_0$ we have
\[u(\varphi_tx)=\limsup_{s\to+\infty}F(s,\varphi_tx)=\limsup_{s\to+\infty}F(s+t,x)-F(t,x)=u(x)-F(t,x).\]
\end{remark}

\subsubsection*{Cocycles for translation flows.}
Let $(M,\omega)$ be a compact translation surface and  let
$\theta\in S^1$. For every $x\in M\setminus \Sigma$ denote by
$I^\theta(x)\subset \R$ the maximal open interval for which
$\varphi^\theta_tx$ is well defined whenever $t\in
I^\theta(x)\subset \R$. If $x\in M_\theta$ then $I^\theta(x)=\R$.
For any smooth bounded function $f:M\setminus\Sigma\to\R$ let
\begin{equation}\label{Fdef}
F^\theta_f(t,x):=\int_0^tf(\varphi^\theta_sx)\,ds\text{ if }t\in I^\theta(x).
\end{equation}
Thus $F^\theta_f$ is well defined on $\R\times M_\theta$ and it is
a cocycle for the directional flow $(\varphi^\theta_t)_{t\in\R}$
considered on $(M_\theta,\nu_\omega)$.

Assume that the directional flow $(\varphi^\theta_t)_{t\in\R}$  is
minimal and let $I_\theta \subset M$ be an interval transverse to
$(\varphi^\theta_t)_{t\in\R}$. The first return (Poincar{\'e}) map
of $(\varphi^\theta_t)_{t\in\R}$ to $I_\theta $ is an interval
exchange transformation $T_\theta$. Let $\psi_f^\theta:I\to\R$ be the
cocycle for $T_\theta$ defined as follows. Let $\tau: I_\theta \to
\R^+$ be the piecewise constant function which gives the first
return time $\tau(x)$ of $x $ to $I_\theta$ under the flow
$(\varphi^\theta_t)_{t\in\R}$. Then
\[\psi^\theta_f(x)= F^\theta_f(\tau(x) , x) =
\int_0^{\tau(x)}f(\varphi^\theta_s x)\,ds,  \qquad x \in I_\theta.\]
The following standard equivalence holds (see for example \cite{Fr-Ulc1}).
\begin{lemma}\label{equivalentcoboundaries}
The cocycle  $F^\theta_f$ is a coboundary for the flow
$(\varphi^\theta_t)_{t\in\R}$ if and only if the cocycle
$\psi^\theta_f$ is a coboundary for the interval exchange
transformation $T_\theta$.
\end{lemma}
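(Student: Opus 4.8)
The plan is to establish the equivalence directly from the definitions, exploiting the fact that the special flow representation of $(\varphi^\theta_t)_{t\in\R}$ over $T_\theta$ with roof $\tau$ identifies $\psi^\theta_f$ as the restriction of the flow cocycle $F^\theta_f$ to the return time. First I would recall (or cite from \cite{ViB, YoLN}) that, since the directional flow is minimal, $(\varphi^\theta_t)_{t\in\R}$ on $(M_\theta, \nu_\omega)$ is measure-theoretically isomorphic to the special flow built over the IET $T_\theta\colon I_\theta\to I_\theta$ under the roof function $\tau$, and that under this isomorphism the cocycle $F^\theta_f$ corresponds to the special-flow cocycle determined over the base by $\psi^\theta_f$; concretely, for $(x,s)$ in the special flow domain (so $0\le s<\tau(x)$) one has, by the cocycle identity $F^\theta_f(t+s,x)=F^\theta_f(t,\varphi^\theta_s x)+F^\theta_f(s,x)$, that the value of $F^\theta_f$ along an orbit segment from the base to the $n$-th return is exactly $(\psi^\theta_f)^{(n)}(x)=\sum_{j=0}^{n-1}\psi^\theta_f(T_\theta^j x)$. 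This is the key link between the two cocycles.

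Next I would prove the two implications. For the direction ``$\psi^\theta_f$ coboundary $\Rightarrow$ $F^\theta_f$ coboundary'': suppose $\psi^\theta_f = g - g\circ T_\theta$ for a Borel $g\colon I_\theta\to\R$. Define $u\colon M_\theta\to\R$ on the special flow model by $u(x,s):=g(x) + F^\theta_f(s,x)$, i.e.\ transport $g$ along flow lines using the flow cocycle, and check that $u$ is a Borel function on a full-measure invariant set and that $F^\theta_f(t,\cdot) = u - u\circ\varphi^\theta_t$; the verification reduces, via the cocycle identity for $F^\theta_f$ and telescoping of $(\psi^\theta_f)^{(n)}$, to the assumed coboundary equation for $\psi^\theta_f$. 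For the converse ``$F^\theta_f$ coboundary $\Rightarrow$ $\psi^\theta_f$ coboundary'': if $F^\theta_f(t,x) = u(x) - u(\varphi^\theta_t x)$ on a full-measure invariant $X_0$, restrict $u$ to the transversal $I_\theta$ by setting $g := u|_{I_\theta}$ (using that $I_\theta$ meets a.e.\ flow orbit, so $g$ is defined $\mu_{I_\theta}$-a.e., where $\mu_{I_\theta}$ is the induced measure on $I_\theta$), and evaluate the coboundary identity at $t = \tau(x)$: since $\varphi^\theta_{\tau(x)} x = T_\theta x$, we get $\psi^\theta_f(x) = F^\theta_f(\tau(x),x) = g(x) - g(T_\theta x)$, i.e.\ $\psi^\theta_f$ is a coboundary for $T_\theta$. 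Measurability of $g$ follows from that of $u$ together with the fact that $I_\theta\hookrightarrow M$ is a Borel embedding.

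The main obstacle, and the only point requiring care rather than routine bookkeeping, is the measure-theoretic matching: one must ensure that the full-measure invariant set $X_0\subset M_\theta$ on which the $F^\theta_f$-coboundary equation holds intersects $\mu_{I_\theta}$-almost every fiber of the transversal in a full-measure set, so that $g = u|_{I_\theta}$ is well-defined a.e.\ and the return-time identity can be applied $\mu_{I_\theta}$-a.e. This is exactly where one uses the disintegration of $\nu_\omega$ (restricted to the special-flow domain) as $\mu_{I_\theta}\otimes \mathrm{Leb}$ along flow lines: a Fubini argument shows $X_0$ has full measure in a.e.\ fiber, hence its base trace $\{x\in I_\theta : (x,s)\in X_0 \text{ for Leb-a.e. } s\}$ has full $\mu_{I_\theta}$-measure and is $T_\theta$-invariant. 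Given this, both implications are immediate, and one remarks (as in \cite{Fr-Ulc1}) that continuity/boundedness of $t\mapsto F^\theta_f(t,x)$ from Remark \ref{coboundarycondition} can alternatively be invoked to produce the transfer function $u$ explicitly in the first implication.
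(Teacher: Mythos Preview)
Your proof is correct and is exactly the standard direct argument. The paper does not actually supply a proof of this lemma: it simply states the equivalence as ``standard'' and refers to \cite{Fr-Ulc1}, so there is nothing to compare beyond noting that your write-up matches the expected special-flow computation (modulo an inessential sign choice in the definition of $u$, which only flips the transfer function).
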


\begin{remark}\label{remark:cocycle-hom-cohom}
For every smooth closed form $\rho\in\Omega^1(M)$ let us consider
the smooth bounded function $f:M\setminus\Sigma\to\R$,
$f=i_{X_{\theta}}\rho$ and let $\psi_\rho:I\to\R$ be the
corresponding cocycle for $T$ defined by
$\psi_\rho(x)=\int_0^{\tau(x)}f(\varphi^\theta_sx)\,ds$. Let
$\gamma:=\mathcal{P}^{-1} [\rho] \in H_1(M,\R)$, where
$\mathcal{P}:H_1(M,\R)\to H^1(M,\R)$ the Poincar\'e duality, see
\eqref{def_duality} for definition. Then, recalling the
definitions of $\gamma_\alpha$, $v_x$ and $[x,Tx]$ in
\S\ref{reduction:sec} and applying \eqref{prop_duality}, for every
$x\in I_\alpha$
\begin{equation*}
 \langle  \gamma_\alpha ,\gamma \rangle = \int_{\gamma_\alpha} \rho =
 \int_{v_x}\rho =
 \int_{0}^{\tau(x)}i_{X_\theta}\rho(\varphi^\theta_sx)\,ds+\int_{[Tx,x]}\rho
 =\psi_\rho(x)+g(x)-g(Tx),
\end{equation*}
where $g:I\to\R$ is given by $g(x)=\int_{[x_0,x]}\rho$ ($x_0$ is
the left endpoint of the interval $I$). Consequently, denoting by
$\psi_\gamma:I\to\R$ the cocycle $\psi_\gamma(x)=\langle \gamma,
\gamma_\alpha \rangle$ if $x\in I_\alpha$ for
$\alpha\in\mathcal{A}$, we conclude that the cocycle
$\psi_\rho+\psi_\gamma$ is a coboundary.
\end{remark}

\section{The Teichm\"uller flow and the Kontsevich-Zorich cocycle}\label{Teich:sec}
Given  a connected oriented surface $M$
and a discrete countable set  $\Sigma\subset M$, denote by
$\operatorname{Diff}^+(M,\Sigma)$ the group of
orientation-preserving homeomorphisms of $M$ preserving $\Sigma$.
Denote by $\operatorname{Diff}_0^+(M,\Sigma)$ the subgroup of
elements $\operatorname{Diff}^+(M,\Sigma)$ which are isotopic to
the identity. Let us denote by
$\Gamma(M,\Sigma):=\operatorname{Diff}^+(M,\Sigma)/\operatorname{Diff}_0^+(M,\Sigma)$
 the {\em mapping-class} group.
We will denote by $\mathcal{Q}(M)$ (respectively
$\mathcal{Q}^{(1)}(M)$ ) the {\em Teichm\"uller space of Abelian
differentials } (respectively of unit area Abelian differentials),
that is the space of orbits of the natural action of
$\operatorname{Diff}_0^+(M,\emptyset)$ on the space of all Abelian
differentials on $M$ (respectively, the ones with total area
$A(\omega)=\int_M\Re(\omega)\wedge \Im(\omega) =1$). We will
denote by $\mathcal{M}(M)$ ($\mathcal{M}^{(1)}(M)$) the {\em
moduli space of (unit area) Abelian differentials}, that is the
space of orbits of the natural action of
$\operatorname{Diff}^+(M,\emptyset)$ on the space of (unit area)
Abelian differentials on $M$.
Thus  $\mathcal{M}(M)=\mathcal{Q}(M)/\Gamma(M,\emptyset)$ and
$\mathcal{M}(M)^{(1)}=\mathcal{Q}^{(1)}(M)/\Gamma(M,\emptyset)$.

The group $SL(2,\R)$ acts naturally on  $\mathcal{Q}^{(1)}(M)$ and
$\mathcal{M}^{(1)}(M)$ by postcomposition on the charts defined by
local primitives of the holomorphic $1$-form. We will denote by
$g\cdot \omega$ the new Abelian differential obtained acting by $g
\in SL(2,\R)$ on $\omega$. The {\em Teichm\"uller flow}
$(G_t)_{t\in\R}$ is the restriction of this action to the diagonal
subgroup $(\operatorname{diag}(e^t,e^{-t}))_{t\in\R}$ of
$SL(2,\R)$ on $\mathcal{Q}^{(1)}(M)$ and $\mathcal{M}^{(1)}(M)$.
Remark that the $SL(2,\R)$ action preserves the zeros of $\omega$
and their degrees.

Let  $M$ be compact and of genus $g$ and let $\kappa$ be the
number of zeros of $\omega$.  If $k_i$, $1\leq i\leq \kappa$ is
the degrees of each zero, one has $2g-2 = \sum_{i=1}^\kappa k_i$.
Let us denote by $\mathcal{H} (\underline{k}) =\mathcal{H} (k_1,
\dots, k_\kappa) $ the \emph{stratum} consisting of all $(M,
\omega)$ such that $\omega$ has $\kappa$ zeros of degrees $k_1,
\dots, k_\kappa$.  Each stratum is invariant under the $SL(2,\R)$
action and the connected components of this action were classified
in \cite{Ko-Zo}. Let $\mathcal{H}^{(1)} (\underline{k}) =
\mathcal{H} (\underline{k}) \cap \mathcal{M}^{(1)}(M)$. Each
stratum $\mathcal{H}^{(1)}=\mathcal{H}^{(1)} (\underline{k}) $
carries a canonical $SL(2,\R)$-invariant measure
$\mu_\mathcal{H}^{(1)}$ that can defined as follows. Let
$\{\gamma_1, \dots, \gamma_{n}\}$ be a basis of the
\emph{relative} homology $H_1(M, \Sigma, \Z)$. Remark that for
each $\gamma_i$, $\int_{\gamma_i} \omega \in \mathbb{C} \approx
\R^2$. The \emph{relative periods} $(\int_{\gamma_1}\omega, \dots,
\int_{\gamma_1}\omega) \in \R^{2n}$ are local coordinates on the
stratum $\mathcal{H} (\underline{k}) $. Consider the pull-back by
the relative periods of the Lebesgue measure on $\R^{2n}$. This
measure induces a conditional measure on the hypersurface
$\mathcal{H}^{(1)} (\underline{k})\subset \mathcal{H}
(\underline{k})$.  Since this measure is finite (see \cite{Ma:IET,
Ve1}), we can  renormalize it to get a probability measure that we
will denote by $\mu_{\mathcal{H}}^{(1)}$. The measure
$\mu_{\mathcal{H}}^{(1)}$ is $SL(2,\R)$-invariant and ergodic for
the Teichm\"uller flow.

\subsubsection*{The Kontsevich-Zorich cocycle.}
Assume that $M$ is compact.  The {\em
Kontsevich-Zorich cocycle} $(G^{KZ}_t)_{t\in\R}$ is the quotient
of the trivial cocycle
\[G_t\times\operatorname{Id}:\mathcal{Q}^{(1)}(M)\times H^1(M,\R)\to\mathcal{Q}^{(1)}(M)\times H^1(M,\R)\]
by the action of the mapping-class group
$\Gamma(M):=\Gamma(M,\emptyset)$. The mapping class group acts on
the fiber $H^1(M,\R)$ by pullback. The cocycle
$(G^{KZ}_t)_{t\in\R}$ acts on the cohomology vector bundle
\[\mathcal{H}^1(M,\R)=(\mathcal{Q}^{(1)}(M)\times H^1(M,\R))/\Gamma(M)\]
(known as \emph{Hodge bundle}) over the Teichm\"uller flow
$(G_t)_{t \in \R}$ on the moduli space
$\mathcal{M}^{(1)}(M)=\mathcal{Q}^{(1)}(M)/\Gamma(M)$. We will
denote by $H^1(M_{\omega},\R)$ the fiber at $\omega$. Clearly
$H^1(M_{\omega},\R)=H^1(M,\R)$.   The space $H^1(M,\R)$ is endowed
with symplectic (intersaction) form given by
\[\langle c_1,c_2\rangle := 
\int_M c_1\wedge c_2
\text{ for }c_1,c_2\in H^1(M,\R).\] 
This symplectic structure  is preserved by the action of the
mapping-class group and
hence is invariant under the action of $SL(2,\R)$.

Denote by $\mathcal{P}:H_1(M,\R)\to H^1(M,\R)$ the Poincar\'e
duality, i.e.
\begin{equation}\label{def_duality}
\mathcal{P}\sigma=c\quad\text{ iff } \quad\int_\sigma c'=\langle
c,c'\rangle \quad\text{ for all }\quad c'\in
H^1(M,\R).
\end{equation}
Since the Poincar{\'e} duality
$\mathcal{P}:H_1(M,\R)\to H^1(M,\R)$ intertwines the intersection
forms $\langle \,\cdot\, , \,\cdot\,\rangle $ on $H_1(M, \R)$ and
$ H^1(M,\R)$ respectively, that is $\langle \sigma,\sigma'\rangle
=\langle \mathcal{P}\sigma,\mathcal{P}\sigma'\rangle $ for all
$\sigma,\sigma' \in H_1(M, \R)$, we have
\begin{equation}\label{prop_duality}
\langle \sigma,\sigma'\rangle =\langle
\mathcal{P}\sigma,\mathcal{P}\sigma'\rangle=\int_\sigma
\mathcal{P}\sigma'\quad\text{ for all }\quad \sigma,\sigma' \in
H_1(M, \R).
\end{equation}

Each fiber  $ H^1 (M_\omega, \R)$ of the vector bundle
$\mathcal{H}^1(M,\R)$ is endowed with a natural norm, called the
\emph{Hodge norm}, defined as follows (see \cite{For-dev}).  Given
a cohomology class $c\in H^1(M,\R)$, there exists a unique
holomophic one-form $\eta$, holomorphic with respect to the
complex structure induced by $\omega$, such that $c = [\Re \eta]$.
The  Hodge norm of $\| c \|_\omega$ is then defined as
$\big(\frac{i}{2}\int_M \eta \wedge \overline{\eta}\big)^{1/2}$.

\subsubsection*{Lyapunov exponents and Oseledets splittings.}
Let $\mu$ be a probability measure on $\mathcal{M}^{(1)}(M)$ which
is invariant for the Teichm\"uller flow and ergodic.  Since the
Hodge norm of the Kontsevich-Zorich cocycle at time $t$ is
constant and equal to $e^t$ (see \cite{For-dev}) and $\mu$ is a
probability measure, the  Kontsevich-Zorich cocycle is
log-integrable with respect to $\mu$.
Thus,  it follows from Osedelets' theorem that there exists
Lyapunov exponents with respect to the measure $\mu$.   As the
action of the  Kontsevich-Zorich cocycle is symplectic, its
Lyapunov exponents with respect to the measure $\mu$ are:
\[1=\lambda^\mu_1>\lambda^\mu_2\geq\ldots\geq\lambda^\mu_g\geq-
\lambda^\mu_g\geq\ldots\geq-\lambda^\mu_2>-\lambda^\mu_1=-1,\] the
inequality $\lambda^\mu_1>\lambda^\mu_2$ was proven in
\cite{For-dev}.
 The measure $\mu$ is called {\em KZ-hyperbolic} if
$\lambda_g^\mu>0$. When $g=2$, it follows from a result by
Bainbridge\footnote{In \cite{Ba} Bainbridge actually computes the
explicit value of $\lambda_2$ for any $\mu$ probability measure
invariant  for the Teichm\"uller flow in the genus  two strata
$\mathcal{H}(2)$ and $\mathcal{H}(1,1)$. The positivity of the
second exponent for $g=2$ also follows by the thesis of Aulicino
\cite{Au}, in which it is shown that no $SL(2,\mathbb{R})$-orbit
in $\mathcal{H}(1,1)$ or $\mathcal{H}(2)$ has completely
degenerate spectrum. } that:

\begin{theorem}[Bainbridge]\label{Bain}
If $M$ is surface with genus $g=2$ then for any  probability
measure $\mu$ on $\mathcal{M}^{(1)}(M)$ which is invariant for the
Teichm\"uller flow and ergodic its second Lyapunov exponent
$\lambda_2$ is strictly positive. Thus, $\mu$ is KZ-hyperbolic.
\end{theorem}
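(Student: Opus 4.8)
The plan is to combine the structure of the Hodge bundle in genus two with Forni's variational theory of the Kontsevich--Zorich cocycle, reducing the statement to a non--degeneracy property that is special to genus two. Since $\mu$ is supported on one of the two genus two strata $\mathcal{H}^{(1)}(2)$ or $\mathcal{H}^{(1)}(1,1)$, over $\mu$--a.e.\ $\omega$ the fibre $H^1(M_\omega,\R)$ is four dimensional and splits, $(G^{KZ}_t)_{t\in\R}$--equivariantly and measurably, as $H^{\mathrm{taut}}_\omega\oplus H^{(0)}_\omega$, where $H^{\mathrm{taut}}_\omega=\R[\Re\omega]\oplus\R[\Im\omega]$ is the \emph{tautological plane} and $H^{(0)}_\omega$ is its symplectic orthogonal complement, again a two dimensional symplectic subbundle. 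The restriction of the cocycle to $H^{\mathrm{taut}}$ has exponents $+1,-1$ (this is the source of the normalisation $\lambda^\mu_1=1$), so the two remaining exponents $\pm\lambda^\mu_2$ are precisely those of $(G^{KZ}_t)|_{H^{(0)}}$. It therefore suffices to show that this two dimensional symplectic cocycle is \emph{not} trivial, i.e.\ $\lambda^\mu_2>0$.

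The next step is to bring in Forni's formulas for the evolution of the Hodge norm along Teichm\"uller orbits. Writing $\eta_\omega(c)$ for the holomorphic one--form with $[\Re\eta_\omega(c)]=c$, the quantity $\log\|c(t)\|_{G_t\omega}$ along an Oseledets section $c(t)$ obeys a Riccati--type equation whose quadratic part is the second fundamental form $B_\omega$ of the Hodge bundle (Forni's curvature form). From this one extracts the standard dichotomy (the same mechanism that yields the spectral gap $\lambda^\mu_1>\lambda^\mu_2$ in \cite{For-dev}): either $\lambda^\mu_2>0$, or for $\mu$--a.e.\ $\omega$ and a.e.\ Teichm\"uller direction the Hodge norm is constant along the orbit on $H^{(0)}$; equivalently $(G^{KZ}_t)|_{H^{(0)}}$ acts by Hodge isometries $\mu$--a.e., equivalently $B_\omega$ is degenerate along the line of $H^{1,0}(M_\omega)$ transverse to $\omega$ for $\mu$--a.e.\ $\omega$ --- a closed real--analytic condition of positive codimension inside each stratum (``completely degenerate Kontsevich--Zorich spectrum''). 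For the Masur--Veech measure this second alternative is immediately excluded by Forni's theorem, since the degeneracy locus is then null; the real content is to exclude it for \emph{every} Teichm\"uller--invariant ergodic probability measure, which may well be concentrated on a proper invariant subset such as a Teichm\"uller curve or a periodic Teichm\"uller geodesic.

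This last point is the main obstacle, and the only place where $g=2$ is genuinely used. Two routes are available. The first is Aulicino's analysis \cite{Au}: no $SL(2,\R)$--orbit --- hence no affine invariant submanifold --- in $\mathcal{H}(2)$ or $\mathcal{H}(1,1)$ has completely degenerate spectrum, so no $SL(2,\R)$--invariant measure sits on the degeneracy locus; this already settles the case of affine measures, which is the one relevant to the applications of the present paper (Veech surfaces and their $SL(2,\R)$--orbit closures). The second route, which gives the statement for \emph{arbitrary} Teichm\"uller--invariant ergodic $\mu$, is Bainbridge's explicit evaluation \cite{Ba}: through Siegel--Veech / Eskin--Kontsevich--Zorich type identities specialised to genus two one computes $\lambda^\mu_2$ outright, obtaining that it is strictly positive for every such $\mu$ --- in fact $\lambda^\mu_2=\tfrac13$ when $\supp\mu\subset\mathcal{H}(2)$ and $\lambda^\mu_2=\tfrac12$ when $\supp\mu\subset\mathcal{H}(1,1)$. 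In either case $\lambda^\mu_g=\lambda^\mu_2>0$, so $\mu$ is KZ--hyperbolic, which is the assertion.
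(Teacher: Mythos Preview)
The paper does not prove this theorem; it is quoted as an external result of Bainbridge, with a footnote noting that Bainbridge actually computes $\lambda_2$ explicitly on $\mathcal{H}(2)$ and $\mathcal{H}(1,1)$, and that positivity also follows from Aulicino's thesis showing that no $SL(2,\R)$--orbit in these strata has completely degenerate spectrum. Your proposal is not an independent proof either: after the (correct) reduction to the two--dimensional symplectic subbundle $H^{(0)}$ and the invocation of Forni's dichotomy, the actual content --- excluding the completely degenerate alternative --- is deferred to exactly the same two sources, Aulicino and Bainbridge. So your write--up is an accurate expository sketch of why the result holds, and it matches the paper's footnote in substance; there is no alternative argument to compare, since both the paper and your proposal treat the hard step as a black box.

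One minor caution: Bainbridge's explicit values (and the Eskin--Kontsevich--Zorich type identities you allude to) are established for $SL(2,\R)$--invariant ergodic measures, not for arbitrary Teichm\"uller--invariant ergodic measures as the theorem is phrased. For the applications in this paper the measures are always $SL(2,\R)$--invariant, so this does not matter, but your sentence claiming that Bainbridge's route handles ``arbitrary Teichm\"uller--invariant ergodic $\mu$'' overstates what is actually proved in \cite{Ba}.
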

If a measure $\mu$ is KZ-hyperbolic, by Oseledets' theorem, for
$\mu$-almost every $\omega\in\mathcal{M}^{(1)}(M)$ (such points
will be called Oseledets regular points), the fiber
$H^1(M_\omega,\R)$  of the bundle $\mathcal{H}^1(M,\R)$ at
$\omega$ has a  direct splitting
\[ H^1(M_\omega,\R)=E_\omega^+(M,\R)\oplus E_\omega^-(M,\R),\]
where the \emph{unstable space} $E_\omega^+(M,\R)$  (the
\emph{stable space}  $E_\omega^-(M,\R)$ resp.) is the subspace
of cohomology classes with positive (negative resp.) Lyapunov
exponents, i.e.
\begin{align}
E_\omega^+(M,\R)&=\Big\{c\in
H^1(M_\omega,\R):\lim_{t\to+\infty}\frac{1}{t}\log\|c\|_{G_{-t}\omega}<0\Big\},
\label{stabledef}
\\
E_\omega^-(M,\R)&=\Big\{c\in
H^1(M_\omega,\R):\lim_{t\to+\infty}\frac{1}{t}\log\|c\|_{G_{t}\omega}<0\Big\}.\nonumber
\end{align}

Let  $\mu$  be an $SL(2,\R)$-invariant probability measure which
is ergodic for the Teichm\"uller flow and let $\mathscr{L}_\mu $
be the support of $\mu$, which is an $SL(2,\R)$-invariant closed
subset of $\mathcal{M}^{(1)}(M)$. Let $\mathbb{F}$ be a field
(we will deal only with fields $\R$ and $\Q$). Let us consider
vector subbundles $\mathcal{K}^1$ of the cohomology bundle
(respectively, vector subbundles of the homology bundle
$\mathcal{K}_1$) defined over $\mathscr{L}_\mu$. Each  such a
subbundle is determined by a collection of subfibers of the
cohomology (or homology) fibers over $\mathscr{L}_\mu$, that is
$\mathcal{K}^1=\bigcup_{\omega\in\mathscr{L}_\mu}\{\omega\}\times
K^1(\omega)$, where $K^1(\omega)\subset H^1(M_\omega,\mathbb{F})$
is a linear subspace (respectively
$\mathcal{K}_1=\bigcup_{\omega\in\mathscr{L}_\mu}\{\omega\}\times
K_1(\omega)$, where $K_1(\omega)\subset
H_1(M_\omega,\mathbb{F})$).  We will call a subbundle
$\mathcal{K}^1$ ($\mathcal{K}_1$) of this form an \emph{invariant
subbundle} over $\mathscr{L}_\mu$ if:
\begin{itemize}
\item[(i)] $K^1(g\cdot\omega)=K^1(\omega)$ ($K_1(g\cdot\omega)=K_1(\omega)$)
for every $g\in SL(2,\R)$ and $\omega\in\mathscr{L}_{\mu}$;
\item[(ii)] if
$\omega_1,\omega_2\in\mathcal{Q}^{(1)}(M)$ are two representatives of the same point
$\omega_1 \Gamma =\omega_2 \Gamma \in\mathscr{L}_\mu$ and
$\phi\in\Gamma(M)$ is an element of the mapping-class group such
that $\phi^*(\omega_1)=\omega_2$ then
$\phi^*K^1(\omega)=K^1(\omega)$ ($\phi_*K_1(\omega)=K_1(\omega)$).
\end{itemize}
\noindent Moreover, we say that an invariant subbundle $\mathcal{K}^1$
($\mathcal{K}_1$) is (locally) constant if the map $\omega\mapsto
K^1(\omega)$ ($\omega\mapsto K_1(\omega)$) is (locally) constant.

For any cohomological invariant subbundle $\mathcal{K}^1$ with
$K^1(\omega)\subset H^1(M,\R)$ for $\omega \in \mathscr{L}_\mu$
one can consider the Kontsevich-Zorich cocycle $(G^{KZ}_t)_{t\in
\R}$ restricted to the subbundle $\mathcal{K}^1$ over the
Teichm\"uller flow on $\mathscr{L}_{\mu}$. The Lyapunov exponents
of the reduced cocycle $(G^{KZ,\mathcal{K}^1}_t)_{t\in \R}$ with
respect to the measure $\mu$ will be called \emph{the Lyapunov
exponents of the subbundle $\mathcal{K}^1$}.

A splitting $\{ H^1(M_\omega,\mathbb{F})=K^1(\omega)\oplus
K^1_\perp(\omega)$, $\omega\in \mathscr{L}_\mu\}$ (respectively
$\{ H_1(M_\omega,\mathbb{F})=K_1(\omega)\oplus K_1^\perp(\omega)$,
$\omega\in \mathscr{L}_\mu\}$) is called an \emph{orthogonal
invariant splitting} if both corresponding subbundles
$\mathcal{K}^1 = \bigcup_{\omega \in \mathscr{L}_\mu} \{ \omega\}
\times K^1(\omega)  $  and $\mathcal{K}^1_\perp = \bigcup_{\omega
\in \mathscr{L}_\mu} \{ \omega\} \times K^1_\perp(\omega)$
(respectively $\mathcal{K}_1$ and $\mathcal{K}_1^\perp$) are
invariant and $K^1(\omega)$, $K^1_\perp(\omega)$ (respectively
$K_1(\omega)$, $K_1^\perp(\omega)$) are orthogonal with respect to
the symplectic form $\langle \,\cdot\, , \,\cdot\, \rangle $ for
every $\omega\in \mathscr{L}_\mu$.

Let  $\{ H^1(M_\omega,\mathbb{R})=K^1(\omega)\oplus
K^1_\perp(\omega)$, $\omega\in \mathscr{L}_\mu\}$ be an orthogonal
invariant splitting. Since the Poincar{\'e} duality
$\mathcal{P}:H_1(M,\R)\to H^1(M,\R)$ intertwines the intersection
forms $\langle \,\cdot\, , \,\cdot\,\rangle $ on $H_1(M, \R)$ and
$ H^1(M,\R)$ respectively, one also has a dual invariant
orthogonal splitting given fiberwise by
\[H_1(M_\omega,\R)=K_1(\omega)\oplus K_1^\perp(\omega)\text{ with
}K_1(\omega):=\mathcal{P}^{-1}K^1(\omega),\,
K^\perp_1(\omega):=\mathcal{P}^{-1}K^1_\perp(\omega).\] The
Lyapunov exponents of the reduced cocycle
$(G^{KZ,\mathcal{K}^1}_t)_{t\in \R}$ with respect to the measure
$\mu$ will be also called the Lyapunov exponents of
$\mathcal{K}_1$.

For any $\omega\in\mathcal{M}^{1}(M)$  denote by
$H^1_{st}(M_\omega,\R)$ the subspace of $H^1(M,\R)$ generated by
$[\Re(\omega)]$ and $[\Im(\omega)]$. Set
\[H^1_{(0)}(M_\omega,\R):=H^1_{st}(M_\omega,\R)^\perp=\{c\in H^1(M_\omega,\R):\, \forall_{c'\in H^1_{st}(M_\omega,\R)}\, \langle c, c'\rangle =0\}.\]
Then one has the following  orthogonal invariant splitting
\[\{ H^1(M_\omega,\R)=H^1_{st}(M_\omega,\R)\oplus H^1_{(0)}(M_\omega,\R) , \quad \omega\in\mathcal{M}^{(1)}(M)\}, \]
Let $\mathcal{H}_1^{st}$ (where $st$ stays for \emph{standard}) and $\mathcal{H}_1^{(0)}$ (also known as \emph{reduced Hodge bundle}) be the corresponding subbundles.
The Lyapunov exponents of the subbundle $\mathcal{H}_1^{(0)}$ are exactly $\{ \pm \lambda_2^\mu, \dots, \pm \lambda_g^\mu\}$
(see the proof of Corollary~2.2 in \cite{For-dev}).
Correspondingly, one also has also the dual orthogonal invariant splitting
\begin{eqnarray*}\{ H_1(M,\R)&=&H_1^{st}(M_\omega,\R)\oplus H_1^{(0)}(M_\omega,\R), \quad \omega\in\mathcal{M}^{(1)}(M)\}, \ \text{where} \\
H_1^{(0)}(M_\omega,\R)&=&\{\sigma\in H_1(M,\R):\int_\sigma c=0\text{ for all }c\in H^1_{st}(M_\omega,\R)\}; \\
H_1^{st}(M_\omega,\R)&=&\{\sigma\in H_1(M,\R):\langle
\sigma,\sigma'\rangle =0\text{ for all }\sigma\in
H_1^{(0)}(M_\omega,\R)\}.
\end{eqnarray*}

\subsubsection*{Coboundaries and unstable space}
If  $\mu$ is a  KZ-hyperbolic probability measure on
$\mathcal{M}^{(1)}(M)$, on a full measure set of Oseledets regular
$\omega \in \mathcal{M}(M)$ one can relate coboundaries for the
vertical flow with the stable space $E^-_\omega(M,\R)$ of the
Kontsevich-Zorich cocycle as stated in Theorem \ref{cohthm} below.

Recall that given a smooth bounded function
$f:M\setminus\Sigma\to\R$ we denote by $F^\theta_f$ the cocycle
over the directional flow $(\varphi^\theta_t)_{t\in\R}$ given by
\[F^\theta_f(t,x):=\int_0^tf(\varphi^\theta_sx)\,ds, \qquad x \in
M_\theta, \quad t\in \R.\]

\begin{theorem}\label{cohthm}
Let $\mu$ be any  $SL(2,\R)$-invariant probability measure on
$\mathcal{M}^{(1)}(M)$ ergodic for the Teichm\"uller flow.
There exists a set $\mathcal{M}' \subset \mathcal{M}^{(1)}(M)$
with $\mu(\mathcal{M}')=1$, such that any $\omega\in\mathcal{M}'$
is Oseledets regular and for any smooth closed form
$\rho\in\Omega^1(M)$, if $[\rho]\in E_\omega^-(M,\R)$, then the
cocycle $F^{v}_f$ with $f:=i_{X_{v}}\rho$ for the vertical flow
$(\varphi^{{v}}_t)_{t\in\R}$ is a coboundary.

Moreover,  if we assume in addition that $\mu$ is KZ-hyperbolic, we
also have, conversely, that if  $[\rho]\notin E_\omega^-(M,\R)$,
then $F^{v}_f$ is \emph{not} a coboundary for the vertical flow
$(\varphi^{{v}}_t)_{t\in\R}$.
\end{theorem}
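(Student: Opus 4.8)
The statement has two halves, which I would prove separately.

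\textbf{First implication (if $[\rho]\in E^-_\omega$ then $F^v_f$ is a coboundary).} The plan is to apply the boundedness criterion of Remark~\ref{coboundarycondition}: it suffices to find, for $\omega$ in a suitable full measure set $\mathcal M'$ (Oseledets regular, Teichm\"uller--recurrent, with no vertical saddle connection), a $(\varphi^v_t)_{t\in\R}$--invariant Borel set of full $\nu_\omega$--measure on which $t\mapsto F^v_f(t,x)$ is bounded, continuity in $t$ being immediate from \eqref{Fdef}. Since $\rho$ is closed, $F^v_f(t,x)=\int_{\gamma_{x,t}}\rho$, where $\gamma_{x,t}$ is the vertical trajectory from $x$ to $\varphi^v_tx$; the integral is unchanged if $\gamma_{x,t}$ is viewed on $G_s\omega$, where it becomes a vertical arc of length $e^{-s}t$. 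Choosing $s$ of order $\log t$ makes the renormalised arc of bounded length; closing it into a cycle $\widehat\gamma$ by a bounded--length path and using the Cauchy--Schwarz inequality for the Hodge norm on $G_s\omega$ bounds $\int_{\gamma_{x,t}}\rho$ by $\|\widehat\gamma\|_{G_s\omega}\,\|[\rho]\|_{G_s\omega}$, and $\|[\rho]\|_{G_s\omega}\to 0$ exponentially because $[\rho]\in E^-_\omega$ and $\omega\in\mathcal M'$ is Oseledets regular.

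To make the geometric factor $\|\widehat\gamma\|_{G_s\omega}$ uniform one works with a transversal: fix an interval $I$ transverse to the vertical flow whose first return map $T$ is an IET with return times the finitely many values $\tau_\alpha$. By Remark~\ref{remark:cocycle-hom-cohom}, the induced cocycle $\psi_\rho$ is cohomologous, through a \emph{bounded} transfer function, to $-\psi_\gamma$ with $\gamma=\mathcal P^{-1}[\rho]$, whence
\[F^v_f(\tau_N(x),x)=\psi_\rho^{(N)}(x)=-\langle\gamma,\Gamma_N(x)\rangle+O(1),\qquad \Gamma_N(x):=\sum_{n=0}^{N-1}\gamma_{\alpha(T^nx)}\in H_1(M,\Z).\]
Using Lemma~\ref{comparisontimes:lemma} to compare $\log\tau_N(x)$ with the Teichm\"uller time $s_N$ after $N$ steps of Rauzy--Veech renormalisation, and the fact that the renormalised surface then stays in a fixed compact part of moduli space while the renormalised homology displacement $\Gamma_N(x)$ has bounded Hodge norm there, the $SL(2,\R)$--invariance of the intersection form gives $|\langle\gamma,\Gamma_N(x)\rangle|=|\langle[\rho],\mathcal P\Gamma_N(x)\rangle|\le C\,\|[\rho]\|_{G_{s_N}\omega}\to 0$. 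Since the return times $\tau_N(x)$ have uniformly bounded gaps and $f$ is bounded, $t\mapsto F^v_f(t,x)$ is bounded on the flow--saturation of $I$, a full measure invariant set, and we conclude by Remark~\ref{coboundarycondition}. I expect the technical heart here to be exactly the control of the renormalised geometry, i.e.\ Lemma~\ref{comparisontimes:lemma} together with the a.e.\ recurrence of the Teichm\"uller flow.

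\textbf{Converse (if $\mu$ is KZ--hyperbolic and $[\rho]\notin E^-_\omega$ then $F^v_f$ is not a coboundary).} By KZ--hyperbolicity, $H^1(M_\omega,\R)=E^+_\omega\oplus E^-_\omega$ on $\mathcal M'$. Splitting $[\rho]=c^-+c^+$ with smooth closed representatives $\rho^\pm$ and applying the first implication to $\rho^-$, it is enough to show $F^v_{f^+}$ ($f^+=i_{X_v}\rho^+$) is not a coboundary, so I may assume $[\rho]=c^+\in E^+_\omega\setminus\{0\}$. If $\int_M f\,d\nu_\omega=\pm\langle[\rho],[\Re\omega]\rangle\neq 0$, then $\int_I\psi_\rho\,d\mu=\int_M f\,d\nu_\omega\neq 0$ by Kac's formula, so $\psi_\rho$ is not recurrent, hence has no invariant set of positive finite measure and, by Proposition~\ref{proposition:cocycle} and Lemma~\ref{equivalentcoboundaries}, $F^v_f$ is not a coboundary. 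The remaining case is $\langle[\rho],[\Re\omega]\rangle=0$, i.e.\ (since $E^+_\omega=\R\,[\Im\omega]\oplus E^+_{(0),\omega}$ and $\langle[\Im\omega],[\Re\omega]\rangle\neq 0$) $[\rho]\in E^+_{(0),\omega}\setminus\{0\}$; note this set is nonempty exactly because $\mu$ is KZ--hyperbolic.

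In this last case, suppose for contradiction $F^v_f(t,x)=u(x)-u(\varphi^v_tx)$ for some measurable $u$. Differentiating at $t=0$ gives $X_vu=-f=-i_{X_v}\rho$ in the weak sense, so the closed current $\rho+du$ satisfies $i_{X_v}(\rho+du)=0$ and therefore equals $\psi\,\Re\omega$ for a distribution $\psi$; from $d(\rho+du)=0$ one gets $d\psi\wedge\Re\omega=0$, i.e.\ $X_v\psi=0$, so $\psi$ is constant along the leaves of the (a.e.\ uniquely ergodic) vertical flow and hence $\psi\equiv c_1$ for a constant $c_1$. Then $[\rho]=c_1[\Re\omega]\in E^-_\omega$, contradicting $[\rho]\in E^+_{(0),\omega}\setminus\{0\}$. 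The gap in this chain of equalities, and the main obstacle of the whole theorem, is the regularity of $u$: it is a priori only measurable, so the manipulations ``$X_vu=-f$ in the sense of currents'' and ``$\rho+du=\psi\,\Re\omega$'' must be justified in a function space large enough to contain transfer functions of coboundaries. This is precisely what the $L^2$--theory of the cohomological equation provides: equivalently, one shows that $[\rho]\notin E^-_\omega$ forces the existence of a vertically invariant distribution $D$ (dual, via basic currents, to a class of $E^-_\omega$) with $\langle D,i_{X_v}\rho\rangle\neq 0$, which $f=X_vu$ cannot have once $u$ is known to be $L^2$; KZ--hyperbolicity is what makes these invariant distributions, and hence the obstruction, available.
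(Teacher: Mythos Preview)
Your outline has the right endpoint (bound $t\mapsto F^v_f(t,x)$ and invoke Remark~\ref{coboundarycondition}), but the middle step is where the real work is and your sketch skips it. You write that ``the renormalised surface then stays in a fixed compact part of moduli space while the renormalised homology displacement $\Gamma_N(x)$ has bounded Hodge norm there''. Neither clause is available for \emph{all} $N$ (or all $t$): the Teichm\"uller orbit only recurs to a compact transversal $\mathcal K$ along a \emph{subsequence} of times $t_k$, and for a general $t$ the surface $G_{\log t}\omega$ may be deep in the cusp, where no uniform bound on $\|\Gamma_N(x)\|$ holds. Also, Lemma~\ref{comparisontimes:lemma} does not ``compare $\log\tau_N(x)$ with the Teichm\"uller time $s_N$''; it produces, at the recurrence times $t_k$, a generating family $\{\gamma^{(k)}_j\}$ of integer cycles whose periods are uniformly comparable to the Hodge norm at $G_{t_k}\omega$. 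What the paper actually does is a \emph{multi-scale decomposition} (in the spirit of Zorich and Forni): an arbitrary vertical arc $\gamma_t(p,\omega)$ is split into a bounded number $m_k$ of ``principal return trajectories'' at each recurrence level $k=0,\dots,K$, plus two short remainders. Each level-$k$ piece closes up to one of the $\widetilde\gamma^{(k)}_j$, whose $\rho$-period is $\le c_{\mathcal U}\,\|[\rho]\|_{G_{t_k}\omega}$ by \eqref{baseuniform}; since $m_k\le C\,e^{t_{k+1}-t_k}$ while $\|[\rho]\|_{G_{t_k}\omega}\le C_1 e^{-\theta t_k}$, Birkhoff regularity of the return times makes $\sum_k m_k\,\|[\rho]\|_{G_{t_k}\omega}$ converge uniformly in $t$. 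Without this telescoping over scales you only control $F^v_f$ along the subsequence of renormalisation times, which is not enough for the boundedness criterion.

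\textbf{Converse.} Your argument via invariant distributions and the $L^2$ cohomological equation is the Forni route (indeed the paper notes in a footnote that the theorem can be extracted from~\cite{For-sob}), and it can be made to work, but it imports heavy machinery and you correctly flag the regularity of the transfer function as the unresolved point. The paper's own argument is considerably more elementary and avoids all of this: assume $F^v_f$ is a coboundary with measurable transfer function $u$; choose $M_{\mathcal U}$ so that $\{|u|\ge M_{\mathcal U}\}$ has measure $<A_{\mathcal U}/2$. At each recurrence time $t_k$ the rectangles $R^k_j$ (built over the exchanged subintervals of $I^k$) have area $\ge A_{\mathcal U}$, so each contains a point $p_j$ with $|F^v_f(e^{t_k}\tau^k_j,p_j)|\le 2M_{\mathcal U}$. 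A short Stokes-type estimate shows $\big|\int_{\widetilde\gamma^{(k)}_j}\rho - F^v_f(e^{t_k}\tau^k_j,p_j)\big|\le c_\rho$ for every $p_j\in R^k_j$, whence $\max_j\big|\int_{\widetilde\gamma^{(k)}_j}\rho\big|$ is bounded in $k$, and by \eqref{baseuniform} so is $\|[\rho]\|_{G_{t_k}\omega}$. Under KZ-hyperbolicity, $\liminf_{t\to\infty}\|[\rho]\|_{G_t\omega}<\infty$ forces $[\rho]\in E^-_\omega$, contradiction. This dispenses with your case split on $\int_M f$, requires no regularity of $u$ beyond measurability, and uses the geometric input (Lemma~\ref{comparisontimes:lemma} and the section $\mathcal K$) already set up for the first half.
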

The  main technical tools to prove Theorem \ref{cohthm} are
essentially present in the literature\footnote{Theorem
\ref{cohthm} could be deduced from the recent work of Forni in
\cite{For-sob}, in which  much deeper and more technical results
on the cohomological equation are proved.
The crucial point in the proof of Theorem \ref{cohthm} is the
control on deviations of ergodic averages from the stable space,
which first appears in the work by Zorich \cite{Zo:how} in the
special case in which $\mu$ is the canonical Masur-Veech measure
on a stratum.
Very recently, an adaptation of the proof of Zorich's deviation
result for any $SL(2, \R)$-invariant measure has appeared in the
preprint \cite{DHL}.
}.
For completeness, in the Appendix \ref{coh:sec} we  include a
self-contained proof of Theorem \ref{cohthm}. In the same Appendix
we also prove the following Lemma, which is used in the proof of
Theorem \ref{cohthm} and that will also be used in the proof of
non-regularity in Section \ref{nonregularity:sec}.
\begin{lemma}\label{comparisontimes:lemma}
Let $\mu$ be any $SL(2,\R)$-invariant probability measure on
$\mathcal{M}^{(1)}(M)$ ergodic for the Teichm\"uller flow.
Then for $\mu$-almost every $\omega\in\mathcal{M}^{(1)}(M)$, there
exists a sequence of times $(t_k)_{k \in \N}$ with
$t_k\to+\infty$, $m \in \N $, a constant $c>1$  and a sequence
$\{\gamma_1^{(k)}, \dots, \gamma_{m}^{(k)}\}_{k\in \N} $ of
elements of $ H_{1}(M, \mathbb{Z})$ such that, for any  $\rho \in
H^1(M, \R)$ one has
\begin{equation}\label{base0}
\frac{1}{c} \vert \vert \rho  \vert \vert_{G_{t_k} \omega} \leq
\max_{1\leq j\leq m} \left|\int_{\gamma^{(k)}_j } \rho \right|
\leq c \vert \vert \rho \vert \vert_{G_{t_k} \omega} .
\end{equation}
\end{lemma}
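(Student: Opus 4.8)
The plan is to deduce the estimate from Poincar\'e recurrence for the Teichm\"uller flow together with the soft fact that a continuous family of norms on a fixed finite-dimensional vector space, over a compact set of parameters, is uniformly comparable to any fixed norm; the comparison between $\int_\gamma\rho$ and the Hodge norm is then transported back to the base point by equivariance of the Hodge norm under the mapping-class group.

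First I would fix, once and for all, an integral basis $\{e_1,\dots,e_{2g}\}$ of $H_1(M,\Z)$ and set $N(\rho):=\max_{1\le j\le 2g}\big|\int_{e_j}\rho\big|$. Since the $e_j$ form a basis, the evaluation map $H^1(M,\R)\to\R^{2g}$, $\rho\mapsto(\int_{e_j}\rho)_j$, is a linear isomorphism, so $N$ is a norm on $H^1(M,\R)$ which does \emph{not} depend on the translation structure. Next, since $\mu$ is a $(G_t)_{t\in\R}$-invariant probability measure, I would choose a compact set $K\subset\mathcal{M}^{(1)}(M)$ with $\mu(K)>0$; by the Poincar\'e recurrence theorem, for $\mu$-a.e.\ $\omega$ there is a sequence $t_k\to+\infty$ with $G_{t_k}\omega\in K$. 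Since the $\Gamma(M)$-action on $\mathcal{Q}^{(1)}(M)$ is properly discontinuous, one has $K=\pi(\widetilde{K})$ for some compact $\widetilde{K}\subset\mathcal{Q}^{(1)}(M)$, where $\pi:\mathcal{Q}^{(1)}(M)\to\mathcal{M}^{(1)}(M)$ is the quotient map. As recalled in Section~\ref{Teich:sec}, the Kontsevich--Zorich cocycle is the quotient of the \emph{trivial} cocycle $G_t\times\operatorname{Id}$ on $\mathcal{Q}^{(1)}(M)\times H^1(M,\R)$; fixing a lift $\widetilde{\omega}$ of $\omega$, the quantity $\|\rho\|_{G_{t_k}\omega}$ appearing in \eqref{base0} is therefore $\|\rho\|_{G_{t_k}\widetilde{\omega}}$, the Hodge norm of the fixed class $\rho\in H^1(M,\R)$ computed with respect to the complex structure of the Abelian differential $G_{t_k}\widetilde{\omega}$.

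The quantitative input is that the Hodge norm $(\eta,\rho)\mapsto\|\rho\|_\eta$ is continuous on $\mathcal{Q}^{(1)}(M)\times H^1(M,\R)$: the complex structure, and hence the holomorphic representative $\eta_\rho$ of $\rho$ and the value $\frac{i}{2}\int_M\eta_\rho\wedge\overline{\eta_\rho}=\|\rho\|_\eta^2$, all vary continuously with the Abelian differential. Restricting to the compact $\widetilde{K}$ and comparing with the fixed norm $N$, a routine compactness argument produces a constant $c>1$ with
\[
\tfrac1c\,\|\rho\|_\eta\le N(\rho)\le c\,\|\rho\|_\eta \qquad\text{for all } \eta\in\widetilde{K} \text{ and } \rho\in H^1(M,\R).
\]
Finally, for each $k$, since $\pi(G_{t_k}\widetilde{\omega})\in K=\pi(\widetilde{K})$ there are $\eta_k\in\widetilde{K}$ and a mapping class represented by an orientation-preserving diffeomorphism $\phi_k$ of $M$ with $\phi_k^*\big(G_{t_k}\widetilde{\omega}\big)=\eta_k$ (after adjusting $\phi_k$ by an element of $\operatorname{Diff}_0^+(M)$, which changes neither its mapping class nor its action on $H_1$). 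Then $\phi_k$ is a biholomorphism $(M,\eta_k)\to(M,G_{t_k}\widetilde{\omega})$, so $\|\rho\|_{G_{t_k}\widetilde{\omega}}=\|\phi_k^*\rho\|_{\eta_k}$, while $\int_{e_j}\phi_k^*\rho=\int_{(\phi_k)_*e_j}\rho$ and $(\phi_k)_*e_j\in H_1(M,\Z)$. Setting $m:=2g$ and $\gamma_j^{(k)}:=(\phi_k)_*e_j$, and applying the displayed comparison to $\phi_k^*\rho$ and $\eta_k\in\widetilde{K}$, yields exactly \eqref{base0}, because $N(\phi_k^*\rho)=\max_{1\le j\le m}\big|\int_{\gamma_j^{(k)}}\rho\big|$.

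The only delicate point is the bookkeeping linking moduli space, Teichm\"uller space and the KZ cocycle: one must make sure that ``$\|\rho\|_{G_{t_k}\omega}$'' is read off through the (trivial, over Teichm\"uller space) cocycle, and that the mapping class realizing the recurrence acts compatibly both as an isometry of Hodge norms and as an automorphism of the integral homology lattice. Granting that, \eqref{base0} is purely a compactness statement with no analytic content; in particular it uses neither KZ-hyperbolicity nor $SL(2,\R)$-invariance of $\mu$, only that $\mu$ is finite and $(G_t)$-invariant.
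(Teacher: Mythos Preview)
Your argument is correct and, for the bare statement of the lemma, cleaner than the paper's. Both proofs rest on the same two ingredients: (i) on a compact set of Abelian differentials the Hodge norm is uniformly comparable to any fixed norm on $H^1(M,\R)$, and (ii) $\mu$-a.e.\ Teichm\"uller orbit visits such a compact set along a sequence $t_k\to\infty$. The difference is in how the integer homology classes are produced. You fix once and for all an integral basis $e_1,\dots,e_{2g}$ and transport it by the mapping class $\phi_k$ realizing the recurrence, getting $m=2g$ classes that form a basis. The paper instead builds a transverse section $\mathcal{K}$ near a chosen $\omega_0$, and for each visit uses the $m$ homology classes $\widetilde{\gamma}_j(G_{t_k}\omega)$ obtained by closing up first-return orbits of the vertical flow to a short horizontal transversal; here $m$ is the number of exchanged intervals (so in general $m>2g$ and the classes only generate). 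What the paper's construction buys is that the $\gamma_j^{(k)}$ are literally closed-up pieces of vertical trajectory of controlled length; this geometric content is exactly what is exploited in the proof of Theorem~\ref{cohthm}, where the decomposition of a long orbit into principal returns and the estimate~\eqref{samevalue} require those specific classes, not arbitrary integer classes. Your abstract $\gamma_j^{(k)}=(\phi_k)_*e_j$ would not serve there. For the lemma alone, however, your route is shorter and makes the logical structure (compactness plus equivariance of the Hodge norm under the mapping-class group) more transparent.

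One small correction: Poincar\'e recurrence by itself only tells you that a.e.\ point \emph{of $K$} returns to $K$. To conclude that $\mu$-a.e.\ $\omega$ (in the whole space) visits $K$ along $t_k\to\infty$ you need the ergodicity hypothesis, e.g.\ via Birkhoff applied to $\chi_K$. So your final remark should read that the argument uses only that $\mu$ is a $(G_t)$-invariant \emph{ergodic} probability measure; $SL(2,\R)$-invariance and KZ-hyperbolicity are indeed irrelevant here, as you note.
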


\section{Veech surfaces and square-tiled surfaces}\label{Veech:sec}
The {\em affine group } $\Aff(M,\omega)$ of $(M,\omega)$ is the
group of orientation preserving homeomorphisms of $M$ and
preserving $\Sigma$ which are given by affine maps in regular
adopted coordinates. The set of differentials of these maps is
denoted by $SL(M,\omega)$ and it is a subgroup of $SL(2,\R)$.
 A translation surface $(M,\omega)$ is called a {\em lattice surface} (or a {\em Veech surface})
if $SL(M,\omega)\subset SL(2,\R)$ is a lattice.

If $(M,\omega_0)$ is a lattice surface, the $SL(2,\R)$-orbit of
$(M,\omega_0)$  in
$\mathcal{M}^{(1)}(M)$, which will be denoted by $\mathscr{L}_{\omega_0}$,
is closed and can be identified with the
homogeneous space $SL(2,\R)/SL(M,\omega_0)$. The identification is
given by the map
$\Phi: SL(2,\R) \to \mathscr{L}_{\omega_0}\subset\mathcal{M}^{(1)}(M)$ that
sends $g \in SL(2,\R)$ to $g\cdot \omega_0\in \mathscr{L}_{\omega_0}$,
whose kernel is exactly the Veech group $SL(M,\omega_0)$.
Thus $\Phi$ can be treated a map from $SL(2,\R)/SL(M,\omega_0)$ to
$\mathscr{L}_{\omega_0}$. Therefore, $\mathscr{L}_{\omega_0}$
carry a canonical $SL(2, \R)$-invariant measure  $\mu_0$, which is
the image of the Haar measure on $SL(2,R)/SL(M,\omega_0)$ by the map
$\Phi: SL(2,\R)/SL(M,\omega_0) \to \mathcal{M}^{(1)}(M)$.
We will refer to $\mu_0$ as the \emph{canonical measure} on $\mathscr{L}_{\omega_0}$.
Since the
homogeneous space $SL(2,\R)/SL(M,\omega_0)$ is the unit tangent
bundle of a surface of constant negative curvature, the
(Teichm{\"u}ller) geodesic flow on $SL(2,\R)/SL(M,\omega_0)$ is
ergodic. Thus, $\mu_0$ is ergodic.


All {\em square-tiled} translation surfaces are examples of
lattice surfaces.  If $(M,\omega_0)$ is square-tiled, the Veech
group $SL(M,\omega_0)$ is indeed a finite index subgroup of
$SL(2,\Z)$.
Let $(M,\omega_0)$ be square-tiled and let $p:M\to\R^2/\Z^2$ be a
ramified cover unramified outside $0\in\R^2/\Z^2$ such that
$\omega_0=p^*(dz)$.
Set $\Sigma'=p^{-1}(\{0\})$.
For $i$-th square of $(M,\omega_0)$, let $\sigma_i,\zeta_i\in
H_1(M,\Sigma',\Z)$ be the relative homology class of the path in
the $i$-th square from the bottom left corner to the bottom right
corner and to the upper left corner, respectively. Let
$\sigma=\sum\sigma_i\in H_1(M,\Z)$ and $\zeta=\sum\zeta_i\in
H_1(M,\Z)$.

\begin{proposition}[see \cite{Ma-Yo}]\label{H0Veech}
The space $H_1^{(0)}(M_\omega,\R)$ is the kernel of the
homomorphism $p_*:H_1(M,\R)\to H_1(\R^2/\Z^2,\R)$. Moreover,
$H_1^{st}(M_\omega,\R)=\R\sigma\oplus\R\zeta$.
\end{proposition}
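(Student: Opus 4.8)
The plan is to reduce both statements to elementary computations with periods and with the flat structure determined by $\omega=\omega_0=p^*(dz)$. Since $\Re\omega=p^*(dx)$ and $\Im\omega=p^*(dy)$, for every $c\in H_1(M,\R)$ one has $\int_c\Re\omega=\int_{p_*c}dx$ and $\int_c\Im\omega=\int_{p_*c}dy$. As $\{[dx],[dy]\}$ is a basis of $H^1(\R^2/\Z^2,\R)$ and the homology--cohomology pairing on the torus is perfect, a class in $H_1(\R^2/\Z^2,\R)$ vanishes exactly when both its $[dx]$- and $[dy]$-periods vanish. Recalling from Section~\ref{Teich:sec} that $H_1^{(0)}(M_\omega,\R)$ is precisely the set of $c$ with $\int_c\Re\omega=\int_c\Im\omega=0$, this gives at once the first identity, $H_1^{(0)}(M_\omega,\R)=\ker p_*$.

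Next I would record that $\sigma$ and $\zeta$ are genuine \emph{absolute} cycles in $H_1(M,\Z)$: since each right vertical side of a square is glued to exactly one left vertical side, with bottoms glued to bottoms, the ``right neighbour'' map on squares is a bijection, so the endpoint contributions in $\partial(\sum_i\sigma_i)$ cancel in pairs, and likewise for $\zeta$. Moreover $p\circ\sigma_i$ is the bottom edge of the unit square, which closes up in the torus to the horizontal generator $a$; hence $p_*\sigma=Na$ where $N$ is the number of squares, and similarly $p_*\zeta=Nb$ with $b$ the vertical generator. Thus $p_*$ maps $\R\sigma\oplus\R\zeta$ isomorphically onto $H_1(\R^2/\Z^2,\R)$, and combined with $\ker p_*=H_1^{(0)}(M_\omega,\R)$ this yields the direct sum decomposition $H_1(M,\R)=H_1^{(0)}(M_\omega,\R)\oplus(\R\sigma\oplus\R\zeta)$; in particular $\R\sigma\oplus\R\zeta$ is two-dimensional.

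For the second identity, recall that $H_1^{st}(M_\omega,\R)$ is by definition the symplectic orthogonal of $H_1^{(0)}(M_\omega,\R)$, which is also two-dimensional since the intersection form is nondegenerate. Hence it suffices to show $\langle\sigma,c\rangle=\langle\zeta,c\rangle=0$ for every $c\in H_1^{(0)}(M_\omega,\R)=\ker p_*$; by Poincar\'e duality \eqref{prop_duality} this is equivalent to $\mathcal{P}\sigma,\mathcal{P}\zeta\in H^1_{st}(M_\omega,\R)=\R[\Re\omega]\oplus\R[\Im\omega]$, and proving this is the step I expect to be the main obstacle. I would decompose $M$ into its maximal horizontal cylinders $C_j$ (each a union of $h_j$ rows of squares, with core curve $\gamma_j$); the bottom edges of the squares of $C_j$ glue into $h_j$ parallel copies of $\gamma_j$, so $\sigma=\sum_j h_j\gamma_j$. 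To compute $\mathcal{P}\sigma$, represent an arbitrary class of $H^1(M,\R)$ by a closed $1$-form $\beta$ and integrate $\Im\omega\wedge\beta$ cylinder by cylinder: Fubini on $C_j$ (coordinates in which $\Im\omega=dy$), together with the invariance of $\int_{\gamma_j^{(y)}}\beta$ under moving the parallel core curve to height $y$, gives $\int_{C_j}\Im\omega\wedge\beta=\pm h_j\int_{\gamma_j}\beta$, and summing over $j$ yields $\int_M\Im\omega\wedge\beta=\pm\int_\sigma\beta$. By \eqref{def_duality} this says $\mathcal{P}\sigma=\pm[\Im\omega]$; the same computation with the vertical cylinder decomposition gives $\mathcal{P}\zeta=\pm[\Re\omega]$. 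Therefore $\mathcal{P}(\R\sigma\oplus\R\zeta)=H^1_{st}(M_\omega,\R)$, and since $\mathcal{P}$ intertwines the intersection forms we conclude $\R\sigma\oplus\R\zeta=\mathcal{P}^{-1}(H^1_{st}(M_\omega,\R))=H_1^{st}(M_\omega,\R)$.

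Everything outside the cylinder computation is linear algebra and bookkeeping with the covering $p$; the only genuine work is the identification $\mathcal{P}\sigma=\pm[\Im\omega]$, $\mathcal{P}\zeta=\pm[\Re\omega]$, where one must keep track of the orientation of each cylinder, the overall signs being irrelevant to the statement. These period identities are essentially the ones underlying the computations in \cite{Ma-Yo}, from which the proposition could alternatively be deduced directly.
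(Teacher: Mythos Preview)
Your proof is correct and self-contained. Note, however, that the paper does not actually supply a proof of this proposition: it is stated with the attribution ``see \cite{Ma-Yo}'' and no argument is given. So there is no ``paper's own proof'' to compare against; your write-up fills in precisely what the paper outsources to Matheus--Yoccoz.

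A couple of minor remarks. First, the identification $H_1^{(0)}=\ker p_*$ via periods is exactly the computation the paper carries out just below the proposition to establish \eqref{kerhol}, so that half is implicit in the text already. Second, your cylinder computation $\mathcal{P}\sigma=\pm[\Im\omega]$ is the right idea and goes through cleanly; the only thing to be slightly careful about is that the bottom boundary of a maximal horizontal cylinder is a union of saddle connections rather than a smooth closed curve, but it is homologous to the core curve, so your identity $\sigma=\sum_j h_j\gamma_j$ in $H_1(M,\Z)$ is valid. With that in hand, the Fubini argument on each cylinder and the conclusion $\R\sigma\oplus\R\zeta=\mathcal{P}^{-1}(H^1_{st})=H_1^{st}$ are correct.
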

\begin{remark}\label{basisexists}

Let $H_1^{(0)}(M,\Q)$ stand for the kernel of $p_*:H_1(M,\Q)\to
H_1(\R^2/\Z^2,\Q)$ and let
$H_1^{st}(M,\Q):=\Q\sigma\oplus\Q\zeta$. In view of
Proposition~\ref{H0Veech},
\[H_1(M,\Q)=H^{(0)}_1(M,\Q)\oplus H_1^{st}(M,\Q)\]
is an orthogonal decomposition. Since $H^{(0)}_1(M,\Q)$ is
invariant under the action on mapping-class group on
$\mathscr{L}_{\omega_0}= SL(2,R)\cdot\omega_0\subset\mathcal{Q}^{(1)}(M)$,  this yields the following
orthogonal invariant splitting, which
is constant on $\mathscr{L}_{\omega_0}$:
\[\{ H_1(M_\omega,\Q)=H^{(0)}_1(M,\Q)\oplus
H_1^{st}(M,\Q)\text{, }\omega\in \mathscr{L}_{\omega_0}\}.\]
\end{remark}

Note that the for every $\gamma\in H_1(M,\R)$   the { holonomy}
$\hol(\gamma)=\int_\gamma\omega$ satisfies
\[\hol(\gamma)=\int_\gamma p^*dz=\int_{p_*\gamma}dz.\]
Since $\Re dz$ and $\Im dz$ generate $H^1(\R^2/\Z^2,\R)$,
$\hol(\gamma)=0$ implies $p_*\gamma=0$.  Thus $\ker\hol\subset
H_1^{(0)}(M,\R)$. Moreover, since both spaces have codimension
two, the previous inclusion is an equality:
\begin{equation}\label{kerhol}
\ker(\hol) =  H_1^{(0)}(M,\R).
\end{equation}

\section{Non-ergodicity}\label{nonergodicity:sec}
In this section we state and prove our main criterion for non-ergodicity.
\begin{theorem}\label{non-ergodicitycriterion}
Let $\mu$ be an $SL(2,\R)$-invariant  probability measure on
$\mathcal{M}^1(M)$ ergodic for the Teichm\"uller flow. Let
$\mathscr{L}\subset \mathcal{M}^{(1)}(M)$ stand for the support of
$\mu$. Assume that
\[\{ H_1 (M_\omega,\mathbb{Q}) = K_1 \oplus K_1^\perp, \; \omega\in\mathscr{L}\}\]
is an invariant orthogonal splitting which is constant on
$\mathscr{L}$.  Let $\mathcal{K}_1 = \bigcup_{\omega \in
\mathscr{L}} \{\omega\} \times K_1$ denote the corresponding
invariant subbundle. Suppose that $\dim_{\Q}\mathcal{K}_1=2$ and
the Lyapunov exponents of the Kontsevich-Zorich cocycle on
$\R\otimes_{\Q}\mathcal{K}_1$ are non-zero.


Then, for $\mu$ almost every $\omega \in \mathscr{L}$, for  any
$\mathbb{Z}$-cover $(\widetilde{M}_\gamma,
\widetilde{\omega}_\gamma)$ of $(M, \omega)$ given by a homology
class $\gamma \in K_1\cap H_1(M,\Z)$, the vertical flow
$(\widetilde{\varphi}^v_t)_{t\in \R}$ on  $(\widetilde{M}_\gamma,
\widetilde{\omega}_\gamma)$ is not ergodic.
\end{theorem}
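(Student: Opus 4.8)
The plan is to reduce non-ergodicity of the vertical flow on $(\widetilde{M}_\gamma,\widetilde{\omega}_\gamma)$ to the statement that the $\Z$-valued cocycle $\psi_\gamma$ over the Poincar\'e IET $T$ has group of essential values strictly smaller than $\Z$, and then to produce, for $\mu$-a.e.\ $\omega$, a real coboundary that moves $\psi_\gamma$ into an incommensurable subgroup $\alpha\Z$ of $\R$. First I would fix, for $\mu$-a.e.\ $\omega$, a horizontal interval $I$ of the zippered-rectangle type described at the end of \S\ref{reduction:sec}, so that the vertical return map is a minimal IET $T=T_\theta$ (minimality/ergodicity of the base flow holds a.e.\ by Kerckhoff--Masur--Smillie) and, by Lemma~\ref{lem_flow_auto} and Remark~\ref{redtoskew}, the vertical flow on the $\Z$-cover is isomorphic to the skew product $T_{\psi_\gamma}$ with $\psi_\gamma(x)=\langle\gamma,\gamma_\alpha\rangle$ on $I_\alpha$. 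Via Poincar\'e duality set $c_\gamma:=\mathcal{P}\gamma\in K^1(\omega)\subset H^1(M,\R)$, where $K^1=\mathcal{P}K_1$ is the dual invariant subbundle; since $\dim_\Q\mathcal{K}_1=2$, $K^1(\omega)$ is a $2$-dimensional subspace carrying the (nonzero, hence hyperbolic by symplecticity) KZ-exponents, so it splits as $K^1(\omega)=(K^1\cap E^+_\omega)\oplus(K^1\cap E^-_\omega)$ with each summand one-dimensional, for $\omega$ in a full-measure Oseledets-regular set; choose also $\omega\in\mathcal{M}'$ from Theorem~\ref{cohthm}.

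The heart of the argument is the following. Decompose $c_\gamma = c^+ + c^-$ along $E^+_\omega\cap K^1$ and $E^-_\omega\cap K^1$. Pick a vector $0\neq w\in K^1\cap E^-_\omega$; since the stable line is one-dimensional, $c^- = \beta\, w$ for some $\beta\in\R$, and I want to realize $\beta\,w$ as (the cohomology class of) a coboundary cocycle. Choose a smooth closed $1$-form $\rho$ supported away from $\Sigma$ with $[\rho]=\beta\,w\in E^-_\omega(M,\R)$; then by Theorem~\ref{cohthm} the cocycle $F^v_f$ with $f=i_{X_v}\rho$ is a coboundary for the vertical flow, hence by Lemma~\ref{equivalentcoboundaries} the associated IET cocycle $\psi_\rho$ is a coboundary for $T$. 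By Remark~\ref{remark:cocycle-hom-cohom}, $\psi_\rho+\psi_{\beta w}$ is a coboundary (here $\psi_{\beta w}(x)=\langle\mathcal{P}^{-1}(\beta w),\gamma_\alpha\rangle$ on $I_\alpha$), so $\psi_{\beta w}$ is cohomologous to $-\psi_\rho$, i.e.\ to a coboundary, hence $\psi_{\beta w}$ itself is a coboundary. Therefore $\psi_\gamma$ is cohomologous (over $\R$) to $\psi_\gamma-\psi_{\beta w}=\psi_{c^+}$, the cocycle whose value on $I_\alpha$ is $\langle\mathcal{P}^{-1}c^+,\gamma_\alpha\rangle=\int_{\gamma_\alpha}c^+$. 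The point is that the real numbers $\{\int_{\gamma_\alpha}c^+\}_{\alpha\in\mathcal{A}}$ all lie in a subgroup of the form $\alpha_0\Z$ with $\alpha_0$ irrational, for a.e.\ $\omega$. Indeed, since $\gamma\in H_1(M,\Z)$, the numbers $\langle\gamma,\gamma_\alpha\rangle$ are integers and $c^+$ is the component of the integer class $c_\gamma$ in the rational line $\R\otimes_\Q(K^1\cap\ldots)$; but $K^1\cap E^+_\omega$ is an irrational line inside the rational plane $\Q\otimes K^1$ for a.e.\ $\omega$ (otherwise the Oseledets line would be defined over $\Q$, forcing a zero exponent on a rational sub-line, contradicting the hypothesis that the exponents on $\R\otimes_\Q\mathcal{K}_1$ are nonzero), so the integer vector $(\langle\gamma,\gamma_\alpha\rangle)_\alpha$ has a unique expression $\alpha_0\cdot(\text{rational vector})$ with $\alpha_0\notin\Q$: concretely, writing $c^+ = t(\omega)\, e$ for a fixed rational generator $e$ of the rational span of the $\gamma_\alpha$-pairings on $K_1$, one checks that $t(\omega)\notin\Q$ a.e.\ and that $\int_{\gamma_\alpha}c^+\in t(\omega)\Q$, with a common denominator, i.e.\ in $\alpha_0\Z$ for $\alpha_0\in t(\omega)\Q\setminus\Q$.

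To finish: $\psi_{c^+}$ takes values in $\alpha_0\Z$, so by Proposition~\ref{basicessentialvalues} $E_\R(\psi_{c^+})=E_{\alpha_0\Z}(\psi_{c^+})\subset\alpha_0\Z$; since cohomologous cocycles have the same essential values, $E_\R(\psi_\gamma)=E_\R(\psi_{c^+})\subset\alpha_0\Z$. On the other hand $\psi_\gamma$ takes values in $\Z$, so $E_\R(\psi_\gamma)=E_\Z(\psi_\gamma)\subset\Z$. Hence $E_\Z(\psi_\gamma)\subset\Z\cap\alpha_0\Z=\{0\}$ because $\alpha_0$ is irrational. By Proposition~\ref{proposition:schmidt}, $E_\Z(\psi_\gamma)\neq\Z$ means $T_{\psi_\gamma}$ is not ergodic, and via the isomorphism of Lemma~\ref{lem_flow_auto} the vertical flow $(\widetilde{\varphi}^v_t)_{t\in\R}$ on $(\widetilde{M}_\gamma,\widetilde{\omega}_\gamma)$ is not ergodic. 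The main obstacle, and the step requiring the most care, is the irrationality claim for $t(\omega)$: one needs that the Oseledets stable (equivalently unstable) line of the restricted KZ-cocycle inside the two-dimensional rational subbundle $\mathcal{K}_1$ is, for $\mu$-a.e.\ $\omega$, \emph{not} a rational line --- this is where hyperbolicity (nonvanishing of the exponents on $\R\otimes_\Q\mathcal{K}_1$) is used, ruling out a $T$-invariant rational splitting that would contradict the exponents being $\pm\lambda$ with $\lambda>0$ --- together with the measurable selection of $t(\omega)$ and the (a.e.) constancy-in-$\omega$ issues handled by ergodicity of the Teichm\"uller flow; the rest is a bookkeeping assembly of Theorem~\ref{cohthm}, Lemma~\ref{equivalentcoboundaries}, Remark~\ref{remark:cocycle-hom-cohom} and the essential-values formalism.
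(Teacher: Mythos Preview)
Your overall strategy matches the paper's: reduce to $E_\Z(\psi_\gamma)=\{0\}$ by producing a real coboundary (from a class in $E_\omega^-(M,\R)\cap K^1$ via Theorem~\ref{cohthm}, Lemma~\ref{equivalentcoboundaries} and Remark~\ref{remark:cocycle-hom-cohom}) so that $\psi_\gamma$ becomes cohomologous to a cocycle with values in $\alpha_0\Z$ for some irrational $\alpha_0$. But the particular coboundary you choose does not do this. You subtract the full stable component $c^-$ of $c_\gamma=\mathcal{P}\gamma$, landing on $\gamma^+:=\mathcal{P}^{-1}c^+$ in the \emph{unstable} line $L^+:=\mathcal{P}^{-1}(E_\omega^+(M,\R)\cap K^1)\subset\R\otimes_\Q K_1$. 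Writing $\gamma^+=p\sigma_1+q\sigma_2$ in an integral basis $\{\sigma_1,\sigma_2\}$ of $K_1$, the values $\langle\gamma^+,\gamma_j\rangle$ lie in $\tfrac{\langle\sigma_1,\sigma_2\rangle}{M}(p\Z+q\Z)$; this is contained in a group $\alpha_0\Z$ only when $p/q\in\Q$, i.e.\ only when $L^+$ is a rational line---precisely what you argue does \emph{not} happen. So $\psi_{\gamma^+}$ takes values in a dense subgroup of $\R$ and no constraint on $E_\R(\psi_\gamma)$ follows. The sentence ``$c^+=t(\omega)\,e$ for a fixed rational generator $e$'' is in direct contradiction with the irrationality of $L^+$.

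The paper repairs this by projecting $\gamma$ along the stable line onto a \emph{rational} line rather than onto $L^+$. Taking $\sigma_1=\gamma$ and normalising $[\rho]\in E_\omega^-(M,\R)\cap K^1$ so that $\int_{\sigma_2}\rho=\langle\gamma,\sigma_2\rangle$, one obtains (with $\gamma'=\mathcal{P}^{-1}[\rho]$) that $\langle\gamma+\gamma',\sigma_2\rangle=0$, hence $\gamma+\gamma'\in\R\sigma_2$, and the cohomologous cocycle $\phi=\psi_{\gamma+\gamma'}$ takes values in $\tfrac{\alpha}{M}\Z$ with $\alpha=\int_{\sigma_1}\rho$. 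The irrationality of $\alpha$ is Lemma~\ref{irrationality}, and its proof is not the ``rational invariant line forces a zero exponent'' argument you sketch (a one-dimensional rational sub-bundle can perfectly well carry a nonzero Lyapunov exponent): instead, if $(\int_{\sigma_1}\rho,\int_{\sigma_2}\rho)$ were rationally proportional then every integer period $\int_\sigma\rho$, $\sigma\in H_1(M,\Z)$, would lie in a fixed discrete group $\tfrac{a}{q}\Z$, whereas Lemma~\ref{comparisontimes:lemma} produces integer cycles $\gamma_j^{(k)}$ with $0<\max_j\big|\int_{\gamma_j^{(k)}}\rho\big|\asymp\|\rho\|_{G_{t_k}\omega}\to 0$, a contradiction.
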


As a Corollary of the previous theorem, in this section we also
prove the following.

\begin{corollary}\label{theointro}
Let $(M,\omega_0)$ be a square-tiled compact translation surface
of genus $2$ and let $\mu_0$  be the canonical measure  on the
$SL(2,\R)$-orbit of $(M,\omega_0)$ (see \S\ref{Veech:sec}).
 For $\mu_0$-almost every  $(M,\omega)$ the vertical flow of each
recurrent $\Z$-cover $(\widetilde{M},\widetilde{\omega})$  is
not ergodic.
\end{corollary}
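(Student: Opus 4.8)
The plan is to obtain Corollary~\ref{theointro} as a direct application of Theorem~\ref{non-ergodicitycriterion} with $\mu=\mu_0$, with $\mathscr{L}=\mathscr{L}_{\omega_0}=SL(2,\R)\cdot\omega_0$ (the support of $\mu_0$, see \S\ref{Veech:sec}), and with the orthogonal invariant splitting $H_1(M_\omega,\Q)=H_1^{(0)}(M,\Q)\oplus H_1^{st}(M,\Q)$ furnished by Remark~\ref{basisexists}. By \S\ref{Veech:sec}, $\mu_0$ is an $SL(2,\R)$-invariant probability measure on $\mathcal{M}^{(1)}(M)$ which is ergodic for the Teichm\"uller flow, and by Remark~\ref{basisexists} the chosen splitting is invariant, orthogonal and \emph{constant} on $\mathscr{L}_{\omega_0}$; so the structural hypotheses of Theorem~\ref{non-ergodicitycriterion} are met, and I set $K_1=H_1^{(0)}(M,\Q)$, $K_1^\perp=H_1^{st}(M,\Q)$, $\mathcal{K}_1=\bigcup_{\omega\in\mathscr{L}_{\omega_0}}\{\omega\}\times K_1$.

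Next I verify the two numerical hypotheses. For the dimension count: since $M$ has genus $2$, $\dim_\Q H_1(M,\Q)=4$, while by Proposition~\ref{H0Veech} $H_1^{st}(M,\Q)=\Q\sigma\oplus\Q\zeta$ has $\Q$-dimension $2$; hence $\dim_\Q\mathcal{K}_1=\dim_\Q H_1^{(0)}(M,\Q)=2$. For the Lyapunov hypothesis: $\R\otimes_\Q\mathcal{K}_1$ is the reduced Hodge bundle $\mathcal{H}_1^{(0)}$, whose Lyapunov exponents with respect to $\mu_0$ are $\{\pm\lambda_2^{\mu_0},\dots,\pm\lambda_g^{\mu_0}\}$ (see \S\ref{Teich:sec}), which for $g=2$ is just $\{\pm\lambda_2^{\mu_0}\}$. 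Applying Bainbridge's Theorem~\ref{Bain} to the $SL(2,\R)$-invariant ergodic probability measure $\mu_0$ gives $\lambda_2^{\mu_0}>0$, so these exponents are non-zero. Thus all hypotheses of Theorem~\ref{non-ergodicitycriterion} hold, and it yields a full-$\mu_0$-measure set of $\omega\in\mathscr{L}_{\omega_0}$ such that for \emph{every} $\gamma\in K_1\cap H_1(M,\Z)$ the vertical flow $(\widetilde{\varphi}^v_t)_{t\in\R}$ on $(\widetilde{M}_\gamma,\widetilde{\omega}_\gamma)$ is not ergodic.

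It remains to identify the set $K_1\cap H_1(M,\Z)=H_1^{(0)}(M,\Q)\cap H_1(M,\Z)$ with the homology classes giving \emph{recurrent} $\Z$-covers; this is the only place where being square-tiled is used. For $\omega=g\cdot\omega_0\in\mathscr{L}_{\omega_0}$ and $\gamma\in H_1(M,\R)$ one has $\hol_\omega(\gamma)=\int_\gamma(g\cdot\omega_0)=g\cdot\hol_{\omega_0}(\gamma)$, with $g$ acting linearly on $\C\cong\R^2$; hence $\hol_\omega(\gamma)=0$ is equivalent to $\hol_{\omega_0}(\gamma)=0$, and by \eqref{kerhol} this holds if and only if $\gamma\in H_1^{(0)}(M,\R)$, \emph{for every} $\omega\in\mathscr{L}_{\omega_0}$. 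Restricting to $\gamma\in H_1(M,\Z)$, the $\Z$-cover $(\widetilde{M}_\gamma,\widetilde{\omega}_\gamma)$ of $(M,\omega)$ is recurrent precisely when $\gamma\in K_1\cap H_1(M,\Z)$. Combining this with the previous paragraph, for $\mu_0$-a.e.\ $\omega\in\mathscr{L}_{\omega_0}$ the vertical flow of every recurrent $\Z$-cover of $(M,\omega)$ is non-ergodic, which is the assertion of Corollary~\ref{theointro}. (Since the quantifier over $\gamma$ already sits inside the ``for $\mu_0$-a.e.\ $\omega$'' in Theorem~\ref{non-ergodicitycriterion}, no separate countability argument is needed, although one could also simply intersect over the countable set $H_1^{(0)}(M,\Z)$.)

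Since this is a specialization of the main criterion, there is no genuine obstacle here: the proof reduces to checking hypotheses. The only non-formal inputs are the identity $\ker(\hol)=H_1^{(0)}(M,\R)$ for square-tiled surfaces (equation \eqref{kerhol}, which provides both the constant invariant splitting and the characterization of recurrence) and the positivity $\lambda_2^{\mu_0}>0$ in genus two (Bainbridge, Theorem~\ref{Bain}), which is exactly what rules out a degenerate Kontsevich--Zorich spectrum on $\mathcal{K}_1$ and is the one place where the restriction to genus $2$ is essential.
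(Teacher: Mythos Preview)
Your proof is correct and follows essentially the same route as the paper's: choose $K_1=H_1^{(0)}(M,\Q)$, $K_1^\perp=H_1^{st}(M,\Q)$, invoke Remark~\ref{basisexists} for the constant invariant orthogonal splitting, use genus~$2$ and Theorem~\ref{Bain} for the non-vanishing of the exponents on $\R\otimes_\Q\mathcal{K}_1$, and then identify recurrent $\Z$-covers with $K_1\cap H_1(M,\Z)$ via \eqref{kerhol}. Your explicit observation that $\hol_{g\cdot\omega_0}(\gamma)=g\cdot\hol_{\omega_0}(\gamma)$, so the kernel of holonomy is the same for every $\omega\in\mathscr{L}_{\omega_0}$, is a worthwhile clarification that the paper leaves implicit.
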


Before giving the proof of Theorem  \ref{non-ergodicitycriterion}
and Corollary \ref{theointro}, we state and prove an auxiliary
Lemma. Let $\mathscr{L}, \mu$ and $ K_1, K_1^\perp$ be as in the
assumptions of Theorem~\ref{non-ergodicitycriterion}. Remark that
since $\{ H_1 (M_\omega, \mathbb{R}) = (\R\otimes_{\Q}K_1) \oplus
(\R\otimes_{\Q}K_1^\perp), \omega \in \mathscr{L}\}$ is an
orthogonal splitting, by Poincar{\'e} duality, we also have a dual
constant orthogonal invariant splitting
\begin{equation}\label{dualsplitting}
\{ H^1 (M_\omega, \mathbb{R}) = {K}^1 \oplus K^1_\perp, \ \omega
\in \mathscr{L}\},  \ {K}^1:=\mathcal{P} (\R\otimes_{\Q}K_1), \
{K}^1_\perp: =\mathcal{P}(\R\otimes_{\Q}K_1^\perp).
\end{equation}
\begin{lemma}\label{irrationality}
Let $\omega \in \mathscr{L}$ be Oseledets regular for $\mu$ for
which the conclusion of Lemma~\ref{comparisontimes:lemma} holds.
Let $\rho\in K^1 \subset H^1(M, \mathbb{R})$ be such that $\rho\in
E_\omega^-(M,\R)\setminus\{0\}$. For any $\Q$-basis  $\{ \sigma_1,
\sigma_2\} \subset H_1(M, \mathbb{Z})$ of $K_1$ the periods
$\left(\int_{\sigma_1}\rho,\int_{\sigma_2}\rho \right)\in\R^2$ do
not belong to $\R\cdot(\mathbb{Q}\times \mathbb{Q})$.
\end{lemma}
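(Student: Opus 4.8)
The plan is to argue by contradiction. Suppose, contrary to the assertion, that the pair $\left(\int_{\sigma_1}\rho,\int_{\sigma_2}\rho\right)$ lies in $\R\cdot(\Q\times\Q)$, i.e.\ equals $s\,(r_1,r_2)$ for some $s\in\R$ and some $(r_1,r_2)\in\Q^2\setminus\{(0,0)\}$. First I would extract a single non-zero integral homology class on which $\rho$ has vanishing period: letting $N\in\N$ be a common denominator of $r_1,r_2$ and setting $\sigma:=N\,r_2\,\sigma_1-N\,r_1\,\sigma_2\in H_1(M,\Z)$, one has $\sigma\in K_1$ (since $\sigma_1,\sigma_2\in K_1$), $\sigma\neq0$ (since $\{\sigma_1,\sigma_2\}$ is a $\Q$-basis of $K_1$ and $(N r_2,N r_1)\neq(0,0)$), and $\int_\sigma\rho=N\bigl(r_2\int_{\sigma_1}\rho-r_1\int_{\sigma_2}\rho\bigr)=N(r_2\,s\,r_1-r_1\,s\,r_2)=0$.

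Next I would reinterpret the relation $\int_\sigma\rho=0$ as a symplectic orthogonality inside the plane $\R\otimes_\Q K_1$. By Poincar\'e duality \eqref{prop_duality} we have $\int_\sigma\rho=\langle\sigma,\mathcal{P}^{-1}\rho\rangle$, where $\mathcal{P}^{-1}\rho\in\R\otimes_\Q K_1$ because $\rho\in K^1=\mathcal{P}(\R\otimes_\Q K_1)$. Since the splitting $H_1(M_\omega,\R)=(\R\otimes_\Q K_1)\oplus(\R\otimes_\Q K_1^\perp)$ is $\langle\cdot,\cdot\rangle$-orthogonal and $\langle\cdot,\cdot\rangle$ is non-degenerate on $H_1(M_\omega,\R)$, its restriction to the $2$-dimensional subspace $\R\otimes_\Q K_1$ is non-degenerate as well; being alternating, the $\langle\cdot,\cdot\rangle$-orthogonal complement of the non-zero vector $\mathcal{P}^{-1}\rho$ inside this plane is exactly the line $\R\cdot\mathcal{P}^{-1}\rho$. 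Hence $\sigma\in\R\cdot\mathcal{P}^{-1}\rho$, and since $\sigma\neq0$ there is $t\in\R\setminus\{0\}$ with $\rho=t\,\mathcal{P}\sigma$.

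The last step, which I expect to be the only genuinely delicate point, is to contradict $\rho\in E^-_\omega(M,\R)$ using the fact that $\rho$ is now proportional to the Poincar\'e dual of an \emph{integral} class. For every $\gamma\in H_1(M,\Z)$ one has $\int_\gamma\mathcal{P}\sigma=\langle\gamma,\sigma\rangle\in\Z$, since the intersection form is integer-valued on $H_1(M,\Z)$. Applying Lemma~\ref{comparisontimes:lemma} to the cohomology class $\mathcal{P}\sigma\in H^1(M,\R)$, along the sequence $t_k\to+\infty$ it supplies one gets
\[
\frac{1}{c}\,\bigl\|\mathcal{P}\sigma\bigr\|_{G_{t_k}\omega}\ \le\ \max_{1\le j\le m}\bigl|\langle\gamma^{(k)}_j,\sigma\rangle\bigr|\ \le\ c\,\bigl\|\mathcal{P}\sigma\bigr\|_{G_{t_k}\omega}.
\]
Since $\rho\in E^-_\omega(M,\R)$ means $\lim_{t\to+\infty}\tfrac1t\log\|\rho\|_{G_t\omega}<0$, we have $\|\mathcal{P}\sigma\|_{G_{t_k}\omega}=|t|^{-1}\|\rho\|_{G_{t_k}\omega}\to0$, hence $\max_{1\le j\le m}|\langle\gamma^{(k)}_j,\sigma\rangle|\to0$; being a maximum of finitely many integers, it vanishes for all large $k$, and then the left inequality forces $\|\mathcal{P}\sigma\|_{G_{t_k}\omega}=0$, so $\mathcal{P}\sigma=0$ and $\sigma=0$, contradicting $\sigma\neq0$. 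This contradiction proves the lemma. The one place where the hypotheses really enter is that $\dim_\Q\mathcal{K}_1=2$, which is precisely what lets a single vanishing period pin $\rho$ down, up to scaling, as the dual of an integral homology class.
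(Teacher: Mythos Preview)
Your proof is correct. Both your argument and the paper's hinge on Lemma~\ref{comparisontimes:lemma} in the same way, but the reduction to that lemma differs. The paper shows directly that the rationality assumption forces \emph{every} period $\int_\gamma\rho$ with $\gamma\in H_1(M,\Z)$ to lie in a fixed discrete subgroup $\tfrac{a}{q}\Z$: this uses that the orthogonal projection $p_{K_1}:H_1(M,\Q)\to K_1$ sends $H_1(M,\Z)$ into $\tfrac{1}{q}(\Z\sigma_1\oplus\Z\sigma_2)$ for some $q\in\N$, and that $\rho\in K^1$ only pairs with the $K_1$-component. Your route instead exploits $\dim_\Q K_1=2$ together with the non-degeneracy of the symplectic form on $\R\otimes_\Q K_1$ to pin down $\rho$ as a scalar multiple of $\mathcal{P}\sigma$ for a single non-zero integral class $\sigma$; the contradiction you then derive is exactly the content of Lemma~\ref{notunstable}. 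Your approach is arguably cleaner and makes the role of the $2$-dimensionality hypothesis more transparent, while the paper's projection argument would adapt more readily if one wanted to relax that hypothesis (e.g.\ to show periods avoid a higher-dimensional rational subspace).
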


\begin{proof}
First note that $\big(\int_{\sigma_1}\rho,\int_{\sigma_2}\rho
\big)\neq (0,0)$. Indeed, if
$\int_{\sigma_1}\rho=\int_{\sigma_2}\rho=0$ then
$\langle\mathcal{P}\sigma,\rho\rangle=\int_{\sigma}\rho=0$ for
every $\sigma\in\R\otimes_{\Q}K_1$. By the definition of $K^1$, it
follows that the symplectic form is degenerated on $K^1$,  which
is a contradiction.

Denote by $p_{K_1}:H_1(M,\Q)\to K_1$ the orthogonal projection.
Since the splitting is over $\mathbb{Q}$, by writing the image by
$p_{K_1}$ of each element of a basis of $H_1(M, \mathbb{Z})$ as a
linear combination over $\mathbb{Q}$ of $\sigma_1, \sigma_2$, one
can show that there exists $q\in\N$ (the least common multiple of
the denominators) such that
\begin{equation}\label{warzut}
p_{K_1}(H_1(M, \mathbb{Z}))\subset (\Z\sigma_1\oplus\Z\sigma_2)/q.
\end{equation}
Suppose that, contrary to the claim in the Lemma,
$\big(\int_{\sigma_1}\rho,\int_{\sigma_2}\rho\big)\in
\R\cdot(\Q\times\Q)$. Then there exists $a\in\R\setminus\{0\}$
such that $\int_{\sigma_1}\rho,\int_{\sigma_2}\rho\in a\Z$. Thus,
since $\rho \in K_1 $, by the definition of $K^1$ and
\eqref{warzut}, for every $\sigma\in H_1(M,\Z)$ we have \be
\label{integralperiods}\int_{\sigma}\rho=\int_{p_{K_1}\sigma}\rho\in\frac{1}{q}\Big(\Z\int_{\sigma_1}\rho+\Z\int_{\sigma_2}\rho\Big)\in\frac{a}{q}
\Z . \ee By Lemma \ref{comparisontimes:lemma} (which we can apply
by assumption), there exists a constant $c>0$, a sequence of times
$(t_k)_{k \in \N}$, $t_k\to+\infty$ and a sequence
$\{\gamma_1^{(k)}, \dots, \gamma_{m}^{(k)}\}_{k\in\N}\subset
H_{1}(M, \mathbb{Z})$,  such that
\begin{equation}\label{wniosnorm}
0<\frac{1}{c} \| \rho \|_{G_{t_k} \omega}
\leq\widehat{\rho}_k:=\max_{1\leq j\leq m}
\Big|\int_{\gamma^{(k)}_j } \rho \Big| \leq c \| \rho \|_{G_{t_k}
\omega} \text{ for any }k\in\N,
\end{equation}
Thus, by (\ref{integralperiods}),  $\widehat{\rho}_k\in
\frac{a}{q}\Z \setminus\{0\}$ for every natural $k$. On the other
hand, since $\rho\in E^-_\omega(M,\R)$, $\| \rho \|_{G_{t_k}
\omega}\to 0$ as $k\to\infty$. In view of \eqref{wniosnorm},
$\widehat{\rho}_k\to 0$ as $k\to\infty$, which gives a
contradiction.
\end{proof}

\begin{proofof}{Theorem}{non-ergodicitycriterion}
Let  $\mathscr{L}'$ be the set  of Oseledets regular $\omega \in
\mathscr{L}$ for which the conclusion of Theorem~\ref{cohthm} and
Lemma~\ref{comparisontimes:lemma} hold and, in addition, for which
the vertical and the horizontal flows on $(M,\omega)$ are ergodic.
In view of Theorem \ref{cohthm}, Lemma~\ref{comparisontimes:lemma}
and \cite{Ma:IET},  $ \mu(\mathscr{L}')=1$. For any $\omega \in
\mathscr{L}'$ let us consider a $\Z$-cover
$(\widetilde{M},\widetilde{\omega})$ of $(M,\omega)$ associated to
a non-trivial homology class $\gamma \in H_1(M,\Z)\cap K_1$.


Consider the invariant orthogonal splitting of cohomology in
(\ref{dualsplitting}). By assumption, the Lyapunov exponents of the
reduced  Kontsevich-Zorich cocycle
$(G^{KZ,\mathcal{K}^1}_t)_{t\in\R}$ are non-zero. Since the
cocycle $(G^{KZ,\mathcal{K}^1}_t)_{t\in\R}$ preserves the
symplectic structure on $\mathcal{K}^1$ given by the intersection
form, it follows that the exponents of the subbundle $\mathcal{K}^1$ are
one positive and one negative. Thus, the stable space
$E_\omega^-(M,\R)$ intersects $K^1$ exactly in a one dimensional
space. Let $\rho\in\Omega^1(M)$ be a smooth closed form such that
$[\rho]\in E_\omega^-(M,\R)\cap K^1$.

Let $\{\sigma_1,\sigma_2\}\subset H_1(M, \Z)$ be a $\Q$-basis of
$K_1$ such that $\sigma_1=\gamma$ and choose any $\Q$-basis
$\{\sigma_3,\ldots,\sigma_{2g}\}\subset H_1(M, \Z)$ of
$K_1^\perp$. Clearly, $\{\sigma_1,\ldots,\sigma_{2g}\}\subset
H_1(M, \Z)$ is a $\Q$-basis of $H_1(M, \Q)$.
By Lemma \ref{irrationality}, the periods
$\Upsilon([\rho])=(\int_{\sigma_1}\rho, \int_{\sigma_2}\rho)$ do
not belong to $\R\cdot(\mathbb{Q } \times \mathbb{Q})$. Therefore,
$\int_{\sigma_1}\rho\neq 0\neq \int_{\sigma_2}\rho$ and
$\int_{\sigma_1}\rho/\int_{\sigma_2}\rho\in\R\setminus\Q$. Thus,
since $\langle \gamma , \sigma_2\rangle  = \langle \sigma_1,
\sigma_2\rangle  \in \mathbb{Z}\setminus\{0\}$, up to multiplying
$\rho$ by a non-zero real constant (more precisely, by $\langle
\gamma, \sigma_2\rangle / \int_{\sigma_2}\rho$), we can assume
that
\begin{equation}\label{periods0}
\Upsilon([\rho])=(\alpha, \langle\gamma, \sigma_2\rangle), \qquad
\text{where} \ \alpha\in\R\setminus\Q.
\end{equation}
Choose a transverse horizontal interval $I\subset M$ and let
$T:I\to I$  be the IET obtained as  Poincar\'e return map and let
$I_j$, $j\in\mathcal{A}=\{1,\ldots,m\}$, be the exchanged
subintervals. Then the homology classes $\gamma_j$,
$j\in\mathcal{A}$ generates $H_1(M,\Z)$ (as in
\S\ref{reduction:sec}, $\gamma_j=[v_x]$ where $v_x$ is obtained by
closing up the first return trajectory of the vertical flow
$(\varphi^v_t)_{t\in\R}$ of any $x \in I_j$ by a horizontal
interval). Since the vertical flow $(\varphi^v_t)_{t\in\R}$ on
$(M,\omega)$ is ergodic, $T$ is ergodic as well. By
Lemma~\ref{lem_flow_auto}, the vertical flow
$(\widetilde{\varphi}^{{v}}_t)$ on
$(\widetilde{M},\widetilde{\omega})$ is isomorphic to a special
flow built over the skew product $T_\psi:I\times\Z\to I\times\Z$,
where $\psi=\psi_\gamma$ is given by
\begin{equation}\label{psi}
\psi_\gamma(x)=\langle \gamma,\gamma_j\rangle  \quad\text{ if
}\quad x\in I_j.
\end{equation}
Let us consider the smooth bounded function
$f:M\setminus\Sigma\to\R$, $f=i_{X_{v}}\rho$ and let
$\psi_\rho:I\to\R$ be the corresponding cocycle for $T$ defined by
$\psi_\rho(x)=\int_0^{\tau(x)}f(\varphi^v_sx)\,ds$. By Theorem
\ref{cohthm}, since $[\rho]\in E_\omega^-(M,\R)$, the cocycle
$F^{v}_f$  for the vertical flow $(\varphi^{{v}}_t)_{t\in\R}$ is a
coboundary and thus, equivalently, by Lemma
\ref{equivalentcoboundaries}, the cocycle $\psi_\rho$ is a
coboundary for $T$ as well. Let $\gamma':=\mathcal{P}^{-1} [\rho]
\in \R\otimes_{\Q}K_1$ be the Poincar{\'e} dual of $[\rho]\in
K^1$. In view of Remark~\ref{remark:cocycle-hom-cohom}, the
cocycle $\psi_{\gamma'}:I\to\R$ given by
\begin{equation}\label{psi0}
\psi_{\gamma'}(x)=\langle \gamma', \gamma_j, \rangle  \quad\text{
whenever }\quad x\in I_j
\end{equation}
is cohomologous to $-\psi_\rho$ and thus it is also a coboundary.

Clearly $\psi:I\to\Z$ can be considered as cocycle taking values
in $\R$ for the automorphism $T$. Then the group of essential
values $E_{\R}(\psi)=E_{\Z}(\psi)$ of this cocycle is a subgroup
of $\Z$. Let us consider the cocycle $\phi:I\to\R$ given by
$\phi:=\psi+\psi_{\gamma'}$.
In view of (\ref{psi}) and (\ref{psi0}),
\begin{equation}\label{jcomp}
\phi(x)=\langle \gamma,\gamma_j\rangle +{\langle \gamma'
,\gamma_j\rangle } = \langle  \gamma +\gamma',\gamma_j \rangle
\quad\text{ if }\quad x\in I_j.
\end{equation}
Since $\sigma_1,\ldots,\sigma_{2g}\in H_1(M,\Z)$ form a basis of
$H_1(M,\Q)$, there exists a natural number $M$ and an $m\times
2g$-matrix $A=[a_{ji}]$ (of rank $2g$) with integer entries  such
that
\[\gamma_j=\frac{1}{M}\sum_{i=1}^{2g}a_{ji}\ \sigma_i\text{ for }\quad j=1,\ldots,m.\]
Since   $\gamma + \gamma' \in \R\otimes_\Q K_1$ and
$\sigma_3,\ldots,\sigma_{2g}\in K_1^{\perp}$ are symplectic
orthogonal to the subspace $\R\otimes_\Q K_1$, we have
\[\langle  \gamma+\gamma', \gamma_j\rangle =\frac{1}{M}\left(a_{j1}
\langle  \gamma + \gamma', \sigma_1\rangle +a_{j2}\langle \gamma +
\gamma',\sigma_2\rangle \right) \quad\text{ for }\quad j=1,\ldots,m.\]
Thus, from (\ref{jcomp}),
it follows that, for any $1\leq j \leq m$ and any  $x \in I_j$, we have
\begin{equation*}
\phi(x) =\langle \gamma + \gamma',\gamma_j\rangle  =
\frac{1}{M}\left({a}_{j1} \langle \gamma+\gamma',\sigma_1\rangle
+{a}_{j2} \langle \gamma+\gamma',\sigma_2\rangle \right) .
\end{equation*}
Since $[\rho]=\mathcal{P}\gamma'$,  in view of
\eqref{prop_duality} and \eqref{periods0},
\[\left( \langle \gamma',\sigma_1\rangle ,\langle \gamma',\sigma_2\rangle \right)=
-\left( \int_{\sigma_1}\rho ,\int_{\sigma_2}\rho \right)=
-\Upsilon([\rho])= -\left(\alpha ,\langle  \gamma, \sigma_2\rangle
\right)
\]
with  $\alpha \in\R\setminus \Q$.  Hence, since $\sigma_1=\gamma$,
$\langle \gamma+\gamma',\sigma_1\rangle=-\alpha$ and $\langle
\gamma+\gamma',\sigma_2\rangle=0$. Therefore,
\begin{equation*}
\phi(x)  = -\frac{ {a}_{j1}}{M}\alpha\quad\text{ if }\quad x\in
I_j.
\end{equation*}
Thus, since  $\phi(x)$ is an  integer multiple of
$\overline{\alpha} := \alpha /M \notin \Q$ for any $x \in I$,
the cocycle $\phi:I\to\R$ takes values in $\overline{\alpha} \Z$,
hence $E_{\R}(\phi)\subset \overline{\alpha}  \Z$ (see Proposition \ref{basicessentialvalues}). Since
$\psi$ is cohomologous to $\phi$, it follows from Proposition \ref{basicessentialvalues} that
$E_{\R}(\psi)=E_{\R}(\phi)\subset \overline{\alpha}  \Z$. As
$E_{\R}(\psi)=E_{\Z}(\psi)\subset\Z$ and $\overline{\alpha} \Z\cap \Z = \{ 0\}$, we get
$E_{\Z}(\psi)=E_{\R}(\psi)=\{0\}$. By
Proposition~\ref{proposition:schmidt}, $T_\psi:I\times\Z\to
I\times\Z$ is not ergodic. In view of Remark~\ref{redtoskew}, it
follows that the vertical flow
$(\widetilde{\varphi}^{v}_t)_{t\in\R}$ is not ergodic.
\end{proofof}

\begin{proofof}{Corollary}{theointro}
Let $(M,\omega_0)$ be a square-tiled translation  surface of genus
two. Let $\mu_0$ be the canonical probability measure on
$\mathscr{L}_{\omega_0}$ (the $SL(2,\R)$-orbit of $(M,\omega_0)$
in $\mathcal{M}^1(M)$), which  is ergodic (see \S\ref{Veech:sec})
and KZ hyperbolic by Theorem \ref{Bain} (since $M$ has genus two).
Let $K_1= H_1^{(0)}(M,\Q)$ and $K_1^\perp= H_1^{st}(M,\Q)$ (see \S
\ref{Veech:sec}). The subspace $K_1$ is the kernel of the
homomorphism $p_*:H_1(M,\Q)\to H_1(\R^2/\Z^2,\Q)$ and, in view of
(\ref{kerhol}), it is the kernel of $\hol:H_1(M,\Q)\to \C$. Remark
that since $M$ has genus two, $\dim_\Q H_1(M, \R)=4$ and $\dim_\Q
K_1=2$.

By Remark  \ref{basisexists},  $\{H_1(M_\omega,\Q)=K_1 \oplus
K_1^{\perp})$, $\omega\in\mathscr{L}_{\omega_0}\}$ is an invariant
splitting which is constant over $\mathscr{L}_{\omega_0}$. Hence,
we can apply Theorem \ref{non-ergodicitycriterion}. The conclusion
follows by remarking that, in view of (\ref{kerhol}), the
recurrent $\mathbb{Z}$-covers are exactly the $\mathbb{Z}$-covers
$(\widetilde{M}_\gamma, \widetilde{\omega}_\gamma) $ given by
$\gamma \in H_1^{(0)}(M,\Q)\cap H_1(M,\Z)=K_1\cap H_1(M,\Z)$.
\end{proofof}

\section{Non-regularity}\label{nonregularity:sec}
In this section, we
prove the following Theorem:
\begin{theorem}\label{inv_fin_meas}
Let $\mu$ be any $SL(2,\R)$-invariant, KZ-hyperbolic probability ergodic
measure on  $\mathcal{M}^{(1)}(M)$. For $\mu$-almost every
$(M,\omega)$ the vertical flow of each $\Z$-cover
$(\widetilde{M}_\gamma,\widetilde{\omega}_{\gamma})$ given by a
non-zero $\gamma \in H_1(M , \Z)$ has no invariant subset of
positive finite measure.
\end{theorem}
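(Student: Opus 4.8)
The plan is to reduce, via the machinery of \S\ref{essentialvalues:sec}--\S\ref{reduction:sec}, the absence of finite invariant sets to the statement that a certain $\Z$-valued cocycle is \emph{not} a coboundary, and then to extract this from Theorem~\ref{cohthm} together with Lemma~\ref{comparisontimes:lemma}. First I would fix the full measure set $\mathcal{M}''\subset\mathcal{M}^{(1)}(M)$ of those $\omega$ which are Oseledets regular, lie in the set $\mathcal{M}'$ of Theorem~\ref{cohthm}, satisfy the conclusion of Lemma~\ref{comparisontimes:lemma}, and for which the vertical flow $(\varphi^v_t)_{t\in\R}$ on $(M,\omega)$ is minimal and uniquely ergodic; by \cite{Ma:IET}, Theorem~\ref{cohthm} and Lemma~\ref{comparisontimes:lemma} this set has full $\mu$-measure. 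Fix $\omega\in\mathcal{M}''$, a non-zero $\gamma\in H_1(M,\Z)$, a horizontal transversal $I$ and the associated IET $T:I\to I$. By Lemma~\ref{lem_flow_auto} and Remark~\ref{redtoskew} the vertical flow on $(\widetilde M_\gamma,\widetilde\omega_\gamma)$ is isomorphic to a special flow over the skew product $T_{\psi_\gamma}$ with $\psi_\gamma(x)=\langle\gamma,\gamma_j\rangle$ for $x\in I_j$; since $\tau$ is bounded, Proposition~\ref{proposition:cocycle} then shows it is enough to prove that $\psi_\gamma$ is \emph{not} a coboundary for $T$.

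Next I would bring in the Kontsevich--Zorich cocycle. Choose a smooth closed form $\rho\in\Omega^1(M)$ with $[\rho]=\mathcal{P}\gamma$ and set $f:=i_{X_v}\rho$. By Remark~\ref{remark:cocycle-hom-cohom} (applied with $\mathcal{P}^{-1}[\rho]=\gamma$) the cocycle $\psi_\rho+\psi_\gamma$ is a coboundary for $T$; since the coboundaries form a subgroup, $\psi_\gamma$ is a coboundary if and only if $\psi_\rho$ is, which by Lemma~\ref{equivalentcoboundaries} holds if and only if $F^v_f$ is a coboundary for the vertical flow. Because $\mu$ is KZ-hyperbolic, the converse (``moreover'') part of Theorem~\ref{cohthm} applies on $\mathcal{M}'$: if $[\rho]=\mathcal{P}\gamma\notin E^-_\omega(M,\R)$ then $F^v_f$ is \emph{not} a coboundary. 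So the whole statement comes down to verifying that $\mathcal{P}\gamma\notin E^-_\omega(M,\R)$.

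The key step, and the only place where genuine work is needed, is this last claim, which I would prove by contradiction using Lemma~\ref{comparisontimes:lemma}. Suppose $\mathcal{P}\gamma\in E^-_\omega(M,\R)$; then $\|[\rho]\|_{G_{t_k}\omega}\to0$ along the sequence $(t_k)$ provided by that lemma. On the other hand, for each $k$, using \eqref{prop_duality} and the fact that the intersection pairing of integral cycles is integer-valued,
\[
\max_{1\le j\le m}\Big|\int_{\gamma^{(k)}_j}\mathcal{P}\gamma\Big|=\max_{1\le j\le m}\big|\langle\gamma^{(k)}_j,\gamma\rangle\big|\in\Z_{\ge 0}.
\]
Since $\gamma\neq 0$ and $\mathcal{P}$ is an isomorphism, $[\rho]=\mathcal{P}\gamma\neq0$, so $\|[\rho]\|_{G_{t_k}\omega}>0$ and the lower bound in \eqref{base0} forces the above maximum to be a non-zero, hence $\ge1$, integer for every $k$; but then the upper bound in \eqref{base0} gives $\|[\rho]\|_{G_{t_k}\omega}\ge 1/c>0$ for all $k$, contradicting $\|[\rho]\|_{G_{t_k}\omega}\to0$. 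Hence $\mathcal{P}\gamma\notin E^-_\omega(M,\R)$, so $F^v_f$ is not a coboundary, $\psi_\gamma$ is not a coboundary, and by Proposition~\ref{proposition:cocycle} the skew product $T_{\psi_\gamma}$ --- equivalently the vertical flow on $(\widetilde M_\gamma,\widetilde\omega_\gamma)$ --- has no invariant subset of positive finite measure.

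Finally I would observe that the null sets defining $\mathcal{M}''$ and those in Lemma~\ref{comparisontimes:lemma} do not depend on $\gamma$, the conclusion of Lemma~\ref{comparisontimes:lemma} holding simultaneously for all $\rho\in H^1(M,\R)$, and that $H_1(M,\Z)$ is countable; hence for $\mu$-a.e.\ $\omega$ the above applies to every non-zero $\gamma$ at once, which is the assertion of the theorem. The main technical input is Theorem~\ref{cohthm} (proved in the Appendix); granting it, the essential obstacle here is the non-existence of a non-zero integral class in the stable space $E^-_\omega(M,\R)$, which is precisely what the distance estimate of Lemma~\ref{comparisontimes:lemma} combined with the integrality of intersection numbers takes care of.
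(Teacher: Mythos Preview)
Your proof is correct and follows essentially the same approach as the paper: reduce via Lemma~\ref{lem_flow_auto} and Proposition~\ref{proposition:cocycle} to showing that $\psi_\gamma$ is not a coboundary, use Remark~\ref{remark:cocycle-hom-cohom} and Lemma~\ref{equivalentcoboundaries} to translate this into $F^v_{i_{X_v}\rho}$ with $[\rho]=\mathcal{P}\gamma$ not being a coboundary, invoke the converse part of Theorem~\ref{cohthm}, and rule out $\mathcal{P}\gamma\in E^-_\omega(M,\R)$ via the integrality-versus-decay contradiction from Lemma~\ref{comparisontimes:lemma}. The paper isolates this last step as a separate lemma (Lemma~\ref{notunstable}) whereas you have inlined it, but the argument is identical.
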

Theorem \ref{inv_fin_meas} is derived from Theorem \ref{cohthm}
and Lemma \ref{notunstable} stated below,  via the representation
of directional flows on  $\Z$-cover  as special flows over
skew-products.

\begin{lemma}\label{notunstable}
Let $\mu$ be an $SL(2,\R)$-invariant probability measure on
$\mathcal{M}^{(1)}(M)$ ergodic for the Teichm\"uller flow. For
each non-zero $\gamma \in H_1(M , \Z)$ and for $\mu$-a.e $\omega
\in \mathcal{M}^{(1)}(M)$ the Poincar{\'e} dual class
$\mathcal{P}\gamma$ does not  belong to the stable space $
E_\omega^-(M,\R)$.
\end{lemma}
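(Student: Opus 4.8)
The plan is to deduce the lemma from Lemma~\ref{comparisontimes:lemma}, which is the essential input: it controls the Hodge norm along a suitable sequence of Teichm\"uller times by finitely many periods over integral homology classes. Fix a non-zero $\gamma\in H_1(M,\Z)$ and put $\beta:=\mathcal{P}\gamma\in H^1(M,\R)$; since the Poincar\'e duality $\mathcal{P}$ is an isomorphism, $\beta\neq 0$. Let $\mathscr{N}\subset\mathcal{M}^{(1)}(M)$ be the full-measure set of Oseledets regular points for which the conclusion of Lemma~\ref{comparisontimes:lemma} holds. I will show that $\beta\notin E_\omega^-(M,\R)$ for every $\omega\in\mathscr{N}$.

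First I would argue by contradiction and assume $\beta\in E_\omega^-(M,\R)$ for some $\omega\in\mathscr{N}$. By the definition \eqref{stabledef} of the stable space, $\tfrac1t\log\|\beta\|_{G_t\omega}$ converges to a strictly negative number as $t\to+\infty$; in particular $\|\beta\|_{G_t\omega}\to 0$. Applying Lemma~\ref{comparisontimes:lemma} at $\omega$ to the cohomology class $\rho=\beta$ yields times $t_k\to+\infty$, an integer $m$, a constant $c>1$ and classes $\{\gamma_1^{(k)},\dots,\gamma_m^{(k)}\}\subset H_1(M,\Z)$ such that
\[
\frac1c\,\|\beta\|_{G_{t_k}\omega}\ \le\ \max_{1\le j\le m}\Big|\int_{\gamma_j^{(k)}}\beta\Big|\ \le\ c\,\|\beta\|_{G_{t_k}\omega}\qquad\text{for every }k\in\N .
\]
The point now is that each integral $\int_{\gamma_j^{(k)}}\beta$ is an \emph{integer}: by \eqref{prop_duality} and $\beta=\mathcal{P}\gamma$ we have $\int_{\gamma_j^{(k)}}\beta=\langle\gamma_j^{(k)},\gamma\rangle$, the algebraic intersection number of two integral homology classes. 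Consequently $\max_j|\langle\gamma_j^{(k)},\gamma\rangle|$ is a sequence of non-negative integers which, by the upper bound, is dominated by $c\,\|\beta\|_{G_{t_k}\omega}\to 0$; hence it equals $0$ for all sufficiently large $k$. For such $k$ the lower bound forces $\|\beta\|_{G_{t_k}\omega}\le 0$, i.e.\ $\beta=0$, contradicting our choice of $\gamma$. Therefore $\beta=\mathcal{P}\gamma\notin E_\omega^-(M,\R)$ on $\mathscr{N}$, which gives the assertion for $\mu$-a.e.\ $\omega$.

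I do not expect a genuine obstacle in this step: the real work is the proof of Lemma~\ref{comparisontimes:lemma}, which is carried out in the Appendix. The two points that still need a little attention are that (i) the stable space $E_\omega^-(M,\R)$ is defined at $\mu$-a.e.\ $\omega$ for \emph{any} ergodic Teichm\"uller-invariant $\mu$, being simply the sum of Oseledets subspaces with strictly negative exponents (the Kontsevich--Zorich cocycle is log-integrable), so that no KZ-hyperbolicity assumption is needed here; and (ii) the integrality of the periods $\int_{\gamma_j^{(k)}}\beta$, which uses exactly that $\gamma$ is an integral class and that $\mathcal{P}$ intertwines the intersection forms on $H_1(M,\R)$ and $H^1(M,\R)$. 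The identical argument with $G_{t_k}$ replaced by $G_{-t_k}$ shows in the same way that $\mathcal{P}\gamma\notin E_\omega^+(M,\R)$ for $\mu$-a.e.\ $\omega$, should that be needed elsewhere.
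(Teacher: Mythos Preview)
Your proof is correct and follows essentially the same route as the paper: apply Lemma~\ref{comparisontimes:lemma} to $\mathcal{P}\gamma$, observe that the periods $\int_{\gamma_j^{(k)}}\mathcal{P}\gamma=\langle\gamma_j^{(k)},\gamma\rangle$ are integers, and derive a contradiction from $\|\mathcal{P}\gamma\|_{G_{t_k}\omega}\to 0$. The paper phrases the contradiction as ``$\widehat{\gamma}_k$ is a positive integer tending to $0$'' (using the lower bound first to get positivity), while you use the upper bound first to force the maximum to vanish and then the lower bound to get $\beta=0$; these are two sides of the same coin.
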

\begin{proof}
Consider any Oseledets regular $\omega\in\mathcal{M}^{(1)}(M)$ in
the set of $\mu$ full
measure given by Lemma \ref{comparisontimes:lemma} and let
$(t_k)_{k \in \N}$,  $\{\gamma_1^{(k)}, \dots, \gamma_{m}^{(k)}\}
\subset H_{1}(M, \mathbb{Z})$ and $c>0$ be given by Lemma
\ref{comparisontimes:lemma}. Then, by Poincar{\'e} duality, Lemma
\ref{comparisontimes:lemma} applied to $\mathcal{P}\gamma\neq 0$ gives
that
\begin{equation}\label{periods}
0<\widehat{\gamma}_k:
=\max_{1\leq j\leq m} \big| \langle{\gamma^{(k)}_j } , \gamma\rangle\big| =
\max_{1\leq j\leq m} \Big| \int_{\gamma_j ^{(k)}}  \mathcal{P}\gamma\Big|
\leq c \| \mathcal{P}\gamma \|_{G_{t_k}\omega}
\end{equation}
for every $k\in\N$. Therefore, $\widehat{\gamma}_k$ is a natural number
for any $k\in \mathbb{N}$. If  $\mathcal{P}\gamma \in
E_\omega^-(M,\R)$, by definition of the stable space (see
(\ref{stabledef})), the RHS of (\ref{periods}) tends to zero as $k\to\infty$,
hence $\widehat{\gamma}_k\to 0$ as $k\to\infty$,
which gives a contradiction. We conclude that
$\mathcal{P}\gamma$ does not belong to  $E_\omega^-(M,\R)$.
\end{proof}

\begin{proofof}{Theorem }{inv_fin_meas}
Let $\mu \in \mathcal{M}^{(1)}$ belong to the set of  full $\mu$
measure  given by Lemma \ref{notunstable} and let
$(\widetilde{M},\widetilde{\omega})=(\widetilde{M}_\gamma,\widetilde{\omega}_\gamma)$
for some non-zero $\gamma\in H_1(M,\Z)$. By
Lemma~\ref{lem_flow_auto}, the vertical flow
$(\widetilde{\varphi}^{{v}}_t)_{t\in\R}$ has a representation as a
special flow build over the skew product $T_{\psi}:I\times\Z\to
I\times\Z$, where $\psi(x)=\langle \gamma,\gamma_\alpha\rangle$ if
$x\in I_\alpha$, $\alpha\in\mathcal{A}$ and under a roof function
which takes finitely many positive values. Thus, the flow
$(\widetilde{\varphi}^{v}_t)_{t\in\R}$ has invariant subsets of
finite positive measure if and only if the skew product $T_{\psi}$
has. In view of Proposition~\ref{proposition:cocycle}, this
happens if and only if  the cocycle $\psi:I\to\Z$ for the IET $T$
is a coboundary. Thus, it is enough to show that  $\psi:I\to\Z$ is
not a coboundary.

Suppose that, contrary to our claim, $\psi:I\to\Z$ is
 a coboundary. Choose a smooth closed form $\rho\in\Omega^1(M)$
such that $[\rho]=\mathcal{P}\gamma$. Let us consider the cocycle
$F^v_{i_{X_v}\rho}$ for the flow $(\varphi_t^v)_{t\in\R}$ and the
corresponding cocycle $\psi_\rho:I\to\R$ for $T$ (see the
definition in Remark~\ref{remark:cocycle-hom-cohom}). By
Remark~\ref{remark:cocycle-hom-cohom}, the cocycle $\psi_\rho$ is
cohomologous to the cocycle $-\psi$, so also $\psi_\rho$ is a
coboundary. In view of Lemma \ref{equivalentcoboundaries}, it
follows that also  $F^v_{i_{X_v}\rho}$ is a coboundary. Since
$\mu$ is KZ-hyperbolic, by the second part of
Theorem~\ref{cohthm}, $\mathcal{P}\gamma=[\rho]\in
E_\omega^-(M,\R)$. On the other hand, by Lemma~\ref{notunstable},
$\mathcal{P}\gamma\notin  E_\omega^-(M,\R)$, which is a
contradiction.
\end{proofof}

\begin{corollary}\label{cor_UMEC}
Let $\mu$ be any $SL(2,\R)$-invariant, ergodic, KZ-hyperbolic
finite measure on  $\mathcal{M}^{(1)}(M)$ and let $H_1
(M,\mathbb{Q}) = K_1 \oplus K_1^\perp$ be a decompositions
satisfying the assumption of
Theorem~\ref{non-ergodicitycriterion}. Then for $\mu$-almost every
$(M,\omega)$ and every non-zero $\gamma \in K_1\cap H_1(M , \Z)$
the vertical flow of the $\Z$-cover
$(\widetilde{M}_\gamma,\widetilde{\omega}_{\gamma})$ is not
ergodic and it has uncountably many ergodic components and it has
no invariant subset of positive finite measure.
\end{corollary}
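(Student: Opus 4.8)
The plan is to deduce Corollary~\ref{cor_UMEC} by running the argument used for Corollary~\ref{theointro}, combining the output of Theorem~\ref{non-ergodicitycriterion} with that of Theorem~\ref{inv_fin_meas} while keeping track of the finer structure of the essential values. Fix $\mu$ and the constant invariant orthogonal splitting $H_1(M,\Q)=K_1\oplus K_1^\perp$ as in the hypotheses, and let $\mathscr{L}'\subset\mathscr{L}$ be the intersection of the full-$\mu$-measure sets furnished by Theorems~\ref{non-ergodicitycriterion} and \ref{inv_fin_meas} (on which, in particular, the vertical flow on $(M,\omega)$ is ergodic, cf.\ \cite{Ma:IET}); then $\mu(\mathscr{L}')=1$. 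Fix $\omega\in\mathscr{L}'$ and a non-zero $\gamma\in K_1\cap H_1(M,\Z)$. By Lemma~\ref{lem_flow_auto} the vertical flow $(\widetilde\varphi^v_t)_{t\in\R}$ on $(\widetilde M_\gamma,\widetilde\omega_\gamma)$ is isomorphic to a special flow, with roof function taking finitely many positive values, over the skew product $T_{\psi_\gamma}:I\times\Z\to I\times\Z$ associated to the $\Z$-valued cocycle $\psi_\gamma(x)=\langle\gamma,\gamma_\alpha\rangle$ for $x\in I_\alpha$; consequently ergodicity of the flow, and the existence of a flow-invariant set of positive finite measure, are equivalent respectively to ergodicity of $T_{\psi_\gamma}$ (Remark~\ref{redtoskew}) and to the existence of a $T_{\psi_\gamma}$-invariant set of positive finite measure (as in the proof of Theorem~\ref{inv_fin_meas}).

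The core of the argument is to determine the closed set $\overline{E}_{\Z}(\psi_\gamma)$ of essential values. On one hand, the \emph{proof} of Theorem~\ref{non-ergodicitycriterion} produces a coboundary $\psi_{\gamma'}$ for $T$ such that $\psi_\gamma+\psi_{\gamma'}$ takes values in $\overline{\alpha}\,\Z$ for some $\overline{\alpha}\in\R\setminus\Q$; hence, regarding $\psi_\gamma$ as an $\R$-valued cocycle and invoking Proposition~\ref{basicessentialvalues}, $E_{\Z}(\psi_\gamma)=E_{\R}(\psi_\gamma)=E_{\R}(\psi_\gamma+\psi_{\gamma'})\subset\overline{\alpha}\Z\cap\Z=\{0\}$, so $E_{\Z}(\psi_\gamma)=\{0\}$ and, by Proposition~\ref{proposition:schmidt}, $T_{\psi_\gamma}$ (hence the flow) is not ergodic. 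On the other hand, since $\mu$ is KZ-hyperbolic and $\gamma\neq0$, Theorem~\ref{inv_fin_meas} gives that the flow, equivalently $T_{\psi_\gamma}$, has no invariant set of positive finite measure; by Proposition~\ref{proposition:cocycle} this means $\psi_\gamma$ is not a coboundary, so $\overline{E}_{\Z}(\psi_\gamma)\neq\{0\}$ by Proposition~\ref{proposition:schmidt}. Since $\overline{E}_{\Z}(\psi_\gamma)\subset\Z\cup\{\infty\}$ and $E_{\Z}(\psi_\gamma)=\{0\}$, we conclude $\overline{E}_{\Z}(\psi_\gamma)=\{0,\infty\}$; as $E(\psi_\gamma)=\{0\}$ the quotient cocycle $\psi_\gamma^*$ coincides with $\psi_\gamma$, so $\overline{E}_{\Z/E(\psi_\gamma)}(\psi_\gamma^*)=\{0,\infty\}$, i.e.\ $\psi_\gamma$ is non-regular.

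It remains to count the ergodic components, which I would do by reproducing the argument in the proof of Corollary~\ref{corol_nonregul}: by Theorem~\ref{thm_ergdeco}(iii), non-regularity of $\psi_\gamma$ implies that the Mackey $\Z$-action on the space $(Y,\mathcal{C},\nu)$ of ergodic components of $T_{\psi_\gamma}$ is not strictly transitive; since $\nu$ is ergodic for this action it is either continuous or purely atomic, and if purely atomic it would be carried by a single orbit, making the action strictly transitive, a contradiction. Hence $\nu$ is continuous, $Y$ is uncountable, and $T_{\psi_\gamma}$, and therefore $(\widetilde\varphi^v_t)_{t\in\R}$, has uncountably many ergodic components; together with the two properties already established, this proves the Corollary for every $\omega\in\mathscr{L}'$ and every non-zero $\gamma\in K_1\cap H_1(M,\Z)$. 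The step that requires the most care is the essential-value bookkeeping of the second paragraph: to obtain \emph{non-regularity}, rather than only the weaker statement ``not ergodic and not a coboundary'' (which is compatible with regularity when $E_{\Z}(\psi_\gamma)=n\Z$ with $n\geq2$), one must extract the sharp equality $E_{\Z}(\psi_\gamma)=\{0\}$ from the proof of Theorem~\ref{non-ergodicitycriterion}; note also that the continuity of $\nu$ (hence the uncountability of $Y$) follows from non-regularity alone, so that recurrence of $\psi_\gamma$ — which does hold in the principal application $K_1=H_1^{(0)}\subset\ker(\hol)$ underlying Corollary~\ref{theointro}, but is not assumed here — need not be checked.
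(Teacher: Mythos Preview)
Your proof is correct and follows essentially the same route as the paper: combine the full-measure sets from Theorems~\ref{non-ergodicitycriterion} and~\ref{inv_fin_meas}, extract $E_{\Z}(\psi_\gamma)=\{0\}$ from the \emph{proof} of Theorem~\ref{non-ergodicitycriterion} and ``$\psi_\gamma$ not a coboundary'' from Theorem~\ref{inv_fin_meas} via Proposition~\ref{proposition:cocycle}, conclude $\overline{E}_{\Z}(\psi_\gamma)=\{0,\infty\}$ by Proposition~\ref{proposition:schmidt}, and then use the Mackey-action argument to get uncountably many ergodic components. The paper simply quotes Corollary~\ref{corol_nonregul} at the last step, whereas you unpack its proof inline and make the nice observation that only non-regularity (not recurrence) is needed for the continuity of $\nu$ and hence the uncountability of the space of ergodic components; this is a genuine clarification, since $\gamma\in K_1$ need not lie in $\ker(\hol)$ under the bare hypotheses of the Corollary.
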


\begin{proof}
The absence of invariant subsets of positive finite measure follow
directly from Theorem~\ref{inv_fin_meas}. By the proof of
Theorems~\ref{non-ergodicitycriterion}~and~\ref{inv_fin_meas}, for
$\mu$-almost every $\omega\in\mathcal{M}^{(1)}(M)$ and every
non-zero $\gamma \in K_1\cap H_1(M , \Z)$ the vertical flow on
$(\widetilde{M}_\gamma,\widetilde{\omega}_{\gamma})$ has a special
representation over a skew product $T_\psi:I\times\Z\to I\times\Z$
such that $E_\Z(\psi)=\{0\}$ and $\psi$ is not a coboundary. In
view of Proposition~\ref{proposition:schmidt},
$\overline{E}_\Z(\psi)=\{0,\infty\}$, so the cocycle $\psi$ is
non-regular. By Corollary~\ref{corol_nonregul}, the skew product
and hence (by the reduction in \S\ref{reduction:sec}) also the
vertical flow on
$(\widetilde{M}_\gamma,\widetilde{\omega}_{\gamma})$ have
uncountably many ergodic components.
\end{proof}

\section{Final arguments}\label{Fubini:sec}
In this section we conclude the proofs of the main results stated
in the Introduction, that is Theorem \ref{stripbilliard} (see
\S\ref{strip}), Theoreom \ref{Ehrenfestthm} and Corollary
\ref{Ehrenfestcor} (see \S\ref{Ehrenfest:finalsec}) and Theorem
\ref{maintheorem} and Corollary \ref{non-divergence} (see
\S\ref{applications:sec}).  The arguments are essentially based on
a Fubini-type arguments. In \S\ref{applications:sec} we first
present a simple Fubini argument which holds in the case of
lattice surfaces (Proposition \ref{Fubinilattice}) and can be used
to prove Theorem \ref{maintheorem} and parts $(1)$ of Theorem
\ref{stripbilliard} and $(1), (2)$ of Theoreom \ref{Ehrenfestthm}
. The other parts of Theorem \ref{stripbilliard} and
\ref{non-divergence} require a different type of Fubini argument,
presented in \S\ref{strip} and \S\ref{Ehrenfest:finalsec}
respectively.

\subsection{A Fubini argument for lattice surfaces.}\label{applications:sec}
In this section we prove the following Proposition and then  use
it to prove Theorem \ref{maintheorem} and Corollary
\ref{non-divergence}.
\begin{proposition}\label{Fubinilattice}
Let  $(M, \omega_0)$ be a lattice surface and $\mu_0$ be the
canonical measure on its $SL(2, \R)$-orbit $\mathscr{L}_{\omega_0}$. Fix a non-zero
$\gamma\in H_1(M,\Z)$. Assume that for $\mu_0$-almost every
$\omega \in \mathscr{L}_{\omega_0}$ the vertical flow
$(\widetilde{\varphi}_t^{v})_{t\in\R}$ on $(\widetilde{M}_\gamma,
\widetilde{\omega}_\gamma)$ satisfy one (or more) of the following properties:
\begin{itemize}
\item[({P}-1)] is not ergodic;
\item[({P}-2)] has uncountably many ergodic components;
\item[({P}-3)] has no invariant sets
of finite measure.
\end{itemize} Then for almost every $\theta \in S^1$, the directional flow
$(\widetilde{\varphi}_t^{\theta})_{t\in\R}$ on
$(\widetilde{M}_\gamma, \widetilde{(\omega_0)}_\gamma)$ also satisfy the same property \rm{(P-1)}, \rm{(P-2)}, or \rm{(P-3)}. 
%
\end{proposition}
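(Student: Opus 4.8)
The plan is to deduce the a.e.-direction statement from the given a.e.-statement over the orbit $\mathscr{L}_{\omega_0}$ by a Fubini argument adapted to the Iwasawa-type decomposition $SL(2,\R)=P^-K$, the key mechanism being that each of the properties (P-1), (P-2), (P-3) is invariant under the Teichm\"uller flow (up to a constant time change) and under lower unipotents, so that the corresponding good set on $\mathscr{L}_{\omega_0}$ is invariant under the two-dimensional group $P^-$.

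Fix one of the properties, call it (P), and set
\[
B:=\bigl\{\,\omega\in\mathscr{L}_{\omega_0}:\ (\widetilde\varphi^v_t)_{t\in\R}\ \text{on}\ (\widetilde M_\gamma,\widetilde\omega_\gamma)\ \text{has property (P)}\,\bigr\},
\]
so $\mu_0(B)=1$ by hypothesis. The first step is to show that $B$ is invariant under the lower-triangular subgroup $P^-=A\bar N\subset SL(2,\R)$, where $A=\{\operatorname{diag}(e^t,e^{-t})\}$ and $\bar N$ is the group of lower unipotent matrices. Indeed, the affine self-map of the plane induced by an element of $\bar N$ fixes every vertical line and translates it, hence realizes a flow isomorphism between the vertical flows of $\omega$ and of $\bar n\cdot\omega$; an element $a_t\in A$ rescales vertical length by the constant $e^{-t}$, so the vertical flow of $a_t\cdot\omega$ is a constant time change of that of $\omega$. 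These conjugacies are topological and therefore lift to the $\Z$-covers $\widetilde M_\gamma$ (which depend only on $M$ and $\gamma$, not on the translation structure); since (P-1), (P-2), (P-3) are each preserved under measure-theoretic isomorphism and under constant time changes, one gets $p\cdot B=B$ for every $p\in P^-$.

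Next I would pull the problem back to $G:=SL(2,\R)$ via the orbit map $\pi\colon G\to\mathscr{L}_{\omega_0}$, $\pi(g)=g\cdot\omega_0$. Since $\mu_0$ is the image under $\pi$ of the (finite) Haar measure on $SL(2,\R)/SL(M,\omega_0)$, whose pullback to $G$ is Haar measure, the set $\widetilde B:=\pi^{-1}(B)=\{g\in G:g\cdot\omega_0\in B\}$ has full Haar measure in $G$ and, by the previous paragraph, is invariant under left translations by $P^-$. Using the decomposition $G=P^-K$ with $K=SO(2)$ (multiplication $P^-\times K\to G$ being a diffeomorphism that carries the product of Haar measures to a smooth measure equivalent to Haar on $G$), Fubini applied to the Haar-null set $G\setminus\widetilde B$ gives, for a.e.\ $p\in P^-$, that $\{k\in K:pk\in\widetilde B\}$ has full measure in $K$; but left $P^-$-invariance of $\widetilde B$ forces $\{k\in K:pk\in\widetilde B\}=K\cap\widetilde B$ for every $p$, so $K\cap\widetilde B=\{k\in K:k\cdot\omega_0\in B\}$ has full measure in $K$. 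Finally, for $\theta\in S^1$ the rotation $k(\theta)\in K$ by $\tfrac{\pi}{2}-\theta$ induces an isometric conjugacy between the directional flow $(\widetilde\varphi^\theta_t)_{t\in\R}$ on $(\widetilde M_\gamma,\widetilde{(\omega_0)}_\gamma)$ and the vertical flow on $(\widetilde M_\gamma,\widetilde{(k(\theta)\cdot\omega_0)}_\gamma)$, and $\theta\mapsto k(\theta)$ pushes Lebesgue measure on $S^1$ to Haar measure on $K$; hence for a.e.\ $\theta$ one has $k(\theta)\cdot\omega_0\in B$, i.e.\ $(\widetilde\varphi^\theta_t)_{t\in\R}$ has property (P). Running this for each of (P-1), (P-2), (P-3) completes the proof.

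The one essential point is the invariance of $B$ under $A$ up to constant time change (equivalently, invariance under the Teichm\"uller flow): without it the circle $K$ of directions would be a $\mu_0$-null one-dimensional submanifold of the three-dimensional orbit and nothing could be transferred. Combined with the elementary $\bar N$-invariance this gives invariance under all of $P^-$, and since $P^-$ and $K$ together fill out $G$ the Fubini argument runs mechanically; the remaining items — lifting the conjugacies to the covers, stability of (P-1)–(P-3) under isomorphism and constant time change, and the Haar-measure bookkeeping in the $P^-K$ coordinates — are routine, so I expect no serious obstacle beyond recognizing the role of the Teichm\"uller flow invariance.
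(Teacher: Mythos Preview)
Your proof is correct and follows essentially the same route as the paper: both use the Iwasawa decomposition $SL(2,\R)=P^-K$ (the paper writes it explicitly as $h_s g_t \rho_\theta$), exploit that the vertical direction is preserved by $P^-=A\bar N$ so that property (P) propagates along $P^-$-orbits (the paper phrases this via Lemma~\ref{changeg} and Lemma~\ref{invarianceK1covers}), and then run Fubini in the $(P^-,K)$-coordinates to conclude that the bad set of rotations has measure zero. Your formulation in terms of global left $P^-$-invariance of $\widetilde B$ is slightly cleaner than the paper's local-coordinate version near $\omega_0$, but the content is identical.
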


Let us first state to elementary Lemmas useful in the proofs. For
every $g\in SL(2,\R)$ and $\theta\in S^1$ let  us denote by $g
\cdot \theta\in S^1$  the action of $SL(2,\R)$ on $S^1$ determined
by $e^{i g\cdot \theta}=g(e^{i\theta})/|g(e^{i\theta})|$.

\begin{lemma}\label{changeg}
Let $(M,\omega)$ be a translation surface (not necessary compact).
Then for every $g\in SL(2,R)$ and $\theta\in S^1$ there exists
$s>0$ such that the directional flows $(\varphi^{g
\cdot\theta}_{st})_{t\in\R}$ on $(M,g\cdot \omega)$ and
$(\varphi^{\theta}_{t})_{t\in\R}$ on $(M,\omega)$ are
measure-theoretically isomorphic.
\end{lemma}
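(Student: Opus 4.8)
The plan is to realize the required isomorphism by the identity map of $M$, read as a map between the two translation structures. Recall that the $SL(2,\R)$-action on $\mathcal{Q}^{(1)}(M)$ is defined by postcomposing charts: if $\{z_\alpha\}$ is an atlas of coordinates adapted to $\omega$ on $M\setminus\Sigma$, then $\{g\circ z_\alpha\}$ is an atlas adapted to $g\cdot\omega$, with $g$ viewed as a linear automorphism of $\R^2\cong\C$. Hence the identity map $h:=\operatorname{Id}_M$, regarded as a map from $(M,\omega)$ to $(M,g\cdot\omega)$, is affine with constant derivative $g$ in these coordinates, and it is a homeomorphism fixing $\Sigma=\Sigma_\omega=\Sigma_{g\cdot\omega}$ pointwise.

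First I would check that $h$ is measure-preserving in the appropriate sense. Because $\det g=1$, the linear map $z\mapsto gz$ preserves Lebesgue measure on $\R^2$; reading this in adapted charts yields $h_*\nu_\omega=\nu_{g\cdot\omega}$. In the non-compact case these volumes are infinite but $\sigma$-finite, and $h$ is nonetheless a Borel bijection pushing one onto the other, so it is a measure-theoretic isomorphism of $(M,\nu_\omega)$ and $(M,\nu_{g\cdot\omega})$.

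Next I would track how $h$ transforms directional flows. In an adapted $\omega$-chart a unit-speed orbit of $(\varphi^\theta_t)_{t\in\R}$ is $z(t)=z(0)+te^{i\theta}$; applying $g$ it becomes $w(t)=w(0)+t\,g(e^{i\theta})=w(0)+t\,|g(e^{i\theta})|\,e^{i(g\cdot\theta)}$ in the corresponding $g\cdot\omega$-chart, by the very definition of the action $\theta\mapsto g\cdot\theta$ on $S^1$. Therefore, setting $s:=|g(e^{i\theta})|>0$, we get $h\circ\varphi^\theta_t=\varphi^{g\cdot\theta}_{st}\circ h$ on the set where both sides are defined. Finally, $h$ maps the full-measure invariant set $M_\theta$ of points regular for $(\varphi^\theta_t)$ on $(M,\omega)$ bijectively onto the full-measure invariant set $M_{g\cdot\theta}$ of points regular for $(\varphi^{g\cdot\theta}_t)$ on $(M,g\cdot\omega)$ (an orbit stays away from $\Sigma$ at all times precisely when its $h$-image does), so the restriction of $h$ to these sets is the desired measure-theoretic isomorphism between $(\varphi^\theta_t)$ on $(M,\omega)$ and $(\varphi^{g\cdot\theta}_{st})$ on $(M,g\cdot\omega)$.

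There is no genuine obstacle in this lemma; the only points needing care are the bookkeeping of the reparametrization constant $s=|g(e^{i\theta})|$ produced by the non-conformality of $g$, and the remark that in the non-compact setting ``measure-theoretic isomorphism'' is to be understood in the $\sigma$-finite sense, which is harmless since $h$ is an honest Borel measure-pushing bijection.
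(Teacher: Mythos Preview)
Your proof is correct and follows essentially the same route as the paper: both take the identity map of $M$ as the isomorphism, set $s=|g(e^{i\theta})|$, and verify that the flows coincide up to this time reparametrization while $\nu_{g\cdot\omega}=\nu_\omega$. The only cosmetic difference is that the paper phrases the computation intrinsically, checking $i_{sX^{g\cdot\theta}_{g\cdot\omega}}\omega=e^{i\theta}$ to conclude $sX^{g\cdot\theta}_{g\cdot\omega}=X^\theta_\omega$, whereas you carry out the equivalent calculation in adapted charts.
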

\begin{proof}
Let $s=s(g,\theta):=|g(e^{i\theta})|$. We claim  that $s X^{g\cdot\theta}_{g\cdot\omega}=X^{\theta}_{\omega}$. Indeed
\[i_{sX^{g\cdot\theta}_{g\cdot\omega}}\omega=sg^{-1}(i_{X^{g\cdot\theta}_{g\cdot\omega}}g\cdot\omega)
=sg^{-1}(e^{i g\cdot\theta})=g^{-1}(|g(e^{i\theta})|e^{i g\cdot\theta})=g^{-1}\circ g(e^{i\theta})=
e^{i\theta}\]
and since $X^{\theta}_{\omega}$ is defined by $i_{X^\theta_\omega}\omega = e^{i\theta}$, this proves the claim. From the claim, we also have
$\varphi^{g\cdot\omega,g\cdot\theta}_{st}=\varphi^{\omega,\theta}_{t}$
for every $t\in\R$. Since moreover, $\nu_{g\cdot\omega}=\nu_{\omega}$, the Lemma follows.
\end{proof}

\begin{lemma}\label{invarianceK1covers}
For every $\gamma\in  H_1(M,\Z)$ and $g\in SL(2,R)$ we have
$(\widetilde{M}_\gamma,\widetilde{g\cdot\omega}_\gamma)=
(\widetilde{M}_\gamma,g\cdot\widetilde{\omega}_{\gamma})$.
\end{lemma}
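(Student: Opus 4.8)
The plan is to exploit that the $\Z$-cover $\widetilde{M}_\gamma$ is an object of a purely topological nature. By the correspondence (\ref{correspondence}) recalled in \S\ref{Zcovers:sec}, the cover $p\colon\widetilde{M}_\gamma\to M$ is determined only by the underlying smooth surface $M$ and by the class $\gamma\in H_1(M,\Z)$, and it does not refer to the Abelian differential at all. The $\SL(2,\R)$-action replaces $\omega$ by a new Abelian differential $g\cdot\omega$ on the \emph{same} surface $M$ (it acts by postcomposition on the flat charts, see \S\ref{Teich:sec}), and it alters neither the topology of $M$ nor the homology class $\gamma$; hence the $\Z$-cover of $(M,g\cdot\omega)$ attached to $\gamma$ is literally the same cover $p\colon\widetilde{M}_\gamma\to M$. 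Therefore the whole statement reduces to comparing the two Abelian differentials living on this one fixed cover: by definition $\widetilde{(g\cdot\omega)}_\gamma=p^{*}(g\cdot\omega)$, while $g\cdot\widetilde{\omega}_\gamma=g\cdot(p^{*}\omega)$, so what I need to establish is the identity
\[
p^{*}(g\cdot\omega)=g\cdot(p^{*}\omega).
\]

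To prove this identity I would work in local flat coordinates. Near any point of $M\setminus\Sigma$ choose a chart $z$ with $\omega=dz$; since $p$ is a translation covering, it pulls flat charts back to flat charts, so $z\circ p$ is a flat chart for $p^{*}\omega$ on $\widetilde{M}_\gamma$. Regarding the chart as $\R^{2}$-valued and $g$ as the associated linear map, the $\SL(2,\R)$-action reads in charts as $z\mapsto g\circ z$, i.e.\ $g\cdot\omega=d(g\circ z)$ locally; pulling back and using linearity of $g$,
\[
p^{*}(g\cdot\omega)=d\bigl(g\circ z\circ p\bigr)=g\cdot d(z\circ p)=g\cdot(p^{*}\omega),
\]
which is the desired equality. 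One also notes that $\det g=1$, so $\nu_{g\cdot\omega}=\nu_{\omega}$ and the identification is automatically measure-preserving, which is exactly what is needed when the lemma is used to transport the properties \rm{(P-1)}--\rm{(P-3)} along the $\SL(2,\R)$-orbit in Proposition~\ref{Fubinilattice}.

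There is no genuine obstacle here; the only point to be attentive to is that the claim is an honest \emph{equality} of translation surfaces — same underlying manifold $\widetilde{M}_\gamma$, same covering map $p$, same holomorphic $1$-form — and not merely an isomorphism. This is precisely the content that becomes transparent once one uses the topological description of $\widetilde{M}_\gamma$ from \S\ref{Zcovers:sec} rather than a differential-geometric one, so the argument is essentially a bookkeeping verification.
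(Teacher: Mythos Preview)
Your proof is correct and follows essentially the same approach as the paper: both reduce the statement to the identity $p^{*}(g\cdot\omega)=g\cdot(p^{*}\omega)$ on the fixed topological cover $p\colon\widetilde{M}_\gamma\to M$. The paper dispatches this in a single chain of equalities $p^{*}(g\cdot\omega)=g\cdot\omega\circ p_{*}=g\cdot p^{*}(\omega)$, whereas you unpack the same computation in local flat charts; the content is identical.
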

\begin{proof}
 Denote by
$p:\widetilde{M}_\gamma\to M$ the covering map. It is enough to remark that for every
$g\in SL(2,\R)$ we get
$\widetilde{g\cdot\omega}_\gamma=p^*(g\cdot\omega)=g\cdot\omega\circ p_*=
g\cdot p^*(\omega)=g\cdot\widetilde{\omega}_{\gamma}$.
\end{proof}
\begin{proofof}{Proposition}{Fubinilattice}
To avoid undue repetition, we will write that a directional flow
satisfies (P-i) for $i \in \{1,2,3\}$, where (P-i) could be any of
the three properties (P-1), (P-2) or (P-3) in the statement of the
Lemma. The same proof indeed  applies for all three properties.
Since $(M,\omega_0)$ a lattice surface, we recall (see
\S\ref{Veech:sec}) that the $SL(2,\R)$-orbit of $(M,\omega_0)$
(denoted by $\mathscr{L}_{\omega_0}$) is closed in
$\mathcal{M}^{(1)}(M)$ and can be identified to
$SL(2,\R)/SL(M,\omega_0)$ by the map $\Phi:
SL(2,\R)/SL(M,\omega_0) \to \mathscr{L}_{\omega_0}$ that sends $g
\,SL(M,\omega_0)\in SL(2,\R)/SL(M,\omega_0)$ to $g\cdot
\omega_0\in \mathscr{L}_{\omega_0}$. Denote by $\mu_0$ the
canonical measure on $\mathscr{L}_{\omega_0}$.

Using the Iwasawa NAK
decomposition, if we denote as usual by
\begin{equation*}
g_t= \left(\begin{array}{cc} e^t & 0 \\ 0 & e^{-t}
\end{array}\right), \quad h_s= \left(\begin{array}{cc} 1 & 0 \\ s &
1 \end{array}\right), \quad \rho_\theta= \left(\begin{array}{cc}
\cos \theta & -\sin \theta \\ \sin \theta & \cos\theta
\end{array}\right)
\end{equation*}
we can choose an open neighbourhood  $\mathscr{U}\subset \mathscr{L}_0$ of
$\omega_0 $ of the form
\begin{equation*}
\mathscr{U} = \{  \omega  \in \mathscr{L}_0: \,
\omega  = h_s g_t \rho_\theta \cdot  \omega_0  \,
\text{where} \,  (t,s, \theta)\in (-\epsilon, \epsilon)^2 \times
S^1\}
\end{equation*}
for some $\epsilon>0$. 
By assumption, for $\mu_0$-a.e.  $ \omega  \in \mathscr{U}$,
the vertical flow $(\widetilde{\varphi}^v_t)_{t\in\R}$ on
$(\widetilde{M}_\gamma,\widetilde{\omega}_\gamma)$ satisfies (P-i).
Moreover, since $\mu_0$ is the pull-back by $\Phi$ of the Haar
measure on $ SL(2,\R)/SL(M,\omega_0)$ which is locally equivalent
to the product Lebesgue measure in the coordinates $(t,s,\theta)$,
it follows that for Lebesgue almost every $(t,s,\theta)\in
(-\epsilon, \epsilon)^2 \times S^1$, the vertical flow
$(\widetilde{\varphi}^v_t)_{t\in\R}$ on $ (\widetilde{M}_\gamma,
\widetilde{(h_s g_t \rho_\theta \cdot\omega_0)}_\gamma)$, which by
Lemma~\ref{invarianceK1covers} is
metrically isomorphic to $(\widetilde{M}_\gamma, h_s g_t \rho_\theta
\cdot\widetilde{(\omega_0)}_\gamma)$, also satisfies (P-i).



Denote by $S_0\subset S^1$ the subset of all $\theta\in S_0$ for
which the directional flow $\widetilde{\varphi}^\theta_t$ on
$(\widetilde{M}_\gamma, \widetilde{(\omega_0)}_{\gamma})$ does not
satisfy (P-i). By Lemma~\ref{changeg}, if $\theta\in S_0$ then
also the vertical flow $\widetilde{\varphi}_t^{v}$ on
$(\widetilde{M}_\gamma, \rho_{\pi/2-\theta} \cdot
\widetilde{(\omega_0)}_{\gamma})$ does not satisfy (P-i).
Moreover, since the vertical direction $\pi/2\in S^1$ is fixed
both by $h_s$ and $g_t$, i.e. $h_s \cdot \frac{\pi}{2} =
\frac{\pi}{2} $ and $g_t \cdot \frac{\pi}{2} = \frac{\pi}{2} $ for
any $s, t \in \mathbb{R}$, Lemma~\ref{changeg} also implies that
the flow $\widetilde{\varphi}_t^{{v}}$ on $(\widetilde{M}_\gamma,
 h_s g_t\rho_{\pi/2-\theta} \cdot \widetilde{(\omega_0)}_{\gamma})$
does not satisfy (P-i) for all $(t,s)\in (-\epsilon,\epsilon)^2$. It follows
that for every $(t,s,\theta)\in
(-\epsilon,\epsilon)^2\times(\pi/2-S_0)$ the vertical flow
$\widetilde{\varphi}_t^{{v}}$ on $(\widetilde{M}_\gamma,
 h_s g_t\rho_{\theta} \cdot \widetilde{(\omega_0)}_{\gamma})$
does not satisfy (P-i). Therefore the set
$(-\epsilon,\epsilon)^2\times(\pi/2-S_0)$ has zero Lebesgue
measure and hence $S_0$ has zero Lebesgue measure. Thus, we
conclude that for any  $\mathbb{Z}$-cover $(\widetilde{M}_\gamma,
\widetilde{(\omega_0)}_\gamma)$ of $(M, \omega_0)$ given by a
non-zero $\gamma \in K_1\cap H_1(M,\Z)$, for almost every $\theta
\in S^1$, the directional flow
$(\widetilde{\varphi}^\theta_{t})_{t\in\R}$ on
$(\widetilde{M}_\gamma, \widetilde{(\omega_0)}_{\gamma})$
satisfies (P-i).
\end{proofof}

\begin{proofof}{Theorem}{maintheorem}
Let  $(M,\omega_0)$ is a square-tiled surface of genus $2$. The
canonical probability measure $\mu_0$  on $\mathscr{L}_{\omega_0}$
is ergodic (see \S\ref{Veech:sec}) and, by Theorem \ref{Bain}, is
KZ-hyperbolic. Moreover, setting $K_1= H_1^{(0)}(M,\Q)$ and
$K_1^\perp= H_1^{st}(M,\Q)$ (see \S \ref{Veech:sec}), one can
check, as in the proof of Corollary \ref{theointro}, that the
assumptions of Theorem
 \ref{non-ergodicitycriterion} hold and that,
in view of (\ref{kerhol}), the recurrent
$\mathbb{Z}$-covers are exactly the $\mathbb{Z}$-covers
$(\widetilde{M}_\gamma, \widetilde{\omega}_\gamma) $ given by
$\gamma \in K_1 \cap H_1(M,\Z)$.  Thus, by Corollary \ref{cor_UMEC}, for
$\mu_0$-almost every $\omega \in \mathscr{L}_{\omega_0}$, for any
recurrent $\mathbb{Z}$-cover $(\widetilde{M}_\gamma,
\widetilde{\omega}_\gamma)$ of $(M, \omega)$ given by a non-zero
$\gamma$ the vertical flow $(\widetilde{\varphi}_t^{v})_{t\in\R}$
on $(\widetilde{M}_\gamma, \widetilde{\omega}_\gamma)$  is not
ergodic and has no invariant set of finite measure and has
uncountably many ergodic components. Thus, the claim  follows from
Proposition~\ref{Fubinilattice}.
\end{proofof}

\begin{proofof}{Corollary}{non-divergence}
Denote by $Z_{(3,0)}$  the square-tiled translation surface
corresponding to the polygon drawn in Figure~\ref{example3} with
edges labeled by the same letter identified by translations. One
can verify that $Z_{(3,0)} \in\mathcal{H}(2)$.
Consider the homology class $\gamma = [B]-[D]$ which  is non
trivial but has trivial holonomy. One can check that the
$\Z$-cover of $Z_{(3,0)}$ associated to $\gamma$ gives exactly the
infinite staircase translation surface $Z_{(3,0)}^\infty$. Thus,
Theorem~\ref{maintheorem} applied to this surface shows that the
directional flow on $Z_{(3,0)}^\infty$ is not ergodic and has no
invariant set of finite measure for almost every direction.
\end{proofof}
\begin{remark}
A similar  proof shows that any surface in the family
$Z_{(a,b)}^\infty$ with $(a,b) \in \mathbb{N}^2$, $b>2$, described
by Hubert-Schmithüsen in \cite{Hu-Sch} satisfy the same conclusion
of Corollary \ref{non-divergence}.
\end{remark}

\subsection{Non-ergodicity for  billiards in the infinite
strip.}\label{strip}
\begin{proofof}{Theorem}{stripbilliard}
Let us consider the billiard flow on the table $T(l)$ in Figure
\ref{fig_bil}. Denote by $\Gamma$ the $4$-elements group of
isometries of $S^1$ generated by the reflections $\theta\mapsto
-{\theta}$, $\theta\mapsto \pi -{\theta}$.
Using the unfolding process described in  \cite{Ka-Ze} (see for
example \cite{Ma-Ta}),  one can verify that, for every direction
$\theta\in S^1$  the flow $(b^{\theta}_t)_{t\in\R}$ is isomorphic
to the directional flow
$(\widetilde{\varphi}^{\theta}_t)_{t\in\R}$ on a non-compact
translation surface $(\widetilde{M},\widetilde{\omega}_l)$, where
$(\widetilde{M},\widetilde{\omega}_l)$ is the translation surface
resulting from gluing, along segments with the same name, four
copies of $T(l)$, one for each element of $\Gamma$, according to
the action of $\Gamma$, as shown in the Figure~\ref{unfold1}.
\begin{figure}[h]
\includegraphics[width=.76\textwidth]{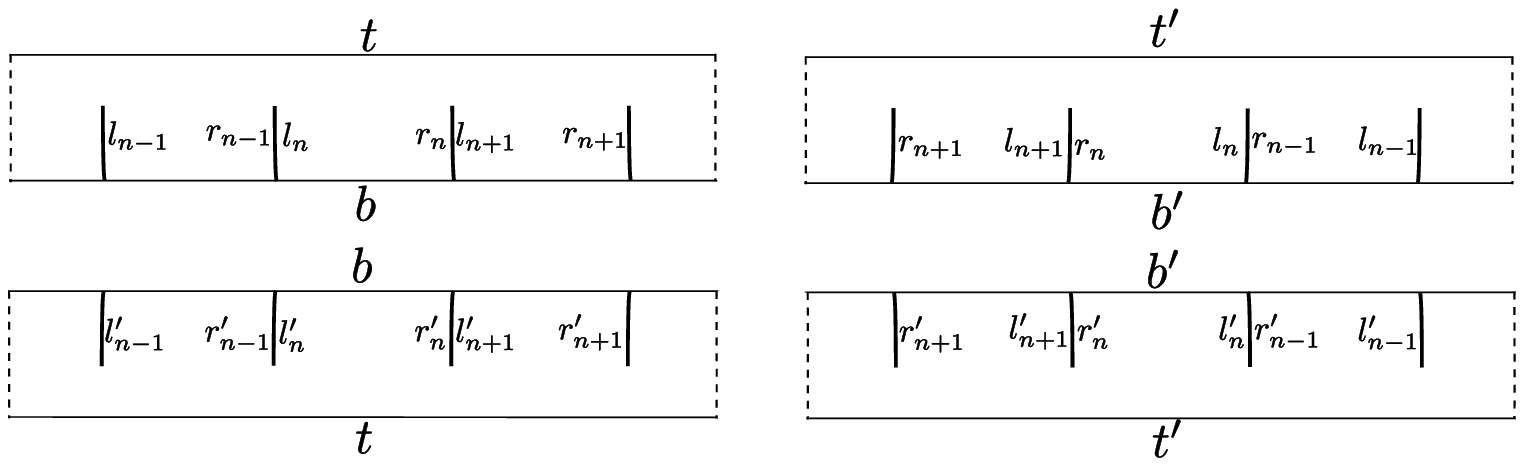}
\caption{}\label{unfold1}
\includegraphics[width=.66\textwidth]{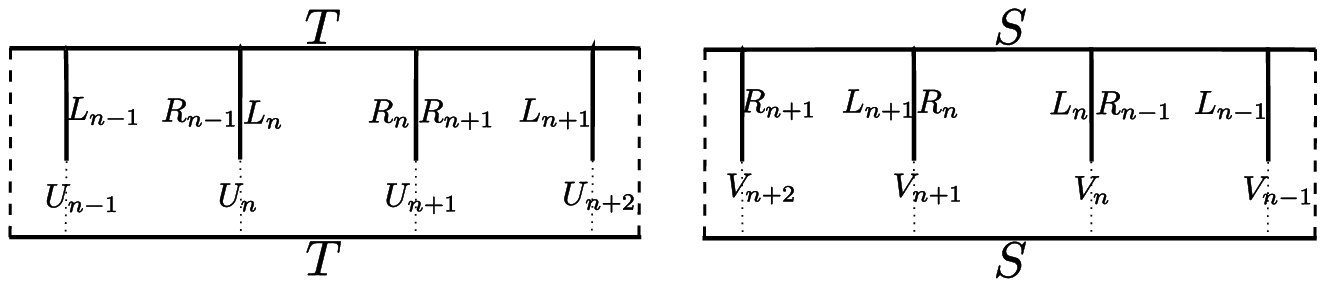}
\caption{}\label{unfold2}
\end{figure}
The surface $(\widetilde{M},\widetilde{\omega}_l)$ can be
represented as gluing two $\Z$-periodic polygons, as shown in the
Figure~\ref{unfold2}, where $R_n=r_n\cup r'_n$ and $L_n=l_n\cup
l'_n$.
Let  us cut these polygons along the segments marked as $U_n$,
$V_n$, $n\in\Z$, to obtain rectangles $P_n$, $P'_n$ and let us
glue $P_n$ and $P_n'$ along the segment $R_n$ (see the
Figure~\ref{unfold3}).
\begin{figure}[h]
\includegraphics[width=0.80\textwidth]{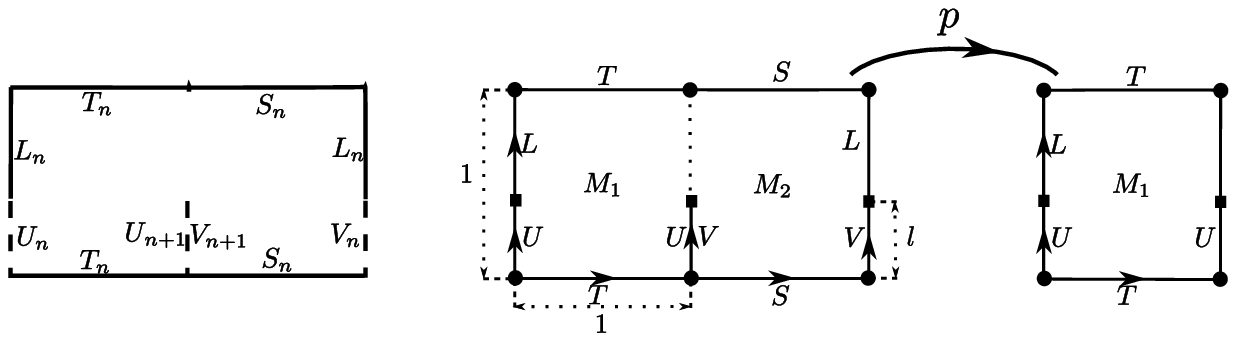}
\caption{}\label{unfold3}
\end{figure}
It follows that $(\widetilde{M},\widetilde{\omega}_l)$ is a
$\Z$-cover of the compact translation surface $(M,\omega_l)$
presented in the  Figure~\ref{unfold3}. More precisely,
$(\widetilde{M},\widetilde{\omega}_l)=
(\widetilde{M}_{\gamma},\widetilde{(\omega_l)}_{\gamma})$, where
$\gamma=[V-U]$ has trivial holonomy.

\subsubsection*{(1) Case $l$ rational.}
One can verify that for any $l\in (0,1)$, $(M,\omega_l)\in
\mathcal{H}(1,1)$, thus, in particular, $M$ has genus $2$. The
assumption that $l \in \mathbb{Q}$ guarantees that $(M,\omega_l)$
is square-tiled. Thus,  in this case we can apply
Theorem~\ref{maintheorem} that implies that for almost every
$\theta\in S^1$  the directional flow
$(\widetilde{\varphi}^\theta_t)_{t\in\R}$ on
$(\widetilde{M},\widetilde{\omega_l})$ and hence the billiard flow
$(b^\theta_t)_{t\in\R}$ on $T(l)$ is  not ergodic, has no
invariant sets of finite measure and has uncountably many ergodic
components.

\subsubsection*{(2) Full measure set of values of the parameter $l$.}
Let us remark that $(M, \omega_l)$  can be obtained from two
identical copies $(M_1, \omega_l^1)$, $(M_2, \omega_l^2)$
(corresponding to the two rectangles in Figure \ref{unfold3}) of a
genus $1$ translation surface with a slit (i.e. a straight segment
connecting two marked points), by identifying each side of the
slit in $(M_1, \omega_l^1)$ with the opposite side of the slit in
$(M_2, \omega_l^2)$. In particular, this shows that $(M,
\omega_l)$ is a \emph{branched $2$-cover of the torus} $(M_1,
\omega_l^1)$ with covering map given by the projection $p:M\to
M_1$. Denote by $\tau:M\to M$ the only non-trivial element of the
deck group of the covering $p:M\to M_1\approx \mathbb{T}^2$.
Denote by $\mathscr{L}$ the locus
\[\{ \omega \in \mathcal{H}^{(2)}(1,1):\tau^*\omega=\omega\}.\]
Equivalently, $\omega\in\mathscr{L}$ if and only if
$\omega=p^*\omega_0$ for some $\omega_0\in
\mathcal{H}^{(1)}(0,0)$, where  $\mathcal{H}^{(1)}(0,0)$ is the
stratum of a genus one translation surface with two marked points.
Therefore, $\mathscr{L}$ is the $2$-cover of the moduli space
stratum $\mathcal{H}^{(1)}(0,0)$ and therefore $\mathscr{L}$ has
dimension five, which is the dimension of
$\mathcal{H}^{(1)}(0,0)$. Moreover, $\mathscr{L}$ carries a
natural $SL(2, \R)$-invariant measure $\mu_{\mathscr{L}}$, which
is simply the pull-back of the canonical measure on the stratum
$\mathcal{H}^{(1)}(0,0)$ via the covering map $p$.  Let us
consider the decomposition $H_1(M,\Q)=K_1\oplus K_1^\perp$, where
\[K_1:=\{\gamma\in H_1(M,\Q):\tau_*\gamma=-\gamma\}\text{ and }
K_1^\perp:=\{\gamma\in H_1(M,\Q):\tau_*\gamma=\gamma\}.\] This is
an orthogonal decomposition. Indeed, if $\gamma_1\in K_1$ and
$\gamma_2\in K_1^\perp$ then
\[\langle\gamma_1,\gamma_2\rangle=\langle\tau_*\gamma_1,\tau_*\gamma_2\rangle=-\langle\gamma_1,\gamma_2\rangle
\quad\Longrightarrow\quad\langle\gamma_1,\gamma_2\rangle=0.\]
Moreover, $\dim_\Q K_1=\dim_\Q K_1^\perp=2$. Remark that the
homology class $\gamma=[V-U]$ which determines the
$\mathbb{Z}$-cover $(\widetilde{M}, \widetilde{\omega}_l)$ belongs
to $K_1$.

Let $p_*: H_1(M ,\Q) \to H_1(\mathbb{T}^2,\Q) $ be  the action
induced on $\Q$-homology by the covering map $p: M \to M_1$. If
$\tau_*\gamma=-\gamma$ then
$-p_*\gamma=p_*\tau_*\gamma=(p\circ\tau)_*\gamma=p_*\gamma$, hence
$K_1$ is a subspace of the kernel $\ker_\Q p_*$. Since $\dim_\Q
K_1=2=\dim_\Q \ker_\Q p_*$, we have $K_1= \ker_\Q p_*$.
Let $\phi\in\Gamma(M)$ an element of the mapping-class group such
that $\omega_2=\phi^*\omega_1$ for
$\omega_1\,\Gamma(M)=\omega_2\,\Gamma(M)\in\mathscr{L}$. Then
there exists $\phi_0\in\Gamma(M_1)$ such that $p\circ
\phi=\phi_0\circ p$. It follows that  $p_*\gamma=0$ implies
$p_*(\phi_*\gamma)=(\phi_0)_*(p_*\gamma)=0$, so $\phi_*K_1= K_1$.
Since $K_1^\perp$ is the symplectic orthocomplement of $K_1$ in
$H_1(M,\Q)$, we obtain $\phi_*K_1^\perp=K_1^\perp$. Consequently,
\[\{ H_1(M_\omega,\Q)=K_1\oplus K_1^\perp, \; \omega\in\mathscr{L}\}\]
is an orthogonal invariant splitting which is constant on
$\mathscr{L}$. Let $\mathcal{K}_1$ and $\mathcal{K}_1^\perp$ be
the associated invariant subbundles over $\mathscr{L}$.

Since the canonical measure on $\mathcal{H}^{(1)}(0,0)$ is ergodic
for the Teichm\"uller flow (see \cite{Ma}) and $\mathscr{L}$ is a
connected cover of $\mathcal{H}^{(1)}(0,0)$ whose covering map is
equivariant with respect to the $SL(2,\R)$-action, it follows (for example by the Hopf argument) that
also the measure $\mu_{\mathscr{L}}$ on $\mathscr{L}$ is ergodic
for the Teichm\"uller flow. Thus, since $\mu_{\mathscr{L}}$ is an
$SL(2, \R)$-invariant  measure and ergodic for the Teichm\"uller
flow on $\mathcal{H}(1,1)$, which is a genus two stratum,
$\mu_{\mathscr{L}}$ is KZ-hyperbolic (see Theorem \ref{Bain}). In
particular, since there are no zero exponents, the Lyapunov
exponents of the invariant subbundle $\R\otimes_\Q\mathcal{K}_1$
(see \S\ref{Veech:sec}) are both non zero.
Thus, $\mathscr{L}$, $\mu_{\mathscr{L}}$ and  $\mathcal{K}_1$
satisfy all the assumptions of Theorem
\ref{non-ergodicitycriterion}. It follows that for there exists a
set $\mathscr{L}'\subset \mathscr{L}$ such that
$\mu_{\mathscr{L}}(\mathscr{L}')=1$ and for all $\omega \in
\mathscr{L}'$ and all non-zero $\gamma \in K_1\cap H_1(M,\Q)$, the
vertical flow $(\widetilde{\varphi}_t^{v})_{t\in\R}$ on
$(\widetilde{M}_\gamma, \widetilde{\omega}_\gamma)$ is not
ergodic, and by Corollary~\ref{cor_UMEC} that it has uncountably
many ergodic components. Let us now show that this allows to
deduce the desired conclusion by a Fubini argument.

Since  $\mathscr{L}$ is a $2$-cover of $\mathcal{H}^{(1)}(0,0)$,
local coordinates on $\mathscr{L}$ are given by the relative
periods for the marked torus $(M_1, \omega_l^1)$ (see
\S\ref{Veech:sec}).  We will deal with an open subset
$\mathcal{V}$  in $\mathscr{L}$ constructed as follows. Denote by
$\{ \gamma_1, \gamma_2,\gamma_3  \}$ the basis of
$H_1(M_1,\Sigma_1, \Z)$ given by $\gamma_1=[U]$ $\gamma_2=[U\cup
L]$, $\gamma_3=[T]$, see Figure \ref{unfold3}. Then $\{ \gamma_1,
\gamma_2,\gamma_3, \tau_*\gamma_1, \tau_*\gamma_2,\tau_*\gamma_3
\}$ is a family of generators of $H_1(M,\Sigma, \Z)$. Let us
consider
\begin{equation}\label{coordinates}
\begin{aligned} &(x_1, x_2, x_3):=  \Big( \int_{\gamma_1}\Re \omega ,
\int_{\gamma_2}\Re \omega,\int_{\gamma_3}\Re \omega\Big)=  \Big(
\int_{\tau_*\gamma_1}\Re \omega ,
\int_{\tau_*\gamma_2}\Re \omega,\int_{\tau_*\gamma_3}\Re \omega\Big),\\
&(y_1, y_2, y_3):= \Big( \int_{\gamma_1}\Im \omega,
\int_{\gamma_2}\Im \omega,\int_{\gamma_3}\Im \omega \Big)= \Big(
\int_{\tau_*\gamma_1}\Im \omega, \int_{\tau_*\gamma_2}\Im
\omega,\int_{\tau_*\gamma_3}\Im \omega \Big).
\end{aligned}
\end{equation}
Since we are considering  abelian differentials of area $2$, the
coordinates (\ref{coordinates}) are not all independent
($x_2y_3-x_3y_2=1$), but one of them, say $y_3$, is determined by
the area one requirement. Thus, $(\underline{x},
\underline{y}):=(x_1, x_2, x_3, y_1, y_2)$ are independent
coordinates on a subset of $\mathscr{L}$ and denote by
$\omega(\underline{x}, \underline{y})\in \mathscr{L}$ the
corresponding differential. Then $\omega(0,0,1,l,1)=\omega_l$ for
every $l\in(0,1)$.  Denote by $\mathcal{V}\subset\mathscr{L}$ the
open sets of all $\omega(\underline{x}, \underline{y})\in
\mathscr{L}$ with $x_1,x_2\neq 0$.


Fix a non-zero $\gamma\in K_1\cap H_1(M,\Z)$. Recall that, in view
of \S\ref{reduction:sec} (see Lemma~\ref{lem_flow_auto} and choose
$I$ as at the end of \S\ref{reduction:sec} so that
(\ref{parameters}) holds), for every $\omega\in\mathscr{L}$ there
exists a horizontal interval $I\subset M$ and $\gamma_\alpha\in
H_1(M,\Z)$, $\xi_\alpha\in H_1(M,\Sigma,\Z)$ for
$\alpha\in\mathcal{A}$ such that the vertical flow
$(\widetilde{\varphi}^v_t)_{t\in\R}$ on
$(\widetilde{M}_\gamma,\widetilde{\omega}_\gamma)$ has a special
representation built over the skew product $T_\psi:I\times\R\to
I\times\R$ such that for every $\alpha\in \mathcal{A}$
\[\lambda_\alpha =\int_{\xi_\alpha}\Re\omega\text{ and }
\psi(x)=\langle\gamma,\gamma_\alpha\rangle,\quad
Tx=x+\int_{\gamma_\alpha}\Re\omega
\text{ for } x\in I_\alpha.\]
For every $(M,\omega_0)\in\mathcal{V}$ we can
choose a neighbourhoood $\mathcal{U}\subset \mathcal{V}$ of
$\omega_0$ such that $\gamma_\alpha$ and $\xi_\alpha$,  for
$\alpha\in\mathcal{A}$, do not depend on $\omega\in\mathcal{U}$.

Let us adopt the following convention: let us say that a flow has
property (P-1) if it is not ergodic and property (P-2) if it has
uncountably many ergodic components. We claim that, if  $\omega_1
= \omega(\underline{x}_1, \underline{y}_1), \omega_2 =
\omega(\underline{x}_2, \underline{y}_2)\in \mathcal{U}$ with
$\underline{x}_1=\underline{x}_2$, then the vertical flow
$(\widetilde{\varphi}_t^{v})_{t\in\R}$ on
$(\widetilde{M}_\gamma\widetilde{(\omega_1)}_\gamma)$ has property
(P-i) for i $ \in \{1,2\}$ if and only if the vertical flow
$(\widetilde{\varphi}_t^{v})_{t\in\R}$ on
$(\widetilde{M}_\gamma\widetilde{(\omega_2)}_\gamma)$ has property
(P-i). Indeed, if $\underline{x}_1=\underline{x}_2$
 then
$\int_{\gamma_i}\Re\omega_1=\int_{\gamma_i}\Re\omega_2$ and
$\int_{\tau_*\gamma_i}\Re\omega_1=\int_{\tau_*\gamma_i}\Re\omega_2$
for $i=1,2,3$. Thus
$\int_{\gamma_\alpha}\Re\omega_1=\int_{\gamma_\alpha}\Re\omega_2$,
$\int_{\xi_\alpha}\Re\omega_1=\int_{\xi_\alpha}\Re\omega_2$
for all $\alpha\in\mathcal{A}$. It follows that both vertical
flows have special representations built over the same skew
product, which proves our claim.

Let us consider the diffeomorphism
$\Upsilon:(0,1)\times((0,2\pi)\setminus\{\pi/2,\pi,3\pi/2\})\times\R^3\to\R^5$
\[\Upsilon(l,\theta,t,y_1,y_2)=(-e^tl\cos\theta,-e^t\cos\theta,e^t\sin\theta,
e^{-t}(y_1+l\sin\theta),e^{-t}(y_2+\sin\theta)).\] The diffeomorphism $\Upsilon$ is defined so that we have
\bes
g_t\rho_{\pi/2-\theta}\omega_l=\omega(\Upsilon(l,\theta,t,0,0)), \qquad \forall l\in [0,1], \theta \in S^1, t \in \R.
\ees
Denote by $\mathcal{V}_0 \subset (0,1)\times((0,2\pi)\setminus\{\pi/2,\pi,3\pi/2\})\times\R \times \R^2$ (respectively $\mathscr{L}'_0$) the preimage of
$\mathcal{V}$ (respectively $\mathscr{L}'$) by the map
$(l,\theta,t,\underline{y})\mapsto\omega(\Upsilon(l,\theta,t,\underline{y}))$
and by $\mu_0$ the pullback of $\mu_{\mathscr{L}}$ by this map.
Since $\Upsilon$ is a diffeomorphism, the measure $\mu_0$ is
equivalent to the Lebesgue measure on $\mathcal{V}_0$, hence
$\mathcal{V}_0\setminus\mathscr{L}'_0$ has zero Lebesgue measure.

For $i=1,2$, denote by $\neg \mathcal{P}_i\subset(0,1)\times((0,2\pi)\setminus\{\pi/2,\pi,3\pi/2\})$ the set
of all $(l,\theta)$ such that the directional flow
$(\widetilde{\varphi}^\theta_t)_{t\in\R}$ on
$(\widetilde{M}_\gamma,\widetilde{(\omega_l)}_\gamma)$ does not have property (P-i).
We claim that $\neg \mathcal{P}_i$ has zero Lebesgue measure. If fact, we need to
show that for every $(l,\theta)\in \neg \mathcal{P}_i$ there exists a neighbourhood
$(l,\theta)\in \mathcal{U}$ such that $\neg \mathcal{P}_i\cap\mathcal{U}$ has zero
Lebesgue measure.

Fix $(l_0,\theta_0)\in \neg \mathcal{P}_i$. By Lemmas~\ref{invarianceK1covers}~and~\ref{changeg},
$(\widetilde{\varphi}^v_t)_{t\in\R}$
on $(\widetilde{M}_\gamma,\widetilde{(\rho_{\pi/2-\theta_0}\cdot\omega_{l_0})}_\gamma)$ is metrically isomorphic to $(\widetilde{\varphi}^{\theta_0}_{st})_{t\in\R}$ on
$(\widetilde{M}_\gamma,\widetilde{(\omega_l)}_\gamma)$ for some $s>0$,
and hence also does \emph{not} have property (P-i).
Since $\rho_{\pi/2-\theta_0}\cdot\omega_{l_0}\in\mathcal{V}$,
there exists a neighbourhood
of $\rho_{\pi/2-\theta_0}\cdot\omega_{l_0}\in\mathcal{U}$ such that for all
$\omega(\underline{x}_1,\underline{y}_1),\omega(\underline{x}_2,\underline{y}_2)\in\mathcal{U}$
with $\underline{x}_1=\underline{x}_2$ the vertical flows on
$(\widetilde{M}_\gamma,\widetilde{(\omega_1)}_\gamma)$ and $(\widetilde{M}_\gamma,\widetilde{(\omega_2)}_\gamma)$
have special representations over the same skew product.
Let $\mathcal{U}_1 \ni (l,\theta)$, $(-\vep,\vep)$ and $\mathcal{U}_2 \ni (0,0)$ be neighbourhoods
such that $\Upsilon(\mathcal{U}_1\times(-\vep,\vep)\times\mathcal{U}_2)\subset\mathcal{U}$.
We claim that
\begin{equation}\label{empty_inter}
(\neg \mathcal{P}_i\cap\mathcal{U}_1)\times(-\vep,\vep)\times\mathcal{U}_2\cap\mathscr{L}'_0=\emptyset.
\end{equation}
Indeed, if $(l,\theta)\in \neg \mathcal{P}_i\cap\mathcal{U}_1$ then $(\widetilde{\varphi}^v_t)_{t\in\R}$
on $(\widetilde{M}_\gamma,\widetilde{(\rho_{\pi/2-\theta}\cdot\omega_{l})}_\gamma)$ does not have property (P-i).
 Moreover, $\rho_{\pi/2-\theta}\cdot\omega_{l}=
\omega(\Upsilon(l,\theta,0,0,0))$ and $\Upsilon(l,\theta,0,0,0)\in\mathcal{U}$.
Therefore, for every $\underline{y}\in\mathcal{U}_2$
the vertical flow on $(\widetilde{M}_\gamma,\widetilde{\omega(\Upsilon(l,\theta,0,\underline{y}))}_\gamma)$
does not have property (P-i). Since every $g_t$ fixes the vertical direction, by
Lemmas~\ref{invarianceK1covers}~and~\ref{changeg},
the vertical flow on $(\widetilde{M}_\gamma,\widetilde{(g_t\cdot\omega(\Upsilon(l,\theta,0,\underline{y})))}_\gamma)$
does not have property (P-i) for every $t\in(-\vep,\vep)$. Since $g_t\cdot\omega(\Upsilon(l,\theta,0,\underline{y}))=\omega(\Upsilon(l,\theta,t,\underline{y}))$, it follows that
the vertical flow on
$(\widetilde{M}_\gamma,\widetilde{(\omega(\Upsilon(l,\theta,t,\underline{y})))}_\gamma)$
does not have property (P-i) for every $(l,\theta,t,\underline{y})\in(\neg \mathcal{P}_i\cap\mathcal{U}_1)\times(-\vep,\vep)\times\mathcal{U}_2\subset\mathcal{V}_0$,
which proves \eqref{empty_inter}. In view of the fact that $\mathcal{V}_0\setminus\mathscr{L}'_0$ has
zero Lebesgue measure, the product set $(\neg \mathcal{P}_i\cap\mathcal{U}_1)\times(-\vep,\vep)\times\mathcal{U}_2$
and hence $\neg \mathcal{P}_i\cap\mathcal{U}_1$ has zero Lebesgue measure.

Thus, we conclude that for every non-zero $\gamma\in K_1\cap
H_1(M,\Z)$ there exists a set $\Lambda \subset (0,1)$ of full
Lebesgue measure such that for every $l\in \Lambda$ for almost
$\theta \in S^1$ the directional flow
$(\widetilde{\varphi}^\theta_t)_{t\in\R}$ on the
$\mathbb{Z}$-cover
$(\widetilde{M}_\gamma,\widetilde{(\omega_l)}_\gamma)$ have both
properties (P-1) and  (P-2). This in particular applies to the
$\mathbb{Z}$-cover that is given by $\gamma=[V-U] \in K_1$.
Consequently, for any $l\in \Lambda$ the billiard flow
$(b_t^\theta)_{t\in\R}$ on $T(l)$ is not ergodic  and it has
uncountably many ergodic components  for almost every direction
$\theta \in S^1$.
\end{proofof}

\subsection{Non-ergodicity of the Erhenfest windtree model.}\label{Ehrenfest:finalsec}
Let us now prove Theorem \ref{Ehrenfestthm} and Corollary \ref{Ehrenfestcor}.

\begin{proofof}{Theorem}{Ehrenfestthm}
Let  us consider the $\mathbb{Z}$-periodic Ehrenfest billiard flow
$(e_t^\theta)_{t\in\R}$ on the tube $E_1(a,b)$ in Figure
\ref{Ehrenfesttube}. Let us denote  by $\Gamma$ the $4$-elements
group of isometries of the plane generated by $\langle \tau^h,
\tau^v\rangle $, where $\tau^h$ denotes the horizontal  reflection
$(x,y)\mapsto (x,-y)$ and $\tau^v$ denotes the vertical reflection
$(x,y)\mapsto (-x,y)$ ($\Gamma$ is  the Klein four-group
$\mathbb{Z}_2 \times \mathbb{Z}_2$). By the unfolding process (see
\cite{Ka-Ze}),  for every direction $\theta\in S^1$ the flow
$(e^{\theta}_t)_{t\in\R}$ on $E_1(a,b)$ is isomorphic to the
directional flow $(\widetilde{\varphi}^{\theta}_t)_{t\in\R}$ on a
non-compact translation surface
$(\widetilde{M},\widetilde{\omega}_{a,b})$ which is obtained by
gluing  four copies of $E_1(a,b)$, one for each element of the
group $\Gamma$, according to action of $\Gamma$. This translation
surface is a $\mathbb{Z}$-cover of a compact translation surface
$({M},{\omega}_{a,b})$ shown in Figure \ref{fundamental domain}
and the cover is given by $\sigma=v_{00}-v_{10}+v_{01}-v_{11}\in
H_1(M,\Z)$ (referring to the labelling of Figure \ref{fundamental
domain}). The surface $M$ is glued from four copies of a
fundamental domain $F(a,b):=E_1(a,b)\cap([0,1)\times(\R/\Z))$ for
the natural $\mathbb{Z}$-action (generated by the translation by
the vector $(1,0)$) on the tube $E_1(a,b)$.
\begin{figure}[h]
\includegraphics[width=1\textwidth]{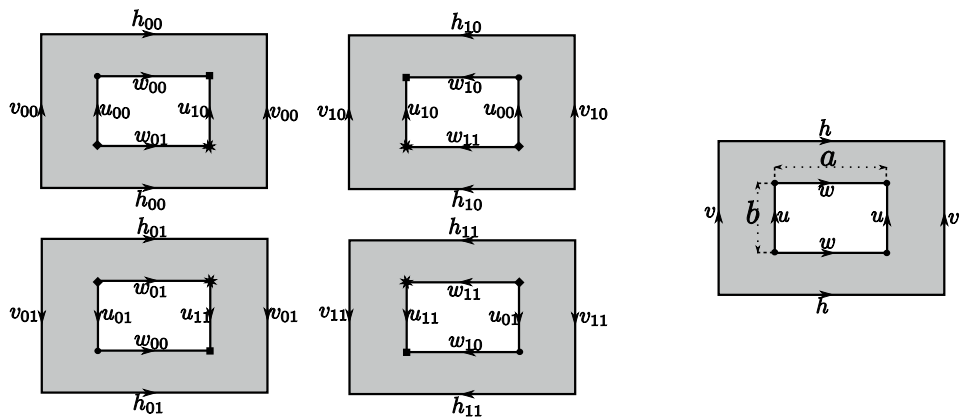}
\caption{Translation surfaces $(M,\omega_{a,b})$ and $(N,\nu_{a,b})$}\label{fundamental
domain}
\end{figure}
Thus, if we denote by $({N},{\nu}_{a,b})$ the translation surface
obtained from the fundamental domain $F(a,b)$ gluing the sides
according to the identifications in Figure \ref{fundamental
domain},  the translation surface $({M},{\omega}_{a,b})$ is a
cover of $({N},{\nu}_{a,b})$ with the deck group $\Gamma$. Let us
denote by $p:M\to N$ the covering map\footnote{We remark that this
surface  is the same that the surface is obtained by considering a
fundamental domain for the $\mathbb{Z}^2$-action on the planar
billiard table $E_2(a,b)$, which   is described in detail in
\cite{DHL} (see \S3).}. One can check that $({N},{\nu}_{a,b})$ has
genus two and belongs to the stratum $\mathcal{H}(2)$, while
$({M},{\omega}_{a,b})$ has genus $5$ and belongs to
$\mathcal{H}(2,2,2,2)$. By abuse of notation, we continue to write
$\omega_{a,b}$ for
$\omega_{a,b}/A(\omega_{a,b})=\omega_{a,b}/(4(1-ab))\in
\mathcal{H}^{(1)}(2,2,2,2)$. Let
\[\mathscr{L}=\{\omega\in \mathcal{H}^{(1)}(2,2,2,2):\omega=\frac{1}{4} p^*\nu,\,\nu\in \mathcal{H}^{(1)}(2)\}.\]
Then $\mathscr{L}$ is a closed $SL(2,\R)$-invariant subset of
$\mathcal{H}^{(1)}(2,2,2,2)$ which is a finite connected cover of
$\mathcal{H}^{(1)}(2)$ and $\omega_{a,b}\in\mathscr{L}$. The orbit
closures and the $SL(2,\R)$-invariant measures on
$\mathscr{H}^{(1)}(2)$ were classified by McMullen in \cite{McM}
and give a classification of orbit closures and the
$SL(2,\R)$-invariant measures on $\mathscr{L}$. From \cite{McM}
(see also \cite{DHL}), it follows that if $(a,b)$ satisfy
assumption $(1)$ or $(2)$ in Theorem \ref{Ehrenfestthm},
$(M,{\omega}_{a,b})$ is a Veech surface and its $SL(2,\R)$-orbit
is closed and carries the canonical $SL(2,\R)$-invariant measure.
Let us consider the $SL(2,\R)$-invariant measure $\mu_\mathscr{L}$
on $\mathscr{L}$ obtained by pull back by the finite covering map
of the canonical measure on $\mathcal{H}^{(1)}(2)$. Since the
canonical measure is ergodic and the cover $\mathscr{L}$ is
connected, each of these measures on $\mathscr{L}$ is ergodic.

Let $\tau^h_*, \tau^v_*$  be the maps induced on the homology
$H_1(M, \mathbb{Z})$ by the actions of the reflections $\tau^h,
\tau^v$ on $(M,{\omega}_{a,b})$. Consider the following orthogonal
decomposition
\begin{equation}\label{splittingEhrenfest}
\begin{split}
& H_1(M, \mathbb{Q})  = E^{++} \oplus E^{+-} \oplus E^{-+} \oplus
E^{--}, \quad \text{where, for} \ s_0,s_1 \in \{+,-\} , \\
& E^{s_0 s_1}=\{\gamma\in H_1(M, \mathbb{Q}):\tau_*^v(\gamma) = s_0 \gamma\text{ and }\tau_*^h(\gamma)
= s_1 \gamma\}.\end{split}\end{equation}
Remark that (\ref{splittingEhrenfest}) defines an invariant
orthogonal splitting constant  on $\mathscr{L}$.

One can  check that the homology class $\sigma$ which determines
the $\mathbb{Z}$-cover $(\widetilde{M},\widetilde{\omega}_{a,b})$
of $({M},{\omega}_{a,b})$ belongs to the subspace $E^{-+}$ and
that the space $E^{-+}$ has dimension two (we refer for details to
\cite{DHL}, see Lemma 3 and Lemma 4). Moreover, the Lyapunov
exponents of the KZ cocycles for all the $SL(2,\R)$-invariant
ergodic measures on $\mathscr{L}$ were computed in \cite{DHL} (in
particular the exponents corresponding to $E^{-+}$) and turn out
to be all non-zero.

Given any parameter $(a,b)\in (0,1)^2$ let $\mu_{a,b}$ be the
canonical measure for a Veech surface (see \S\ref{Veech:sec}) if
$(a,b)$ satisfy the assumptions $(1)$ or $(2)$ or
$\mu_\mathscr{L}$ otherwise. Then, all the assumptions of Theorem
\ref{non-ergodicitycriterion} are satisfied by taking
$\mu:=\mu_{a,b}$ and $K_1:= E^{-+}$. It follows from
Corollary~\ref{cor_UMEC}  that there exists a set $\mathscr{L}'$
contained in the $SL(2,\R)$-orbit closure of $(M, \omega_{a,b})$
such that $\mu (\mathscr{L}')=1$ and for all $\omega \in
\mathscr{L}'$, for any   $\mathbb{Z}$-cover
$(\widetilde{M}_\gamma,\widetilde{\omega}_\gamma)$ with $\gamma\in
E^{-+}$ the vertical flow $(\widetilde{\varphi}^v_t)_{t \in \R}$
is \emph{not-}ergodic and it has uncountably many ergodic
components.

If $({M},{\omega}_{a,b})$ is a Veech surface, that is for $(a,b)$
as in $(1)$ or $(2)$, Proposition \ref{Fubinilattice} allows to
conclude the proof.  Therefore, from now on we consider the case
$\mu = \mu_\mathscr{L}$ and use a different Fubini argument to
prove the conclusion of the Theorem  for a full measure set of
parameters $(a,b)$. The arguments are similar to the proof of
Theorem \ref{stripbilliard} and also to the Fubini argument used
by \cite{DHL} in \S6.

Let us consider local coordinates
$(\underline{x},\underline{y})=(x_1, x_2,x_3, x_4,y_1, y_2,y_3,
y_4)$ on $\mathscr{L}$ given by period coordinates as follows
\[x_i=\int_{\gamma^i_{jk}}\Re \omega\text{ and }y_i = \int_{\gamma^i_{jk}}\Im \omega\text{ for }i=1,2,3,4\text{ and }j,k\in\{0,1\},\]
where $\gamma^1_{jk}=w_{jk}$, $\gamma^2_{jk}=u_{jk}$,
$\gamma^3_{jk}=h_{jk}$, $\gamma^4_{jk}=v_{jk}$ for $j,k\in\{0,1\}$
is a family of generators in $H_1(M,\Sigma,\Z)$. Since we are
considering  abelian differentials of \emph{unit area}, the
coordinates (\ref{coordinates}) are not all independent, but one
of them, say $y_4$, is determined by the area one requirement.
Thus, $(\underline{x}, \underline{y}):=(x_1, x_2, x_3, x_4, y_1,
y_2, y_3)$ are independent coordinates on a subset of
$\mathscr{L}$. Let $\omega(\underline{x}, \underline{y})$ be the
corresponding differential. Then
$\omega\big(\frac{1}{4(1-ab)}(a,0,1,0,0,b,0)\big)=\omega_{a,b}$
for every $(a,b)\in(0,1)^2$. Let as consider the local
diffeomorphism
$\Upsilon:(0,1)^2\times((0,2\pi)\setminus\{\pi/2,\pi,3\pi/2\})\times\R^4\to\R^7$,
\begin{align*}&\Upsilon(a,b,\theta,t,y_1,y_2,y_3) =\frac{1}{4(1-ab)} \cdot \\
&(e^t(a\sin\theta,
-b\cos\theta,\sin\theta,-\cos\theta),
e^{-t}(y_1+a\cos\theta,y_2+b\sin\theta,y_3+\cos\theta)).
\end{align*}
Then
$g_t\rho_{\pi/2-\theta}\omega_{a,b}=\omega(\Upsilon(a,b,\theta,t,0,0,0))$
and  the pullback of the measure $\mu_{\mathscr{L}}$ by the map
$(a,b,\theta,t,\underline{y})\mapsto\omega(\Upsilon(a,b,\theta,t,\underline{y}))$
is equivalent to the Lebesgue measure restricted to the domain of
the map.

As in the proof of Theorem \ref{stripbilliard}, let us say that a
flow has property (P-1) if it is not ergodic and (P-2) if it has
uncountably many ergodic components and let us denote by
$\neg\mathcal{P}_i \subset(0,1)^2\times(0,2\pi)$ the set of all
$(a,b,\theta)$ such that the directional flow
$(\widetilde{\varphi}^\theta_t)_{t\in\R}$ on
$(\widetilde{M}_\sigma,\widetilde{(\omega_{a,b})}_\sigma)$ does
not have property (P-i) for $i=1,2$. The same argument as in the
proof of Theorem~\ref{stripbilliard} shows that for every
$(a,b,\theta)\in \neg \mathcal{P}_i$ there exits neighbourhoods
$\mathcal{U}_1 \ni (a,b,\theta)$, $\mathcal{U}_2 \subset\R^4$ such
that for every $\omega\in \omega(\Upsilon((\neg
\mathcal{P}_i\cap\mathcal{U}_1)\times \mathcal{U}_2))$ the
vertical flow on
$(\widetilde{M}_\sigma,\widetilde{\omega}_\sigma)$ does not have
(P-i). Therefore the set $\omega(\Upsilon((\neg
\mathcal{P}_i\cap\mathcal{U}_1)\times
\mathcal{U}_2))\subset\mathscr{L}$ has zero $\mu_{\mathscr{L}}$
measure. It follows that $(\neg
\mathcal{P}_i\cap\mathcal{U}_1)\times \mathcal{U}_2$ and hence
$\neg \mathcal{P}_i\cap\mathcal{U}_1$ has zero Lebesgue measure.
Thus, for $i\in \{1,2\}$,  $\neg \mathcal{P}_i\subset
(0,1)^2\times(0,2\pi)$ has zero Lebesgue measure. Consequently,
for almost every $(a,b)\in(0,1)^2$ for almost every $\theta$ the
directional flow $(\widetilde{\varphi}^\theta_t)_{t\in\R}$ on
$(\widetilde{M}_\sigma,\widetilde{(\omega_{a,b})}_\sigma)$ is not
ergodic and has uncountably many ergodic components.
\end{proofof}

\begin{proofof}{Corollary}{Ehrenfestcor}
Let us remark that the billiard flow $(e_t^\theta)_{t\in \R}$ on
the planar Ehrenfest model  $E_2(a,b)$ projects on the the
billiard flow $(e_t^\theta)_{t\in \R}$ on the one-dimensional
Ehrenfest table  $E_1(a,b)$, via the map $\pi: \R^2 \to \mathbb{R}
\times \R/\Z$ given by $\pi(x,y) = (x, y +\Z)$. In other words,
$(e_t^\theta)_{t\in \R}$ on $E_1(a,b)$ is  a factor of
$(e_t^\theta)_{t\in \R}$ on $E_2(a,b)$.  It follows that if
$(e_t^\theta)_{t\in \R}$ on $E_1(a,b)$ is not ergodic and has
uncountably many ergodic components, also the flow
$(e_t^\theta)_{t\in \R}$ on   $E^2_{a,b}$ is not ergodic and has
uncountably many ergodic components. Thus,
Corollary~\ref{Ehrenfestcor} follows immediately from
Theorem~\ref{Ehrenfestthm}.
\end{proofof}

\appendix

\section{Stable space and coboundaries.}\label{coh:sec}
In this Appendix we include for completeness the proof  of Lemma
\ref{comparisontimes:lemma} and Theorem \ref{cohthm} (see
\S\ref{comparisontimes:lemma})  along the lines of \cite{Zo:how,
For-dev} (see also \cite{DHL}). Let us first introduce some
notation and describe how to construct a section $\mathcal{K}$ for the Teichm\"uller flow
which will be useful in both proofs. Some of the properties of
$\mathcal{K}$ will not be used in the proof of Lemma
\ref{comparisontimes:lemma}, but only in the proof of Theorem
\ref{cohthm}.

\subsubsection*{A section for the Teichm\"uller geodesic flow}
Let $\mu$ be any $SL(2,\R)$-invariant probability measure on the
moduli space  $\mathcal{M}^{(1)}(M)$ ergodic for the
Teichm\"uller flow. Since $\mu$  is $SL(2,\R)$-invariant, we can
assume that it is supported on a stratum $\mathcal{H}^{(1)}
=\mathcal{H}^{(1)} (k_1, \dots, k_\kappa) $ for some $k_1, \dots,
k_\kappa$. Let us remark that since $\mu$ is finite  and
ergodic for the Teichm\"uller flow,  by Oseledets' theorem,
$\mu$-almost every $\omega \in \mathcal{H}^{(1)}$ is Oseledets
regular for the Kontsevich-Zorich cocycle $(G^{KZ}_t)_{t\in\R}$.
Moreover,  there exists a $(G_t)_{t\in\R}$-invariant set
$\mathcal{H}_0\subset {\mathcal{H}}^{(1)}$ of $\mu$-measure one
such that each $\omega\in\mathcal{H}_0$ has no vertical and
horizontal saddle connections and  both the vertical and
horizontal flow on $(M, \omega)$ are ergodic (see \cite{Ma:IET}).

Choose a point $\omega_0\in \mathcal{H}_0 $ which is
Oseledets regular and  in the support of the measure $\mu$.
Consider the  vertical flow $(\varphi_t)_{t\in \R}$ on $(M,
\omega_0)$, where for brevity $\varphi_t:=
\varphi_t^{\omega_0,{v}}$. Let $M_{reg} =M_{reg,\omega_0}$ be the
set of points which are regular both for the vertical and
horizontal flow on $(M,\omega_0)$ (that, we recall, means that
both flows are defined for all times).  Remark that $M_{reg}$ has
full measure on $M$ and is invariant under $(G_t)_{t \in \R}$,
that is, $M_{reg, G_t\omega} = M_{reg} $ for all $t \in \R$.
Choose also a regular point $p_0 \in M_{reg}$.  The definition of
the section $\mathcal{K}$ depends on the choice of $\omega_0$ and
$p_0$, but $\omega_0$ and $p_0$ will play no role.

Let us denote by $I_{\omega_0}(p_0)$  the arc  of the horizontal
flow on $(M, \omega_0)$ of total length $1$ centered at  $p_0$.
For any $q \in I_{\omega_0}(p_0)$ let us denote by
$\tau({\omega_0},q)$ the \emph{first return time} of $q$ to
$I_{\omega_0}(p_0)$ under the vertical flow $\varphi_t$.
 The Poincar{\'e} map of the flow
$(\varphi_t)_{t\in \R}$ to $I_{\omega_0}=I_{\omega_0}(p_0)$ is an IET that we
will denote by $T=T_{\omega_0,p_0}: I_{\omega_0} \to I_{\omega_0}$.
Let us denote by $I_j= I_j({\omega_0})$, $j=1, \dots, m$, the
subintervals exchanged by $T$, by $\lambda_j=
\lambda_j({\omega_0})$ their lengths and by $\tau_j=
\tau_j({\omega_0})$ the first return time of any $q \in I_i$ to
$I_{\omega_0}$.  Remark that
since $I_{\omega_0}(p_0)$  does not contain any singularity and the set of
singularities is discrete, there exists a  maximal $\overline{\delta} =
\overline{\delta}({\omega_0},p_0)$ such that the strip
 \begin{equation*}
\bigcup_{0\leq t < \overline{\delta}({\omega_0},p_0)} \varphi_t I_{\omega_0}(p_0)
 \end{equation*}
does not contain any singularities, and thus is isometric to an
Euclidean rectangle of height $\overline{\delta}$ and width $1$ in
the flat coordinates given by ${\omega_0}$.

For any $p \in M_{reg}$, denote by
$\gamma_s =\gamma_s(p,\omega_0)$ the \emph{unparametrized} curve
given by the trajectory of  $(\varphi_t)_{t\in\R}$ of length $s$
starting at $p$. For each $j=1, \dots, m$, let
$\widetilde{\gamma}_j=\widetilde{\gamma}_j({\omega_0})\in
H_1(M,\Z)$ be the homology class obtained by considering the
vertical trajectory $\gamma_{\tau_j}(q,{\omega_0})$ of a point $q
\in I_j$ up to the first return time to $I_{\omega_0}$ and closing
it up with a horizontal geodesic segment  contained in
$I_{\omega_0}$. One can show that $\{
\widetilde{\gamma}_j=\widetilde{\gamma}_j({\omega_0})$, $1\leq j
\leq m\}$ generate the homology $H_1(M, \R)$ (the proof is
analogous to the proof of Lemma 2.17, \S 2.9 in \cite{ViB}). In
particular,  their Poincar{\'e} duals classes $\{
\mathcal{P}\widetilde{\gamma}_j, 1\leq j \leq m\} $
generate  $H^1(M, \R)$. Thus, it follows\footnote{This same remark
is used in \cite{Zo}, see Lemma 6.2.} that there exists a constant
$c>0$
such that
\begin{equation}\label{base}
 \frac{1}{c} \| \rho \|_{\omega_0}  \leq \max_{1\leq j\leq m} \left| \int_{\widetilde{\gamma}_j } \rho\right| =
\max_{1\leq j\leq m} |\langle \mathcal{P}\widetilde{\gamma}_j ,
\rho \rangle| \leq {c} \| \rho \|_{\omega_0} ,  \text{ for  all }
\rho \in H^1(M, \R).
\end{equation}
Since $\omega_0$ does not have neither vertical nor horizontal
saddle connections, for any $\omega \in \mathcal{H}^{(1)} (k_1,
\dots, k_\kappa) $ in a sufficiently small neighbourbood of
$\omega_0$ in the stratum the induced IET $T_\omega$ on
$I_\omega(p_0)$ has the same number $m$ of exchanged intervals and
the same combinatorial datum and furthermore the lengths
$\lambda_j(\omega)$, $j=1, \dots, m$ and the quantity
$\overline{\delta}(\omega, p_0)$ change continuously with $\omega$
and  the homology classes $\widetilde{\gamma}_j(\omega)$, $1\leq j
\leq m$ are locally constant. Therefore, by choosing $\mathcal{U}$
to be a small compact neighbourhood of $\omega_0$ in $
\mathcal{H}^{(1)} (k_1, \dots, k_\kappa) $,
$\widetilde{\gamma}_j(\omega)=\widetilde{\gamma}_j(\omega_0)$  for
any  $\omega \in \mathcal{U}$ and $1\leq j \leq m$ and  there
exists constants $A_\mathcal{U}>0$ and
$C_\mathcal{U}>1 $ 
such that for any $\omega \in \mathcal{U}$ and $1\leq j\leq m$ one
has
\begin{eqnarray}\label{balance}
 \lambda_j(\omega) \,
\overline{\delta}(\omega,p_0)  \geq A_\mathcal{U},
\qquad \text{ and }\qquad
\frac{1}{C_\mathcal{U}}\leq  \tau_j(\omega)\leq
C_\mathcal{U} .
\end{eqnarray}
Furthermore, since $\mathcal{U}$ is compact, there exists a
constant $K$ such that for any $\omega_1, \omega_2 \in
\mathcal{U}$, and any $\rho \in H^1(M, \R)$ the Hodge norms
satisfy $\| \rho\|_{\omega_1} \leq K \| \rho\|_{\omega_2}$ (it
follows for example from \cite{For-dev}, \S2). Thus, (\ref{base})
holds uniformly for $\omega \in \mathcal{U}$, that is, there
exists $c_\mathcal{U}>1$ such that
\begin{equation}\label{baseuniform}
 \frac{1}{c_\mathcal{U}} \| \rho \|_\omega   \leq
\max_{1\leq j\leq m} \left| \int_{\widetilde{\gamma}_j (\omega)} \rho \right| \leq
c_\mathcal{U}\| \rho \|_\omega
\qquad \text{for \ all} \ \omega \in \mathcal{U}, \ \rho \in
H^1(M, \R).
\end{equation}
Since $\omega_0$ belongs to  the support of $\mu$,
$\mu(\mathcal{U})>0$.  Let  $\mathcal{S} \subset \mathcal{H}^{(1)}$ be a
hypersurface containing $\omega_0$ transverse to $(G_t)_{t\in \R}$
and let $\mathcal{K}\subset \mathcal{S}\cap \mathcal{U}$ be a
\emph{compact} subset with positive transverse measure  such that
every $\omega \in \mathcal{K}$ is Birkhoff generic and Oseledets
regular.

\begin{proofof}{Lemma}{comparisontimes:lemma}
Let $\mathcal{K}$ be the section constructed above starting from
the measure $\mu$.  Since $(G_t)_{t\in \R}$ is ergodic and
$\mathcal{K}$ has positive transverse measure, there exists  a
full $\mu$-measure set $\mathcal{M}'\subset \mathcal{H}^{(1)}$ such that
for any $\omega \in \mathcal{M}'$ the forward geodesics $\{ G_t
\omega , t >0\}$ visits $\mathcal{K}$ infinitely often.  For
$\omega \in \mathcal{M}'$,  let   $t_0$ be the minimum $t\geq 0$
such that $G_t(\omega ) \in \mathcal{K}$ and let  $\{t_k\}_{k\in
\N}$ be the sequence of successive returns to $\mathcal{K}$. For
each $k \in \N$, referring to the notation introduced above, let
us  denote by $I^k: = I_{G_{t_k}\omega}(p_0)$ and by
$\widetilde{\gamma}^{(k)}_j\in H_1(M,\R)$ the homology class
$\widetilde{\gamma}_j (G_{t_k}\omega)$. As we already remarked,
the set  $\{ \widetilde{\gamma}^{(k)}_j, j=1, \dots, m\}$
generates $H_1(M, \mathbb{R})$ and by construction each
$\widetilde{\gamma}^{(k)}_j$ belongs to $H_1(M, \mathbb{Z})$. Let
us show that they  $\{ \widetilde{\gamma}^{(k)}_j, j=1, \dots,
m\}$ satisfies the conclusion of Lemma~\ref{comparisontimes:lemma}.
Since $\widetilde{\gamma}^{(k)}_j =
\widetilde{\gamma}_j (G_{t_k}\omega)$ and $G_{t_k}\omega \in
\mathcal{K}\subset \mathcal{U}$, it follows from  (\ref{baseuniform}) that
\begin{equation}\label{baseuniform1}
\frac{1}{c_\mathcal{U}} \| \rho \|_{G_{t_k}\omega}
\leq \max_{1\leq j\leq m} \left| \int_{\widetilde{\gamma}^{(k)}_j }
\rho\right| \leq {c_\mathcal{U}} \| \rho \|_{G_{t_k}\omega}\quad\text{ for every }
\rho \in H^1(M, \R),\end{equation}
which gives  (\ref{base0}) with $c:=c_\mathcal{U}$.
\end{proofof}

\begin{proofof}{Theorem}{cohthm}
Let $\mathcal{K}$ be the section  constructed at the beginning of
the Appendix. Let $\mathcal{M}' \subset \mathcal{H}^{(1)}$ be the
set of $\omega $ such that the forward geodesic $\{ G_t \omega , t
>0\}$ visits $\mathcal{K}$ infinitely often and the vertical
and the horizontal flow on $(M,\omega)$ are ergodic.   The set
$\mathcal{M}'$ has full $\mu$ measure since $\mu$ is ergodic and
$\mathcal{K}$ has positive transverse measure. Let us show that
$\mathcal{M}'$ satisfy the conclusion of the theorem.  Let us
remark first that, since Oseledets regular points are flow
invariant, any $\omega \in \mathcal{M}'$ is Oseledets regular by
the definition of $\mathcal{K}$.

Fix $\omega \in \mathcal{M}'$ and let $t_0$ be the minimum $t\geq
0$ such that $G_t(\omega ) \in \mathcal{K}$ and let $\{t_k\}_{k\in
\N}$ be the sequence of successive returns to $\mathcal{K}$.  Let
$\rho$ be a closed smooth form such that $[\rho]\in
E_\omega^-(M,\R)$. Let  $(\varphi_t)_{t\in \R}$ be the vertical
flow on $(M, \omega)$ and consider the function $f= i_{X_{v}} \rho
$. We want to show that the associated cocycle $F_f^v$ is a
coboundary for  $(\varphi_t)_{t\in \R}$.

For any $p \in M_{reg}$ let $\gamma_t(p,\omega)$ be the
unparametrized curve underlying a trajectory of length $t$ for the
vertical flow on $(M,\omega)$, so that
\begin{equation}\label{ergint1}
\int_0^t f(\varphi_s p) \,d s =  \int_{\gamma_t(p,\omega)} \rho.
\end{equation}
We will show that for every $p \in M_{reg}$ the ergodic integrals (\ref{ergint1})  are
bounded uniformly in $t\geq 0 $ (and hence
deduce that $F^v_f$  is a coboundary).  We will do so (as in
\cite{For-dev}) by decomposing the integral (\ref{ergint1}) along
a special sequence of  times, given in our proof by returns to
$\mathcal{K}$.

Referring to the
notation introduced in the construction of the section
$\mathcal{K}$, let $p_0 \in M_{reg}$ be the point chosen in the definition of $\mathcal{K}$ and for each  visit time $t_k$, let us denote by
\begin{equation}\label{notation}
I^k: = I_{G_{t_k}\omega}(p_0),   \, I^k_j = I_j(G_{t_k}\omega),
\, \widetilde{\gamma}^{k}_j := \widetilde{\gamma}_j (G_{t_k}\omega), \, \tau^{k}_j := \tau_j (G_{t_k}\omega).
\end{equation}
Since $(G_t)_{t \in \R}$ preserves horizontal leaves and for each
$k\geq 0$ we have $I^{k+1}=I_{G_{t_{k+1}} \omega} (p_0) \subset
I_{G_{t_{k}} \omega} (p_0)=I^k$. Let us remark that we can replace
$\rho$  by any form cohomologous to $\rho$. This follows since if
$\rho\in\Omega^1(M)$ is exact then $\rho=dh$ for some smooth
function $h:M\to\R$ and
$f=i_{X_{v}}\rho=i_{X_{v}}dh=\mathcal{L}_{X_{v}}h={X_{v}}h$, so
$F^{{v}}_f$ is a coboundary.  Let us then replace the form $\rho$
by any form cohomologous to $\rho$ vanishing on a neighborhood of
$I^0$. With the customary abuse of notation, the same symbol
$\rho$ will be used for the new form.

For each $k \in N$ and each $q \in I_j^k$, let
\begin{equation*}
\tau^k(\omega,q): =  \tau_j(G_{t_k} \omega)e^{t_k}=
\tau^{k}_je^{t_k}, \qquad \gamma^k_\omega(q):=
\gamma_{\tau^k(\omega,q)}(q, \omega).
\end{equation*}
The unparametrized curve $\gamma^k_\omega(q)$ will be  called a
$k^{th}$-\emph{principal return trajectory}.   Remark that
$\gamma^k_\omega(q) $, which is the support of a trajectory of
length $\tau^k(\omega,q)= \tau^{k}_je^{t_k}$  for the vertical
flow given by $\omega$, is the same unparametrized curve than the
support of a a trajectory of length $\tau^{k}_j$ for the vertical
flow given by $G_{t_k}(\omega)$.

Fix a regular point $p \in M_{reg}$.  For any $t \in \R$,
trajectory $\gamma_t:=\gamma_t(p, \omega)$ can be inductively
decomposed into principal return trajectories as  follows
(analogously to  Lemma 9.4 in \cite{For-dev}). Let $K \in \N$  be
the maximum $k \in \N$ such that $int(\gamma_t)  \cap I^k$ has at
least two elements (where $int(\gamma)$ denotes the curve $\gamma$
without its endpoints). Let $p^{K}_0 $, $p^{K}_1 , \dots,
p_{m_{K}}^K$  be all the points in the interior of $\gamma_t$
belonging to $I^{K}$, indexed in increasing order of $t$. Then if
$\alpha_{K}$ is the initial part of the trajectory $\gamma_t$ from
$p$ to $p^{K}_0$ and $\beta_{K}$ is the final part of $\gamma_t$
from  $p_{m_K}^K$ to the final point of $\gamma_t$, one can
decompose $\gamma_t$ as
\[ \gamma_t = \alpha_{K} \cup
\bigcup_{i=0}^{m_{K}-1} \gamma^{K}_\omega(p^{K}_i)\cup \beta_{K},
\]
where all $\gamma^{K}_\omega(p^{K}_i)$ are $K^{th}$ principal
return trajectories. Moreover,  denoting by $l_{\nu}(\cdot )$ the
length of an arc with respect to $\nu \in \mathcal{M}(M)$, the
reminder curves, by construction,  satisfy $l_{G_{t_K}\omega}(
\alpha_{K}), l_{G_{t_K}\omega}( \beta_{K}) \leq \max_j \tau_j^K$
or, equivalently, $l_{\omega}( \alpha_{K}), l_{\omega}( \beta_{K})
\leq e^{t_K}\max_j \tau_j^K$. Let us estimate the number $m_K$ of
$K^{th}$-principal returns. By definition of $K$,
$int(\gamma_t)\cap I_{K+1}$ has at most one element hence
$l_\omega(\gamma_t)\leq 2e^{t_{K+1}} \max_{j} \tau_j^{K+1}  $.
Since any $K^{th}$-principal return $\gamma_\omega^K(q)$ satisfy
$l_\omega(\gamma^K_\omega(q)) \geq e^{t_K} \min_{j} \tau_j^K $,
using (\ref{balance}) we get $m_{K}\leq 2C^2_\mathcal{U}
e^{t_{K+1}-t_K}$.

Let $k:= K-1$. To decompose $ \alpha_{k+1}$ and $\beta_{k+1}$ in
$k^{th}$ principal return trajectories, let by convention
$p^{k}_0$ be the initial point of $\beta_{k+1}$ and let $p^{k}_1
$, $p^{k}_2 , \dots, p_{m_{k}}^{k}$ denote all points of
$int(\alpha_{k+1}) \cup int( \beta_{k+1}) \cap I^{k}$, indexed in
increasing order of $t$.   Then, we have
\[ \alpha_{k+1} \cup \beta_{k+1} = \alpha_{k} \cup  \bigcup_{i=0}^{m_{k}-1}
\gamma^{k}_\omega(p^{k}_i)\cup   \beta_{k}, \] where $\alpha_{k}$
is the initial part of $\alpha_{k+1}$ from $p$ to $p^{k}_1 $ and
$\beta_{k}$ is the final part of $\beta_{k+1}$ from
$p_{m_{k}}^{k}$ to the endpoint of $\beta_{k+1}$. To estimate
$m_{k}$, reasoning similarly to  the estimate of $m_K$ and using
the upper bound on the length of $\alpha_{k+1}$ and $\beta_{k+1}$
and the lower bound given by (\ref{balance}) on the length of
$k^{th}$ principal return trajectories  together with
(\ref{balance}), we get
\[m_{k}  \leq 2 \, \frac{ e^{t_{k+1}}  \max_{1\leq j \leq m}
\tau_j^{k+1}}{ e^{t_{k}} \min_{1\leq j\leq m} \tau_j^{k} } \leq 2
\, C^2_\mathcal{U}\,  {e^{t_{k+1} -t_{k}}}. \] Moreover,
$l_{\omega}( \alpha_{k}), l_{\omega}( \beta_{k}) \leq
e^{t_k}\max_j \tau_j^k$. Repeating for the same construction
described for $k=K-1$ for $k=K-2, \dots, 1,0$, we get the
decomposition
\begin{equation}\label{decomp}
\gamma_t = \alpha_0 \cup \bigcup_{k=0}^{K} \bigcup_{j=0}^{m_k-1} \gamma_\omega^{k} (p^k_j) \cup \beta_0,
\end{equation}
where for each $0\leq k\leq K$ and  $0\leq j\leq  m_k$ the
trajectory $\gamma_\omega^{k} (p^k_j)$ is a $k^{th}$ principal
return,
\begin{equation}\label{m_kupperbound}
m_k \leq 2C^2_\mathcal{U} e^{(t_{k+1}-t_k)} \quad\text{ and }\quad
l_\omega(\alpha_0), l_\omega(\beta_0) \leq e^{t_0 } \max_j
\tau^0_j .
\end{equation}
Thus, recalling (\ref{ergint1}),
\begin{equation}\label{decompintegrals}
\int_0^t f(\varphi_s p) d s 
= \sum_{k=0}^{K} \sum_{j=0}^{m_k-1} \int_{\gamma_\omega^{k}  (p^k_j)}
\rho + \int_{\beta_0}  \rho + \int_{\alpha_0}  \rho.
\end{equation}
By construction, all points $(p^k_j)_{j,k}$ belong to $ I_\omega
(p_0)$.
For each $0\leq k \leq K$ and $1 \leq j\leq m_k$, let
$\widetilde{\gamma}_\omega^{k} (p^k_j)$ stand for the homology
class of the closed curve obtained by closing up the trajectory
${\gamma}_\omega^{k} (p^k_j)$ by the shortest geodesic connecting
its final point with its initial point. Remark that since both
initial and final points of  ${\gamma}_\omega^{k} (p^k_j)$ are
contained in $ I_{G_{t_k}\omega(p)}=I^k$,
$\widetilde{\gamma}_\omega^{k} (p^k_j)$ consists of
${\gamma}_\omega^{k} (p^k_j)$ together with a horizontal segment
contained in $I^k$.

Remark now that  if $p^k_j \in I^k_l$,  the
closed curve $\widetilde{\gamma}^k_\omega(p^k_j)$ is a
representative of the homology class
$\widetilde{\gamma}_l(G_{t_k}\omega)\in H_1(M,\Z)$ defined in the
construction of $\mathcal{K}$ at the beginning of the Appendix.
Since $\rho$ vanishes on $I^k \subset I^0$ and  and
$\widetilde{\gamma}_\omega^{k} (p^k_j)$ and ${\gamma}_\omega^{k}
(p^k_j)$ differ by a horizontal segment contained in $I^k \subset
I^0$,   in view of  \eqref{baseuniform}, we get that
\begin{equation*}
\left| \int_{{\gamma}_\omega^{k} (p^k_j)} \rho  \right| = \left|
\int_{\widetilde{\gamma}_\omega^{k} (p^k_j)}  \rho  \right|=
\left| \int_{\widetilde{\gamma}_l(G_{t_k}\omega)}  \rho
\right|\leq c_{\mathcal{U}} \|\rho \|_{G_{t_k}\omega}.
\end{equation*}
Since,  by assumption,  $[\rho] \in E_\omega^-(M,\R)$ (recall
\eqref{stabledef}), it follows that there exists constants $C_1,
\theta >0$ such that
\begin{equation*}
\left| \int_{{\gamma}_\omega^{k} (p^k_j)} \rho  \right| \leq 
C_1 e^{-\theta t_k}\text{ for all }k\geq 0.
\end{equation*}
Using this inequality together with (\ref{m_kupperbound})  to
estimate (\ref{decompintegrals}), we get that there exists $C_2>0$
such that for any  $t\geq 0$, one
has
\begin{equation}\label{decompintegrals1}
\left| \int_0^t f(\varphi_s p) d s \right| \leq  C_2
\sum_{k=0}^\infty e^{( t_{k+1}-t_k)}  e^{-\theta t_k} +C_2 = C_2
\sum_{k=0}^\infty  e^{ \left( \frac{ t_{k+1}-t_k}{t_k}  - \theta
\right)t_k } + C_2 .
\end{equation}
Since $\mathcal{K}$ has positive  transverse measure and $\omega$
is Birkhoff generic (since Birkhoff generic points are
$(G_t)_{t\in \R}$-invariant and $G_{t_1}\omega \in \mathcal{K}$
which by construction consists only of Birkhoff generic points),
by Birkhoff ergodic theorem  we have  $\lim_{k \to \infty} t_k /k
= 1/\mu_{tr}(\mathcal{K})$, where $\mu_{tr}(\mathcal{K})>0$ is the
transverse measure of $\mathcal{K}$. Thus, if $k$ is sufficiently
large, $(t_{k+1}-t_k)/t_k - \theta  \leq - \theta/2$, which shows
that the above series is convergent and the ergodic integrals in
(\ref{decompintegrals1}) are uniformly bounded for all  $t\geq 0$.
By Remark \ref{coboundarycondition}  this implies that $F^{{v}}_f$
is a coboundary.

\smallskip
Let us now prove the second part of Theorem \ref{cohthm}.  Let us
assume in addition from now on that $\mu$ is KZ-hyperbolic. Let
$\omega \in \mathcal{M}'$, $p \in M_{reg}$ and let $\rho \in
\Omega^1(M)$ be a smooth closed one form such that $[\rho]\notin
E_\omega^-(M,\R)$.
For each $k\in \N$ and $j=1, \dots, m$, using the notation
introduced in (\ref{notation}) and setting $\overline{\delta}^{k}:
=  \overline{\delta}(G_{t_k} \omega,p_0)$ (where
$\overline{\delta}$ was also defined during the construction of
$\mathcal{K}$), define the set $R^{k}_j$ by
\begin{equation*}
R^k_j := \bigcup_{0\leq s < \overline{\delta}^k }
{\varphi^{G_{t_k} \omega,v}_s}\left( I^k_j\right)= \bigcup_{0\leq
s < e^{t_k}\overline{\delta}^k } {\varphi^{\omega,v}_s}
\left( I^k_j\right). 
\end{equation*}
Thus each $R^{k}_j$ is a rectangle in translation surface
coordinates given by $G_{t_k}\omega$ (and by $\omega$) with base
$I^k_j\subset I^k \subset I_{\omega}(p_0)$ and height
$\overline{\delta}^k$ with respect to $G_{t_k}\omega$ (or $e^{t_k}
\overline{\delta}^k$ with respect to $\omega$). Since
$G_{t_k}\omega \in \mathcal{U}$, by (\ref{balance}) the area of
each rectangle (which is invariant under $(G_t)_{t\in \R}$) is
uniformly bounded from below, that is
\begin{equation}\label{areas}
\nu_\omega (R^k_j) = \nu_{G_{t_k}\omega} (R^k_j) = \lambda_j(
G_{t_k}\omega) \overline{\delta}(G_{t_k }\omega,p_0 ) \geq
A_\mathcal{U} >0
\end{equation}
for all $j=1, \dots, m$ and $k \in \N$.

Assume that $q\in I^k_j$ for some $j=1, \dots, m$ and $k \in \N$.
By definition,  $e^{t_k}\tau^k_j$ is the (vertical) length of
$\gamma^k_\omega(q)$ with respect to $\omega$ and
$\widetilde{\gamma}^k_j\in H_1(M,Z)$ is the homology class of the
curve consisting of $\gamma^k_\omega(q)$ closed by a horizontal
segment $J^k(q)$ contained in $I^k = I_{G_{t_k}\omega}(p_0)$, which
has horizontal length one in the flat metric given by
$G_{t_k}\omega$ Thus, for $f=i_{X_{v}} \rho$ we have
\begin{align}\label{difference1}
\begin{aligned}
\Big| \int_0^{e^{t_k}\tau_j^k} & f(\varphi_s q) \,d s -
\int_{\widetilde{\gamma}_j^k} \rho \Big|=
\Big|\int_{\gamma^k_\omega(q)} \rho     -
\int_{\widetilde{\gamma}_j^k}  \rho \Big|  = \Big|\int_{J^k(q)}
\rho \Big|\\
&\leq \int_0^{l_\omega(I^k)} |i_{X_{h}} \rho(\varphi^h_s
q)|\,ds\leq
l_\omega(I^k)\|i_{X_{h}} \rho \|_\infty =e^{-t_k}\|i_{X_{h}} \rho
\|_\infty.
\end{aligned}
\end{align} Let us now show that for any $p \in R^k_j$, setting
$c_\rho:= 3 \vert \vert i_{X_{h}} \rho \vert \vert_\infty$, we
have
\begin{equation}\label{samevalue}
\Big| \int_{\widetilde{\gamma}_j^k} \rho  -
\int_0^{e^{t_k}\tau_j^k} f(\varphi_s p) d s \Big| \leq c_\rho.
\end{equation}
Given $p\in R^k_j$,  since the height of $R^k_j$ in the
translation structure given by $\omega$ is $e^{t_k}
\overline{\delta}^k$, we can write $p = \varphi_{u} q$ for some
$0\leq u\leq e^{t_k} \overline{\delta}^k$ and $q \in I^k_j$. Thus,
since $\varphi_{e^{t_k}\tau_j^k-u}( p) =\varphi_{e^{t_k}\tau_j^k}
(q) = T_k (q)$, where $T_k:=T_{G_{t_k} \omega,p_0}$ is the first
return map of $(\varphi_t)_{t\in \R}$ to $I^k=I_{G_{t_k}
\omega}(p_0)$, we can write
\begin{equation}\label{equal}
\int_0^{e^{t_k}\tau_j^k}  f(\varphi_s q) \,d s  -
\int_0^{e^{t_k}\tau_j^k} f(\varphi_s p) \,d s = \int_0^{u}
f(\varphi_s q) \,d s - \int_{0}^{u} f(\varphi_s T_k (q) ) \,d s.
\end{equation}
Remark now that  $q, T_k(q),  \varphi_{u} q , \varphi_{u} T_k(q)$
are corners of a rectangle $R$ (since they are contained in the
rectangle of base $I^k$ and height $e^{t_k } \
\overline{\delta}^k$ in the translation structure given by
$\omega$). Denote by $\partial_vR$ and $\partial_hR$ the vertical
and the horizontal part of the boundary of $R$ respectively . Then
$\int_{\partial_vR}\rho$ is equal to the RHS of \eqref{equal} and
$\int_{\partial_hR}\rho$ is bounded by $2\| i_{X_{h}}\rho \|_{\infty}$.
Thus, since $\int_R d\rho=0$ ($\rho$ is closed and $R$ is simply
connected), by Stoke's theorem, $0=\int_{\partial
R}\rho=\int_{\partial_v R}\rho+\int_{\partial_h R}\rho$. It
follows  that
 \begin{equation*}
\Big|  \int_0^{u}   f(\varphi_s q) d s - \int_{0}^{u} f(\varphi_s
T_k(q) ) d s  \Big|  = \Big| \int_{\partial_v R} \rho \Big| =
\Big| \int_{\partial_h R}  \rho \Big| \leq 2\| i_{X_{v}\rho }
\|_{\infty}.
\end{equation*}
This, combined with  (\ref{equal}) and (\ref{difference1}), yields
(\ref{samevalue}).

Let us now prove that  the cocycle $F^v_f$ is not  a coboundary.
Assume by contradiction that  $F^v_f$ is a coboundary with a
measurable transfer function $u: M \to \R$. Then there exists a
constant $M_\mathcal{U}$, depending on $A_\mathcal{U}$, such that
the  set
\[\Lambda(M_{\mathcal{U}}) :=
\{ p \in M :|u(p)|\geq M_\mathcal{U}\}\quad\text{ satisfies }\quad
\nu_{\omega}(\Lambda(M_{\mathcal{U}}))\leq A_{\mathcal{U}}/2.\]
Thus, for any fixed $1\leq j \leq m$, for all $p$ in a set of
$\nu_\omega$-measure greater than $1-A_\mathcal{U}$ (more
precisely, for all $p \notin \Lambda(M_{\mathcal{U}}) \cup
\varphi_{-e^{t_k}\tau_j^k}(\Lambda(M_{\mathcal{U}}))$), we have
\begin{equation}\label{bounded}
\vert F^{v}_f ( e^{t_k}\tau_j^k,p)  \vert = \Big\vert
\int_0^{e^{t_k}\tau_j^k} f(\varphi^{v}_s p)\,ds \Big\vert =  \vert
u(\varphi_{e^{t_k}\tau_j^k} p) - u(p) \vert \leq 2
M_{\mathcal{U}}.
\end{equation}
Since   $\nu_\omega (R_j^k) \geq A_\mathcal{U}$  (see
\eqref{areas}), there exists $p_j \in R_j^k$ satisfying
(\ref{bounded}). Repeating the same argument for each $1\leq j
\leq m$ and recalling (\ref{baseuniform1}), which holds since
$G_{t_k}\omega \in \mathcal{U}$, and (\ref{samevalue})  we get
\[
\frac{1}{{c_\mathcal{U}}}\|\rho \|_{G_{t_k} \omega} \leq
\max_{1\leq j\leq m}\Big| \int_{\widetilde{\gamma}_j^k}
\rho\Big|\\ \leq \max_{1\leq j\leq m}\Big|
\int_0^{e^{t_k}\tau_j^k}  f(\varphi_s p_j) \,d s\Big| +c_\rho \leq
{2M_\mathcal{U}+ c_\rho}.
\]
Thus, $\liminf_{t \to +\infty }\vert \vert \rho \vert \vert _{G_{t}
\omega}<\infty$. Since  $\mu$ is KZ-hyperbolic, recalling the
definition of the stable space (\ref{stabledef}), this implies
that $ [\rho]\in E_\omega^-(M,\R)$, contrary to the assumptions.
Thus, we conclude that $F^{v}_f$ cannot be a coboundary.
\end{proofof}

\section*{Acknowledgments}
We  would like to  thank Vincent Delacroix, Giovanni Forni   and
Pascal Hubert for useful discussions and suggestions that helped
us  improve the paper and J.-P. Conze, P. Hooper, M. Lema\'nczyk,
C. Matheus and B. Weiss for useful discussions.   The second
author is currently supported by an RCUK Academic Fellowship and
an EPSRC First Grant, whose support is fully acknowledged.



\end{document}